\documentclass{amsart}

\usepackage{amsmath,amsthm,amsfonts}
\usepackage{latexsym}
\usepackage{amssymb}
\usepackage{pstricks}

\newtheorem{thm}{Theorem}[section]
\newtheorem{lemma}[thm]{Lemma}
\newtheorem{cor}[thm]{Corollary}
\newtheorem{prop}[thm]{Proposition}

\def\C{{\mathbb C}}
\def\del{\delta}
\def\sign{{\rm sign}}
\def\less{\lesssim}
\def\la{\langle}
\def\ra{\rangle}
\def\ad{{\rm ad}}
\def\Re{{\,\rm Re\,}}
\def\Im{{\,\rm Im\,}}

\def\Ai{{\rm Ai}}
\def\Bi{{\rm Bi}}
\def\eps{{\varepsilon}}
\def\les{\lesssim}

\def\R{{\mathbb R}}
\def\nn{\nonumber}
\def\calH{\mathcal H}
\def\del{\partial}
\def\rem{{\tt rem}}
\def\prefac{\frac{i}{\hbar}}

\def\snabla{ {\slash\!\!\! \nabla} }
\def\xphot{x_{{\rm max}}}

\def\vphi{\varphi}
\def\lan{\langle}
\def\ran{\rangle}
\def\supp{\mathrm{supp}}

\numberwithin{equation}{section}

\begin{document}

\title[Decay of linear waves on Schwarzschild]{On pointwise decay of linear waves on a Schwarzschild black hole background}

\author{Roland Donninger}
\address{University of Chicago, Department of Mathematics,
5734 South University Avenue, Chicago, IL 60637, U.S.A.}
\email{roland.donninger@epfl.ch}
\thanks{The first author
is an Erwin Schr\"odinger Fellow of the
FWF (Austrian Science Fund) Project No. J2843 and
he wants to thank Peter C. Aichelburg for his support and Piotr Bizo\'n for helpful discussions.}

\author{Wilhelm Schlag}
\address{University of Chicago, Department of Mathematics,
5734 South University Avenue, Chicago, IL 60637, U.S.A.}
\email{schlag@math.uchicago.edu}
\thanks{The second author was partly supported by the National
Science Foundation DMS-0617854 and a Guggenheim fellowship.}

\author{Avy Soffer}
\address{Rutgers University, Department of Mathematics, 110 Freylinghuysen Road, Piscataway, NJ 08854, U.S.A.}
\email{soffer@math.rutgers.edu}
\thanks{The third author wants to thank A. Ori and T. Damour for helpful
discussions, the
IHES France for the invitation and the NSF DMS-0903651 for partial support.}

\begin{abstract}
We prove sharp pointwise $t^{-3}$ decay for scalar linear perturbations of a Schwarzschild black hole without
symmetry assumptions on the data. We also consider electromagnetic and gravitational perturbations for which we obtain decay rates $t^{-4}$, and $t^{-6}$, respectively. We proceed by
decomposition into angular momentum~$\ell$ and summation of the  decay estimates on the Regge-Wheeler equation for fixed~$\ell$. We encounter a  dichotomy: the decay
law in time is entirely determined by the asymptotic behavior of the Regge-Wheeler potential in the far field, whereas the growth of the
constants in~$\ell$ is dictated by the behavior of the Regge-Wheeler potential in a small neighborhood around its maximum. 
In other words, the tails are controlled by small energies, whereas the number of angular derivatives needed on the data is determined by 
energies close to the top of the Regge-Wheeler potential.  This dichotomy corresponds to the well-known principle that for  initial times the decay  reflects the presence of
complex resonances generated by the potential maximum, whereas for later times the tails are determined by the far field. However, we do not invoke complex resonances at all, but rely
instead on semiclassical Sigal-Soffer type propagation estimates based on a Mourre bound near the top energy. 
\end{abstract}

\maketitle

\section{Introduction}\label{sec:Prelim}

The study of linear waves on fixed black hole backgrounds has a long history in
mathematical relativity and very recently, major progress has been made on various aspects of the 
problem, see, e.g., \cite{Tataru09}, \cite{DSS}, \cite{Luk10}, \cite{Luk10b}, \cite{DafRod08},
\cite{DafRod08}, \cite{DafRod10}, \cite{AB09}, \cite{MTT11}, \cite{MMTT10},
\cite{T09}, \cite{TT11}, \cite{FKSY06}
to name just a few of the more recent contributions.
We refer the reader to the excellent lecture notes by Dafermos and Rodnianski 
\cite{DafRod3} 
for the necessary background and a more detailed list of references.
Understanding the behavior of linear waves on fixed backgrounds is supposed to be
a necessary prerequisite for the study of the stability of black hole spacetimes in full 
general relativity, one of the major open problems in the field.
The goal of this paper is to prove point-wise in time decay estimates
for linear waves on the background of a Schwarzschild black hole. To be precise, let 
$$g=-\left (1-\frac{2M}{r}\right )dt^2+\left (1-\frac{2M}{r}\right )^{-1}dr^2+r^2(d\theta^2
+\sin^2 \theta\; 
d\varphi^2)$$ be
the Schwarzschild metric on $(t,r,\theta, \varphi)\in 
\R\times (2M,\infty)\times (0,\pi) \times (0,2\pi)$. Introducing the tortoise
coordinate
\[
 x=r+2M\log(\frac{r}{2M}-1)
\]
reduces the wave equation $\Box_g \psi=0$ to the form
\begin{equation}
 \label{eq:wave}
-\del_t^2\psi +\del_x^2 \psi -\frac{F}{r}\frac{dF}{dr}\psi + \frac{F}{r^2}\Delta_{S^2}\psi=0
\end{equation}
where $F=\frac{dr}{dx}$.
Our main result is as follows:

\begin{thm}
 \label{thm:main}
The following decay estimates hold for solutions $\psi$ of~\eqref{eq:wave} with data $\psi[0]=(\psi_0,\psi_1)$:
\begin{align}
 \| \la x\ra^{-\frac92-}  \psi(t) \|_{L^2} &\les \la t\ra^{-3} \| \la x\ra^{\frac92+} (\snabla^5 \del_x\psi_0, \snabla^5 \psi_0, \snabla^4\psi_1) \|_{L^2} \label{eq:decaywaveL2}  \\
\| \la x\ra^{-4}  \psi(t) \|_{L^\infty} &\les \la t\ra^{-3} \| \la x\ra^4 (\snabla^{10} \del_x\psi_0, \snabla^{10} \psi_0, \snabla^{9} \psi_1) \|_{L^1} \label{eq:decaywaveL1}
\end{align}
where $\snabla$ stands for the angular derivatives\footnote{The notation $a\pm$ stands for $a\pm\eps$ where $\eps>0$ is arbitrary (the choice determines the constants involved). Also, instead
of $(\snabla^{10},\snabla^9)$ in~\eqref{eq:decaywaveL1} one needs less, namely $(\snabla^{\sigma+1},\snabla^{\sigma})$ where $\sigma>8$ is arbitrary, see the proof in Section~\ref{sec:proof}
for details.}. Here $L^p:=L^p_x(\R;L^p(S^2))$
and $\langle x \rangle:=(1+|x|^2)^{1/2}$. 
\end{thm}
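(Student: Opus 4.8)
The plan is to reduce the PDE problem to a family of one-dimensional problems via angular momentum decomposition, then prove and sum up decay estimates for each fixed mode. Expanding $\psi(t,x,\omega) = \sum_{\ell \geq 0} \sum_{|m|\le\ell} \psi_{\ell m}(t,x) Y_{\ell m}(\omega)$ reduces \eqref{eq:wave} to the Regge-Wheeler equation $-\partial_t^2 \psi_{\ell m} + \partial_x^2 \psi_{\ell m} - V_\ell(x)\psi_{\ell m} = 0$, where $V_\ell(x) = F\big(\frac{\ell(\ell+1)}{r^2} + \frac{1}{r}\frac{dF}{dr}\big)$ with $F = 1 - 2M/r$. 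I would first establish the qualitative features of $V_\ell$ that drive everything: it is a positive potential, exponentially decaying as $x \to -\infty$ (the horizon), behaving like $\ell(\ell+1)/x^2$ as $x \to +\infty$, with a single non-degenerate maximum near the photon sphere $r = 3M$, of height comparable to $\ell^2/M^2$. The key point emphasized in the abstract is the dichotomy: the $t^{-3}$ rate for each $\ell$ comes solely from the $x^{-2}$ tail at spatial infinity, while the $\ell$-dependence of the constants comes from a neighborhood of the potential maximum.

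The core analytic input is a decay estimate for the fixed-$\ell$ evolution. I would prove, for the propagator $\frac{\sin(t\sqrt{H_\ell})}{\sqrt{H_\ell}}$ and $\cos(t\sqrt{H_\ell})$ with $H_\ell = -\partial_x^2 + V_\ell$, a weighted bound of the form $\|\langle x\rangle^{-\alpha} e^{it\sqrt{H_\ell}}(\cdots)\|_{L^2_x} \lesssim c(\ell) \langle t\rangle^{-3} \|\langle x\rangle^{\alpha}(\cdots)\|_{L^2_x}$ with $\alpha = \tfrac92+$, where $c(\ell)$ grows polynomially in $\ell$. This is done by splitting the spectrum: a low-energy piece near $\lambda = 0$ and a piece away from zero. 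For the low-energy part one uses the asymptotic structure of the Jost solutions / distorted Fourier transform for the $\ell(\ell+1)/x^2$-type potential; the threshold behavior of the spectral measure at $\lambda = 0$ governs the $t^{-3}$ tail (cf. the analysis in \cite{DSS}), and the constants here depend only on small energies, hence only mildly on $\ell$. For the part of the spectrum bounded away from zero one cannot afford to track resonances at the barrier top; instead, following the abstract's stated strategy, one invokes a Mourre estimate near the top energy $E_{\max} \sim \ell^2/M^2$ — that is, a commutator lower bound $\mathbf{1}_I(H_\ell)\, i[H_\ell, A]\, \mathbf{1}_I(H_\ell) \gtrsim \mathbf{1}_I(H_\ell)$ with $A$ a suitable semiclassical dilation-type generator, on an energy window $I$ around the maximum — and converts it via Sigal-Soffer propagation estimates into time decay, with the semiclassical parameter $\hbar \sim 1/\ell$ producing the polynomial-in-$\ell$ constants and accounting for the number of angular derivatives lost. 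The complementary intermediate energy range is handled by ordinary non-stationary phase / local smoothing since $V_\ell$ is smooth and there are no obstructions there.

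With the fixed-$\ell$ estimate in hand, the remaining step is the summation over $\ell$. Each angular derivative $\snabla$ applied to the data supplies a factor $\ell$ (since $\snabla^2 \sim -\Delta_{S^2}$ acts as $\ell(\ell+1)$ on the $\ell$-th mode), so $\snabla^5$ on $L^2$-type data gives enough powers of $\ell$ to beat the polynomial growth $c(\ell)$ coming from the Mourre/semiclassical constants, and one sums $\sum_\ell c(\ell)\ell^{-5}\|\cdots\|$ by Cauchy-Schwarz; the orthogonality of spherical harmonics turns the sum of squared norms back into an $L^2(S^2)$ norm, yielding \eqref{eq:decaywaveL2}. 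The $L^\infty$ estimate \eqref{eq:decaywaveL1} follows from the $L^2$ one (in $x$) combined with a one-dimensional Sobolev embedding $L^\infty_x \hookrightarrow$ (derivative in $x$ plus $L^2_x$) — which is why one sees $\partial_x\psi_0$ in the norms and loses an $x$-derivative — together with the $L^1 \to L^\infty$ nature of the pointwise estimate on $S^2$ after using more angular regularity to control the spherical harmonic expansion pointwise; this is the source of the roughly doubled angular derivative count $\snabla^{10}$ versus $\snabla^5$, and the footnote's remark that $\sigma > 8$ suffices reflects optimizing the Cauchy-Schwarz exponent in the $\ell$-sum.

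The main obstacle I anticipate is the fixed-$\ell$ estimate at energies near the top of the Regge-Wheeler barrier: getting a Mourre bound that is \emph{uniform} in the semiclassical parameter on a window around $E_{\max}(\ell)$, with explicitly tracked $\ell$-dependence of all constants, and then running the Sigal-Soffer minimal/maximal velocity and phase-space propagation estimates quantitatively enough to extract a clean $\langle t\rangle^{-3}$ with a polynomial constant — rather than the exponential decay in $t$ that resonance counting would naively suggest near a barrier top. Reconciling the barrier-top contribution (which should really only affect the number of derivatives, not the rate) with the far-field contribution (which gives the rate) is precisely the technical heart of the argument, and ensuring the two regimes glue together with compatible weights $\langle x\rangle^{\pm(\frac92+)}$ is where most of the care is needed.
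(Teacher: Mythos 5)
Your overall architecture matches the paper's: decompose into spherical harmonics, treat the one-dimensional Regge--Wheeler evolutions via a splitting into low energies, energies near the barrier top, and high energies, and sum in $\ell$ using the gain from angular derivatives. But there is one genuine gap and several mechanical errors that would need to be fixed for the argument to close.

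The genuine gap is your Mourre estimate. You assert a commutator lower bound of the form $\mathbf{1}_I(H_\ell)\, i[H_\ell, A]\, \mathbf{1}_I(H_\ell) \gtrsim \mathbf{1}_I(H_\ell)$ on a window $I$ around the barrier-top energy $E_{\max}\sim\ell^2$. In the semiclassical normalization this reads $\chi_I(H)\,\frac{i}{\hbar}[H,A]\,\chi_I(H)\ge\theta\,\chi_I(H)$, which is the \emph{classical} non-trapping Mourre estimate of Graf and Hislop--Nakamura --- and it simply cannot hold here, because $E_{\max}$ is a trapping energy: there is a closed classical orbit (the fixed point at the potential maximum) at that energy, so the classical escape function $x\xi$ does not have positive Poisson bracket uniformly on the energy shell. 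What the paper actually proves (Lemma~\ref{lem:Mourre}) is the weaker bound $\chi_I(H)\,\frac{i}{\hbar}[H,A]\,\chi_I(H)\ge c_0\,\hbar\,\chi_I(H)$, with the extra factor of $\hbar$ supplied by the Heisenberg uncertainty principle (equivalently, the ground-state energy of the semiclassical harmonic oscillator $p^2+x^2\ge\hbar$) applied near the nondegenerate maximum, where $-xV'\gtrsim x^2$. One then has to show that this $\hbar$-degraded Mourre bound is still strong enough to run the Sigal--Soffer propagation machinery with only polynomial (in fact $\la\hbar t\ra^{-\alpha}$) time decay, which is the content of Proposition~\ref{prop:HSS} and requires its own semiclassical bookkeeping of all commutator constants. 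Without recognizing that the naive Mourre bound fails at the trapping energy and replacing it by the $\hbar$-weakened one, the argument as written does not go through; this replacement is precisely the technical heart of the paper.

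Two smaller points. First, the $\partial_x\psi_0$ in the data norm is \emph{not} produced by a one-dimensional Sobolev embedding in the passage from $L^2$ to $L^\infty$: it appears already in \eqref{eq:decaywaveL2}, and it comes from the extra factor of $E$ in the cosine propagator \eqref{eq:semievol} (equivalently, from $\sqrt{H}$ acting on the data), not from trading an $x$-derivative for sup-norm control. The $L^1\to L^\infty$ upgrade in the paper is instead done by Bernstein's inequality on the cut-off-localized evolution (costing $\hbar^{-1}$ overall) together with the bound $\|Y_{\ell,j}\|_\infty\lesssim\ell^{1/2}$; those two steps, not Sobolev embedding, account for the jump from $\snabla^5$ to $\snabla^{10}$. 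Second, your low-energy treatment leans on \cite{DSS}, but the key technical difficulty there is controlling the Jost solutions \emph{simultaneously} for $E\to0$ and $\hbar\to0$; the paper does this with the Langer/Liouville--Green machinery of \cite{CSST} and the exponential-region normal form of \cite{CST}, and the upshot is that the low-energy contribution decays \emph{arbitrarily fast in $\ell$}, not merely with ``mild'' $\ell$-dependence. The results of \cite{DSS} and \cite{DS} are then invoked only for the finitely many small angular momenta $\ell\le\ell_0$, which is in fact where the $t^{-3}$ rate comes from. Finally, you do not separately address the regime $E\gg V_{\max}$; the paper handles it with a turning-point-free WKB expansion in Section~4, which is a small but necessary third piece of the energy splitting.
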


The rate~$t^{-3}$ is well-known to be optimal for radial data, i.e., vanishing angular momentum, see for example~\cite{DS}. 
The same applies to the weight~$\la x\ra^{-4}$. 
We remark that Tataru~\cite{Tataru09} has recently obtained a striking result of this flavor but for essentially smooth data (he apparently needs a large number of angular derivatives).
On the other hand, he derives his result in the greater generality of a Kerr background (for small parameter $a$) and also obtains a 
Huygens principle. 
We expect that our methods can be generalized
to cover these as well as other scenarios, but we do not pursue this here.  Another result in this direction, albeit for Schwarzschild de-Sitter,
is due to Bony and H\"afner~\cite{BH}. By means of a resonance expansion they prove local exponential decay in that setting for compactly supported data. 

Let us mention two (related) extensions of Theorem~\ref{thm:main}. The first extension
concerns the type of black hole perturbation we can cover. As stated above, Theorem~\ref{thm:main} applies to scalar perturbations.
However, one has similar statements (but with {\em better decay}, see below) for gravitational and electromagnetic perturbations of the Schwarzschild black hole which appear as $\sigma=-3$ and $\sigma=0$,
respectively, in the Regge-Wheeler potential, see~\eqref{eq:ReggeWheeler} below.  In the case of $\sigma=-3$ one needs the data to be perpendicular 
to the spherical harmonics $Y_0$ and $\{ Y_{\ell,1}\}_{\ell=-1}^1$, and for $\sigma=0$ one needs to require orthogonality of the data to $Y_{0}$. 
These  conditions  eliminate  a gauge freedom inherent in the problem (such as changing the mass or the charge)\footnote{From the point of view of the decay estimates in~\cite{DSS}, these values need to be excluded as they are precisely the ones that give rise to a zero energy resonance.}.  We can cover these other values of~$\sigma$ for two reasons: (i) the decay bounds in~\cite{DSS} apply to them, and (ii) the WKB analysis in Section~\ref{sec:low} which is the only place where $\sigma$ plays a role in this paper, is insensitive to this modification.

The second extension concerns faster rates of decay. In fact, 
Theorem~\ref{thm:main} actually gives an arbitrary rate of decay, i.e., $t^{-N}$ for any $N$, provided
the data are perpendicular to the first few spherical harmonics (the exact number depending on~$N$). This follows immediately by inspection of our proof, since~\cite{DSS} establishes
accelerated rates as in Price's law~\cite{Price1}, \cite{Price2} for a fixed spherical harmonic.  One formulation of this result reads  as follows:

\begin{thm}
 \label{thm:main2}
 Suppose that $\psi[0]\perp Y_j$ where $Y_j$ are the spherical harmonics on~$S^2$ with eigenvalues less than~$\ell(\ell+1)$ with $\ell>0$.
  Then one has the following faster rates of decay
 for solutions $\psi$ of~\eqref{eq:wave} with data $\psi[0]=(\psi_0,\psi_1)$:
\begin{align}
\| \la x\ra^{-m}  \psi(t) \|_\infty &\les \, \la t\ra^{-(2\ell+2)} \| \la x\ra^{m} (\snabla^{n+1} \del_x\psi_0, \snabla^{n+1} \psi_0, \snabla^{n}\psi_1)  \|_1 \label{eq:decaywaveL1_acc}
\end{align}
The implicit constant depends on $\ell$ and $n,m$ are sufficiently large integers which grow linearly in~$\ell$. 
\end{thm}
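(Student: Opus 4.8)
The plan is to reduce the statement on the wave equation \eqref{eq:wave} to a family of one-dimensional Regge-Wheeler problems by decomposing into spherical harmonics, exactly as in the proof of Theorem~\ref{thm:main}. Writing $\psi(t,x,\omega) = \sum_{\ell,m} \psi_{\ell,m}(t,x) Y_{\ell,m}(\omega)$ with $\Delta_{S^2} Y_{\ell,m} = -\ell(\ell+1) Y_{\ell,m}$, each coefficient $\psi_{\ell,m}$ solves a fixed-$\ell$ equation of the form $-\partial_t^2 u + \partial_x^2 u - V_\ell(x) u = 0$, where $V_\ell$ is the Regge-Wheeler potential \eqref{eq:ReggeWheeler} with the angular momentum parameter $\ell$ (and a fixed $\sigma$). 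The hypothesis $\psi[0]\perp Y_j$ for all $Y_j$ with eigenvalue $<\ell(\ell+1)$ means that only the modes with angular momentum $\geq \ell$ are present. For each such mode the key input is the dispersive decay bound on the Regge-Wheeler equation established in \cite{DSS}, which — as recalled in the paragraph following Theorem~\ref{thm:main} — yields Price-law type accelerated decay $\langle t\rangle^{-(2\ell+2)}$ for a fixed spherical harmonic of index $\ell$, at the expense of a weighted $L^1\to L^\infty$ (and $L^2\to L^2$) bound whose constant depends on $\ell$.

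The main steps, in order, are: (i) fix $\sigma$ and record the explicit form of $V_\ell$, noting that $V_\ell(x) = \frac{\ell(\ell+1)}{r^2}F + (\text{lower order in }\ell)$ and that the far-field asymptotics $V_\ell(x)\sim \frac{\ell(\ell+1)}{x^2}$ govern the tail; (ii) invoke the fixed-$\ell$ estimate of \cite{DSS} in the form
\[
\| \langle x\rangle^{-n} u_\ell(t)\|_{L^\infty_x} \lesssim_\ell \langle t\rangle^{-(2\ell'+2)} \| \langle x\rangle^{n} (\partial_x u_{\ell,0}, u_{\ell,0}, u_{\ell,1})\|_{L^1_x}
\]
valid for every $\ell' \geq \ell$ appearing in the decomposition, where $n = n(\ell')$ grows linearly in $\ell'$; (iii) sum over $\ell'\geq \ell$ and over the corresponding $m$, using that each angular derivative $\snabla$ applied to the data costs a factor of the eigenvalue $\sim (\ell')^2$, so that $\snabla^{n}$ supplies enough decay in $\ell'$ to make the sum over $\ell'$ converge once $n$ is taken large enough (linearly in $\ell$); and (iv) collect the weights and the $L^1$ norms on $S^2$ to reassemble $\| \langle x\rangle^{m}(\snabla^{n+1}\partial_x\psi_0,\snabla^{n+1}\psi_0,\snabla^{n}\psi_1)\|_{L^1}$ on the right-hand side. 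Steps (i) and (iii) are essentially bookkeeping; the substance is entirely borrowed from \cite{DSS} via step (ii).

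The one genuine obstacle is uniformity of the fixed-$\ell$ estimate as $\ell'\to\infty$: the constant from \cite{DSS} may grow in $\ell'$, and one must quantify this growth and check that it is overcome by the polynomial-in-$\ell'$ gain coming from the angular derivatives, which in turn forces $n$ (and $m$) to grow linearly in $\ell$. This is precisely the "dichotomy" emphasized in the abstract — the tail rate $\langle t\rangle^{-(2\ell'+2)}$ depends only on the far-field asymptotics of $V_{\ell'}$, while the $\ell'$-dependence of the constant is dictated by the behavior of $V_{\ell'}$ near its maximum — so the required tracking of constants is handled by the semiclassical Sigal–Soffer/Mourre analysis developed elsewhere in the paper (and used for Theorem~\ref{thm:main}); here one simply applies that machinery with the better far-field rate in place of $t^{-3}$. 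I expect no new analytic difficulty beyond what Theorem~\ref{thm:main} already requires: the proof is the same argument run with the improved per-$\ell$ decay exponent, and the only care needed is in the summation over angular momenta. $\qed$
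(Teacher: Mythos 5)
Your high-level plan (decompose into spherical harmonics, estimate each fixed-$\ell'$ mode, sum) matches the paper's, which disposes of this theorem in a single sentence by declaring it analogous to Theorem~\ref{thm:main}. You also name both relevant ingredients: the Price-law fixed-$\ell$ bound from~\cite{DSS} and the semiclassical Mourre/Sigal--Soffer machinery developed in the paper. However, you assign them to the wrong parts of the sum, and that assignment is the entire content of the argument. Your step~(ii) invokes the~\cite{DSS} bound for \emph{every} $\ell'\ge\ell$, and step~(iii) hopes to sum by paying angular derivatives; but the paper states explicitly that the constants in~\cite{DSS} and~\cite{DS} grow \emph{super-exponentially} in the angular momentum, which no polynomial-in-$\ell'$ gain from $\snabla$ can absorb. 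Your closing paragraph frames this as a growth rate to be ``quantified and checked,'' but there is nothing to check --- summing the~\cite{DSS} bounds directly is impossible, which is precisely why the rest of the paper exists.

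The actual proof is the same split as in the proof of Theorem~\ref{thm:main}: for $\ell'$ above a fixed large threshold $\ell_1$, one uses the semiclassical estimates of the paper --- Lemma~\ref{lem:largeEdecay}, Corollary~\ref{cor:decay_top}, Proposition~\ref{prop:displow} --- which give $\la t\ra^{-k}$ for any $0\le k\ll\ell'$ with constants growing only polynomially in $\ell'\simeq\hbar^{-1}$ (roughly $\hbar^{-k-1}$), absorbable by angular derivatives; here one simply chooses $k=2\ell+2$. For the finitely many remaining $\ell'\in[\ell,\ell_1)$, one invokes~\cite{DSS} directly, where the super-exponential constant is harmless because the range is finite. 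Your phrase ``applies that machinery with the better far-field rate in place of $t^{-3}$'' also mischaracterizes the semiclassical estimates: the decay exponent $k$ is a free parameter in them from the outset, not a substituted input, and the $t^{-3}$ in Theorem~\ref{thm:main} is forced only by the low-$\ell$ modes handled through~\cite{DS} and~\cite{DSS}, which are absent here by hypothesis.
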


The decay predicted by Price's law is $t^{-2\ell-3}$ but at the moment we only obtain~$t^{-2\ell-2}$, see~\cite{DSS}.  In particular, for gravitational perturbations we take $\ell=2$ 
and for electromagnetic ones~$\ell=1$ leading to the decay rates $t^{-6}$ and $t^{-4}$, respectively, as stated in the abstract. Note that according to Price's law one should have  $t^{-7}$ and $t^{-5}$, respectively. 

\subsection{Extension to more general data}
As stated,  
Theorems \ref{thm:main} and \ref{thm:main2} require 
the initial data to vanish at the bifurcation sphere
$x \to -\infty$.
This is clearly a disadvantage of the result from the physical point of view since one would like
to cover more general perturbations.
However, there exists a classical construction by Kay and Wald \cite{KW87} which enables one to overcome 
this restriction.
In order to explain this clever geometric argument, we have to briefly digress into some more advanced 
aspects of the Schwarzschild geometry.
As is well-known, the Schwarzschild coordinates $(t,r, \theta, \varphi)$
cover only a small portion of a bigger manifold
which is referred to as \emph{maximally extended Schwarzschild} or the 
\emph{Kruskal extension}, see, e.g., \cite{HE73}, \cite{Wald84}.
This is shown by introducing a new coordinate system $(T,R,\theta,\varphi)$ which is related to
the Schwarzschild coordinates by
$$ R^2-T^2=\left(\frac{r}{2M}-1\right )e^{r/(2M)},\quad t=2M \log \left(\frac{R+T}{R-T}\right). $$
In Kruskal coordinates the Schwarzschild metric reads
$$ g=\frac{32M^3}{r}(-dT^2+dR^2)+r^2(d\theta^2+\sin^2\theta\;d\varphi^2) $$
for $R>|T|$ and $r$ is now interpreted as a function of $T$ and $R$.
However, the singularity at $r=2M$ (which corresponds to $R=|T|$) has disappeared and nothing 
prevents us from allowing all values of $T$ and $R$ provided that $R^2-T^2>-1$.
This yields the celebrated Kruskal extension.
A spacetime diagram of the Kruskal extension
is depicted in Fig.~\ref{fig:kruskal} and the wedge $\mathcal{S}$ (which consists of the two
shaded regions in Fig.~\ref{fig:kruskal}) represents the original Schwarzschild
manifold.
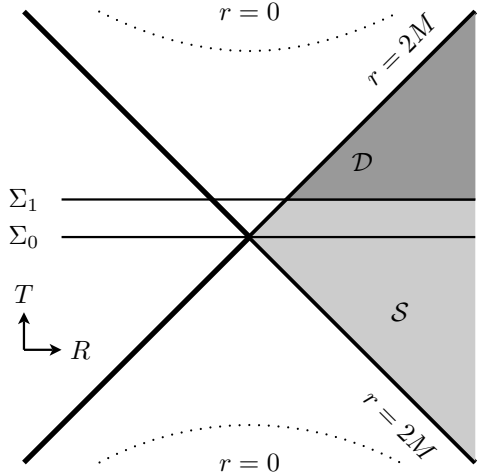
\begin{figure}[ht]
\ifx\JPicScale\undefined\def\JPicScale{1}\fi
\psset{unit=\JPicScale mm}
\psset{linewidth=0.3,dotsep=1,hatchwidth=0.3,hatchsep=1.5,shadowsize=1,dimen=middle}
\psset{dotsize=0.7 2.5,dotscale=1 1,fillcolor=black}
\psset{arrowsize=1 2,arrowlength=1,arrowinset=0.25,tbarsize=0.7 5,bracketlength=0.15,rbracketlength=0.15}
\begin{pspicture}(0,0)(72.5,60)
\psline[linewidth=0.7](0,60)(60,0)
\psline[linewidth=0.7,fillstyle=vlines](0,0)(60,60)
\rput(0,30){$\Sigma_0$}
\rput(0,35){$\Sigma_1$}
\rput{45}(50,55){$r=2M$}
\rput{-45}(50,5){$r=2M$}
\newrgbcolor{userFillColour}{0.8 0.8 0.8}
\psline[fillcolor=userFillColour,fillstyle=solid](60,60)
(30,30)(60,0)
\psline(5,30)(60,30)
\psline(5,35)(60,35)
\newrgbcolor{userFillColour}{0.6 0.6 0.6}
\psline[fillcolor=userFillColour,fillstyle=solid](60,60)
(35,35)(60,35)
\rput(45,40){$\mathcal{D}$}
\rput(50,20){$\mathcal{S}$}
\rput{0}(30,-37.5){\psellipticarc[linestyle=dotted](0,0)(42.5,42.5){61.93}{118.07}}
\psbezier[linestyle=dotted](10,60)(24,53)(36,53)(50,60)
\rput(30,60){$r=0$}
\rput(30,0){$r=0$}
\psline{<-}(0,20)(0,15)
\psline{->}(0,15)(5,15)
\rput(7.5,15){$R$}
\rput(0,22.5){$T$}
\end{pspicture}
\caption{The Kruskal spacetime. 
The two shaded regions together comprise the original exterior Schwarzschild manifold, denoted
by $\mathcal{S}$. 
The solution of the wave equation $\Box_g \psi=0$ in the darker shaded region $\mathcal{D}$ is
uniquely determined by Cauchy data on $\Sigma_1 \cap \mathcal{S}$.}
\label{fig:kruskal}
\end{figure}
The Kruskal spacetime is globally hyperbolic and in order to describe the Kay-Wald argument, 
we consider two Cauchy surfaces 
$\Sigma_0$ and $\Sigma_1$ at $T=0$
and some small $T>0$, respectively.
The intersection $\Sigma_0 \cap \mathcal{S}$ corresponds
to the initial surface $t=0$ in Theorems \ref{thm:main} and \ref{thm:main2}.
Suppose now we prescribe initial data on $\Sigma_0$ (sufficiently regular, with sufficient decay at
spatial infinity but not necessarily vanishing at the bifurcation sphere $T=R=0$) and consider
the wave equation $\Box_g \psi=0$ with these data.
We are interested in the future development in the original Schwarzschild wedge $\mathcal{S}$.
According to the domain of dependence property of the wave equation (see \cite{HE73}) 
the development to the future of $\Sigma_1$ in $\mathcal{S}$ (the domain $\mathcal{D}$ in
Fig.~\ref{fig:kruskal}) 
is entirely determined by the 
values of $\psi$ and $\psi_T$ on $\Sigma_1\cap \mathcal{S}$.
Now we prescribe initial data $(f,g)$ on $\Sigma_1$ such that
\begin{enumerate}
\item $f(R,\theta,\varphi)=-f(-R,\theta,\varphi)$, $g(R,\theta,\varphi)=-g(-R, \theta, \varphi)$,
\item $(f,g)$ coincide with $(\psi, \psi_T)$ on $\Sigma_1 \cap \mathcal{S}$,
\item $(f,g)$ are as regular as $(\psi, \psi_T)$ on $\Sigma_1$.
\end{enumerate}
It is obvious from the spacetime diagram Fig.~\ref{fig:kruskal} that this can be done.
Then we consider the solution $\tilde{\psi}$ of $\Box_g \tilde{\psi}=0$ with data $(f,g)$ 
on $\Sigma_1$.
By the aforementioned domain of dependence property we have $\tilde{\psi}=\psi$ in $\mathcal{D}$.
The key observation now is the existence of the discrete isometry $(T,R,\theta,\varphi)\mapsto
(T,-R,\theta,\varphi)$ which leaves the line $R=0$ invariant and guarantees that 
property (1) of the data $(f,g)$ is propagated by the wave flow, i.e., we have
$\tilde{\psi}(T,R,\theta,\varphi)=-\tilde{\psi}(T,-R,\theta,\varphi)$ which in particular implies
$\tilde{\psi}(T,0,\theta,\varphi)=0$ for all $T$.
As a consequence, by evaluating $\tilde{\psi}$ and $\tilde{\psi}_T$ on $\Sigma_0$,
we obtain new initial data on $\Sigma_0$ \emph{which vanish at the
bifurcation sphere and lead to the same solution in $\mathcal{D}$} as the original data
$(\psi,\psi_T)|_{\Sigma_0}$.
If the data are sufficiently regular, they have to vanish at least linearly in $R$ at the bifurcation
sphere which yields exponential decay with respect to the tortoise coordinate $x$ as $x\to -\infty$ and 
our Theorems \ref{thm:main} and \ref{thm:main2} apply.

We remark in passing that the discrete isometry which lies at the heart of the Kay-Wald argument
is a very fragile property 
which cannot be expected to hold in more general spacetimes.
Recently, Dafermos and Rodnianski \cite{DafRod09} 
devised a more robust method based on vector field multipliers
which is capable of extending decay estimates
up to the horizon.

\subsection{Strategy of proof of Theorems \ref{thm:main} and \ref{thm:main2}}

The strategy for the proof of Theorem~\ref{thm:main} is to decompose the solution  into spherical harmonics and then to sum the resulting decay estimates.
The wave equation~\eqref{eq:wave} at fixed angular momentum turns into a wave equation in $1+1$
dimensions, namely in the time variable~$t$ and the ``radial'' variable~$x$.
The angular derivatives in the estimates \eqref{eq:decaywaveL2} and~\eqref{eq:decaywaveL1} then arise as weights given by powers of the angular momentum.
 This procedure is not expected to yield the optimal bounds as far as the number of angular derivatives
 is concerned. The specific numbers appearing in  \eqref{eq:decaywaveL2}  above
 are a result of the Mourre estimate approach to the ``top of the barrier scattering'' which we develop
 in this paper. This Mourre estimate is non-classical in the sense that it needs to take into account that the top energy is trapping. We deal with the issue by means
 of the Heisenberg uncertainty principle (or the ground state of the semiclassical harmonic oscillator). The transition from our Mourre estimate to the decay in time
 is accomplished by means of Sigal-Soffer propagation theory going back to~\cite{SigSof2}, but the implementation we follow is~\cite{HSS}. 
 The further losses in terms of angular derivatives in~\eqref{eq:decaywaveL1} as compared to~\eqref{eq:decaywaveL2}  are due to  the Bernstein inequality and the $L^\infty$ bound on
 the spherical harmonics. 
 
 It is likely that
 Theorem~\ref{thm:main}  can be improved with regard to the number of angular derivatives required by a more detailed analysis of the spectral
 measure (at fixed angular momentum) for energies near the potential maximum. This would involve a reduction
 to Weber's equation and an explicit perturbative analysis of the Jost solutions instead of the more indirect Mourre-Sigal-Soffer method.
 However, since it would complicate this paper
 we have chosen not to follow that route. We emphasize that the number of derivatives~$\snabla$ appearing in our
 theorems is determined {\em exclusively} by the analysis near the maximum energy. 
 There is a sizable literature on the topic of scattering near
 a potential maximum and, more generally,  on scattering in the presence of trapping energies, see for example \cite{ABR2}, \cite{BFRZ1}, \cite{BCD}, \cite{GG},
 \cite{HS2}, \cite{Nak}, \cite{Ram}, and \cite{Sjo} and the references in these papers. However, we find that the available results in that direction are either
 not sharp enough for our purposes, or actually can be sidestepped completely with the Mourre approach we follow here. 

 For energies near zero, the Mourre estimates become degenerate.
 Therefore we need to rely on a WKB-type analysis of a semiclassical problem which we describe in 
 detail in Section~\ref{subsec:decomp} (the semiclassical parameter being $\hbar=\ell^{-1}$ where $\ell$ is the angular momentum).
 The main issue for low lying energies is that the errors of the
 perturbative analysis of the spectral measure (and the Jost solutions) have to be  controlled {\em simultaneously
 for all energies near zero and all small~$\hbar$}.  This was accomplished in~\cite{CSST} and~\cite{CST}. 

We remark that the technical part of this paper is entirely devoted
to large angular momenta - in other words, to the analysis of the
semiclassical equation. In fact, for angular momenta $0\le\ell\le
\ell_0$ where $\ell_0$ is large we invoke the bounds from~\cite{DSS}
and~\cite{DS}. The constants appearing in the decay bounds in
these papers grow rapidly in the angular momentum (in some
super-exponential fashion). This precludes us from summing them
in~$\ell$ and necessitates the separate WKB/Mourre analysis of this
paper. However, since the latter only applies to large~$\ell$ the
methods developed in~\cite{DSS} and~\cite{DS} are of crucial
importance for Theorem~\ref{thm:main}.

This paper is not self-contained, as it relies on the results
of~\cite{CSST}, \cite{CST}, \cite{DSS} and~\cite{DS}. Needless to say, there is
a long history concerning Price's law, see~\cite{Tataru09} as well as~\cite{DafRod3}. We refer
to those papers as well as to the introduction of our earlier
paper~\cite{DSS} for a detailed list of references and more
background. We would also like to mention that this paper as well as \cite{CSST}, \cite{CST}, \cite{DSS} and~\cite{DS},
are a result of those investigations into decay of wave equations on curved backgrounds with trapping metrics 
which began with the  surface of revolution papers~\cite{SSS1}, \cite{SSS2}. 

\subsection{Angular momentum decomposition}\label{subsec:decomp}

Restricting to spherical harmonics, the wave equation~\eqref{eq:wave} takes the form
\begin{equation} \label{eq:ellwave}
\del^2_t \psi-\del_x^2 \psi + V_{\ell,\sigma}(x)\psi=\del_t^2 \psi + \calH_{\ell,\sigma}\psi =0
\end{equation}
with the Regge-Wheeler potential
\begin{equation}\label{eq:ReggeWheeler}
V_{\ell,\sigma}(x)=\left (1-\frac{2M}{r(x)} \right )
\left (\frac{\ell(\ell+1)}{r^2(x)}+\frac{2M\sigma}{r^3(x)} \right )
\end{equation}
for $\sigma=1$. However, as mentioned before we allow for other values of $\sigma$ as well with the
physically relevant ones being $\sigma=-3,0$. 
We will take $\ell$ large and study the semiclassical operator
\begin{equation}
H=H(\hbar):= -\hbar^2 \del_x^2 + V(x;\hbar)
\end{equation}
with the normalization $V(x;\hbar):=\hbar^2 V_{\ell,\sigma}(x)$, and
$V(\xphot;\hbar)=1$ where $\xphot$ is the location of the unique maximum of the potential. 
Thus $\hbar\sim \ell^{-1}$ as $\ell\to\infty$.  The maximum has the property that\footnote{Throughout, $a\simeq 1$ means that $C^{-1}\le a\le C$ for some constant~$C$.} 
\begin{equation}
\label{eq:nondegmax}
 V'(\xphot;\hbar)=0,\quad V''(\xphot;\hbar)\simeq -1
\end{equation}
uniformly in $\hbar$ and $\xphot\simeq M$. 
 For the cosine evolution one has for $x'<x$
\begin{align}
&\cos\big(t\sqrt{\calH_{\ell,\sigma}}\big) (x,x') \nn \\&= \int_0^\infty
\cos(t\lambda)\nn
\Im \Big[ \frac{f_{+}(x,\lambda;\ell)f_{-}(x',\lambda;\ell)}{W(f_{+}(\cdot,\lambda;\ell),f_{-}(\cdot,\lambda;\ell))}
\Big] \lambda d\lambda  \\
& = \frac{2}{\pi \hbar^2} \int_0^\infty \cos(\hbar^{-1}tE) \Im \Big[
\frac{f_{+}(x,E;\hbar)f_{-}(x',E;\hbar)}{W(f_{+}(\cdot,E;\hbar),f_{-}(\cdot,E;\hbar))}
\Big] \, E\,dE    \label{eq:semievol}
\end{align}
with $f_{\pm}$ being the outgoing Jost solutions for the original operator $\calH_{\ell,\sigma}$ and
the semiclassical one, respectively.
For the latter case this means that
\begin{align*}
 (-\hbar^2 \del_x^2 + V(x;\hbar))f_{\pm}(x,E;\hbar) &= E^2 f_{\pm}(x,E;\hbar) \\
 f_{\pm}(x,E;\hbar) &\sim e^{\pm \prefac Ex} \qquad x\to \pm \infty.
\end{align*}
Furthermore, we write $W(f,g):=fg'-f'g$ for the Wronskian of two functions $f,g$. 
The sine evolution is given by
\begin{equation}\label{eq:sineevol}
 \frac{\sin\big(t\sqrt{\calH_{\ell,\sigma}}\big) (x,x')}{\sqrt{\calH_{\ell,\sigma}}}= \frac{2}{\pi \hbar}\int_0^\infty
\sin(\hbar^{-1}tE) e(E,x,x';\hbar)\, dE
\end{equation}
with the semiclassical spectral measure
\begin{equation} e(E,x,x';\hbar)=\Im \Big[
\frac{f_{+}(x,E;\hbar)f_{-}(x',E;\hbar)}{W(f_{+}(\cdot,E;\hbar),f_{-}(\cdot,E;\hbar))}
\Big]
\label{eq:esemiclass}
\end{equation}
In order to control the semiclassical evolution we distinguish energies $0<E<\eps$, $\eps\le E<100$
and $100\le E$. Here $\eps>0$ is some fixed small constant which does not depend on~$\hbar$.

The regime of large energies is relatively easy, whereas the low
lying energies as well as those near the maximum $V=1$ represent the
most difficult contributions to analyze. For small energies we
follow the analysis of~\cite{CSST} and~\cite{CST} which was specifically developed
with this application in mind. In the former paper, the challenge was to carry out
the WKB analysis for a smooth, positive, inverse square potential
{\em uniformly} for small $\hbar$ and small energies $0<E<\eps$.
This was accomplished by means of Langer's uniformizing
transformation which reduces the perturbative analysis to an Airy
equation. We note that a novel feature was the modification of the
potential, see Section~\ref{sec:low}. In this paper we have to go
beyond~\cite{CSST} since the Regge-Wheeler potential exhibits
inverse square decay only in the far field, whereas it decays
exponentially towards the event horizon. This is where~\cite{CST} applies, which 
develops a normal form reduction for the exponentially decaying region to the left
of the maximum of the potential. 

As mentioned above, we do not employ a uniformizing transformation
for energies close to the maximum $V=1$; this can indeed be done,
and requires a perturbation theory around Weber's equation\footnote{We will pursue this matter elsewhere. 
This approach seems needed in order to prove a Huyghen's principle along the lines of this paper.}.
Instead,
we prove a Mourre estimate near the maximum. This is somewhat
unusual as the maximum energy {\em is trapping} and therefore needs
to be excluded in the classical Mourre theory, see~\cite{Graf},
\cite{HisNak}. However, a simple application of the uncertainty
principle (or the ground state of the semiclassical harmonic
oscillator) allows one to deal with this trapping case as well. Of
crucial importance here is that the maximum of the potential is
nondegenerate\footnote{Technically speaking, the methods of this
paper apply to potentials with a unique maximum satisfying $V^{(j)}(\tilde x)=0$ for $1\le j\le k$,
$V^{(k+1)}(\tilde x)\ne0$. However, the number of derivatives~$\snabla$ in
our decay estimates would then increase with~$k$.} (in fact,
$V''(\xphot)<0$).
  Once the Mourre bound is established, we employ a semiclassical version of the propagation estimates of Hunziker, Sigal, Soffer~\cite{HSS}, which 
  in turn go back to the work of Sigal, Soffer~\cite{SigSof2}, see
Section~\ref{sec:HSS} below.

\subsection{Notations}
In this paper we frequently employ the notation $a \lesssim b$ (for $a,b \in \mathbb{R}$) meaning that
there exists an (absolute) constant $c>0$ such that $a\leq cb$.
We also use $a \gtrsim b$ and write $a\simeq b$ if $a \lesssim b$ and $b \lesssim a$.
Furthermore, $O(f(x))$ denotes a generic complex--valued function that satisfies $|O(f(x))|\lesssim
|f(x)|$ in a domain of $x$ that is either stated explicitly or follows from the context.
We write $O_\mathbb{R}(f(x))$ to indicate that the respective function is real--valued.
The symbol $\sim$ is reserved for asymptotic equality, i.e., $f(x) \sim g(x)$ as $x \to a$, where
$f,g$ are two complex--valued functions, means that
$$ \lim_{x \to a}\frac{f(x)}{g(x)}=1. $$

\section{Low lying energies and WKB}\label{sec:low}

In this section we bound \eqref{eq:semievol} for energies
$0<E<\eps$. Our approach is based on the perturbative WKB analysis
of the Jost solutions which was developed in~\cite{CSST} and~\cite{CST}.  More
precisely, \cite{CSST} applies to the case of $x\ge0$ for which 
the potential decays like an inverse square. For $x\le0$ the
potential exhibits exponential decay as $x\to-\infty$ and~\cite{CST} develops
the methods needed for that case. We present the main steps of the analysis developed in these papers but omit the most
involved technical details so as not to disrupt the flow of the argument.

\subsection{The far field} \label{subsec:farfield}

We begin with the former case, i.e., $x\ge0$. In fact, we  shall apply the analysis of this section to $x\ge x_0$
where $x_0<0$ is a {\em fixed} constant. In fact, any $x_0<0$ is admissible, see below.
Define a modified potential\footnote{It was shown in \cite{CSST} that
the WKB approximation can only be applied {\em after} this modification.}
\[
V_0(x;\hbar):= V(x;\hbar) + \frac{\hbar^2}{4}\la x\ra^{-2}
\]
and denote by $x_1(E;\hbar)>0$ the unique positive turning point for
any\footnote{This condition will be tacitly in force throughout this section.} $0<E<\eps$, i.e., 
the solution of $V_0(x;\hbar)=E^2$, $x>0$.

\subsubsection{Liouville-Green transform and reduction to a perturbed Airy equation}

The analysis of~\cite{CSST} was based on the following ``Langer transform'', which in turn is a special case of the Liouville-Green transform,
see~\cite{Olver}, \cite{Miller}. See Lemma~3 in \cite{CSST} for essentially the same statement.

\begin{lemma}\label{lem:langer}
With\footnote{We warn the reader that what was called $E$ in~\cite{CSST} is now called~$E^2$.} $Q_0:=V_0-E^2$
\begin{equation}
  \label{eq:zeta}
   \zeta=\zeta(x,E;\hbar):= \sign (x-x_1(E;\hbar)) \Big|\frac32 \int_{x_1(E;\hbar)}^x
\sqrt{|Q_0(u,E;\hbar)|}\, du\Big|^{\frac23}
\end{equation}
defines a smooth change of variables $x\mapsto\zeta$
for all $x\ge x_0$. Let $q:=-\frac{Q_0}{\zeta}$. Then  $q>0$, $\frac{d\zeta}{dx}=\zeta'=\sqrt{q}$, and
\[
-\hbar^2f''+(V-E^2)f= 0
\]
transforms into
\begin{equation}\label{eq:Airy}
-\hbar^2 \ddot w(\zeta) = (\zeta+\hbar^2 \tilde V(\zeta,E;\hbar))w(\zeta)
\end{equation}
under $w= \sqrt{\zeta'} f = q^{\frac14} f$. Here $\dot{\
}=\frac{d}{d\zeta}$ and
\[
\tilde V := \frac{1}{4} q^{-1} \langle x\rangle ^{-2} - q^{-\frac14} \frac{d^2
q^{\frac14}}{d\zeta^2}
\]
\end{lemma}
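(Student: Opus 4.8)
The plan is to verify Lemma~\ref{lem:langer} by a direct, if slightly laborious, change-of-variables computation. First I would check that $\zeta=\zeta(x,E;\hbar)$ is a genuine smooth diffeomorphism. The map is built from the function $x\mapsto \int_{x_1}^x \sqrt{|Q_0|}\,du$; away from the turning point $x_1(E;\hbar)$ this has a nonvanishing derivative $\sqrt{|Q_0|}$, so smoothness there is clear. The only delicate point is at $x=x_1$, where $Q_0$ has a simple zero (this uses $V_0'(x_1;\hbar)\neq0$, which follows from the single-turning-point structure and the fact that $V_0$ is strictly decreasing for large $x$ — and more carefully from~\eqref{eq:nondegmax} together with the monotonicity of the Regge-Wheeler profile to the right of the maximum). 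Near a simple zero, $\int_{x_1}^x\sqrt{|Q_0|}\,du \sim c|x-x_1|^{3/2}$, so raising to the $2/3$ power and restoring the sign produces a function behaving like $c'(x-x_1)$ to leading order, hence smooth and with nonvanishing derivative across $x_1$. The standard reference for this (Olver, Miller) can be cited; I would quote it rather than reprove it. This also gives $q:=-Q_0/\zeta>0$ (both numerator and denominator change sign at $x_1$ in the same way), and by differentiating the defining relation $\zeta^{3/2}\sign(\cdot)\cdot\tfrac23 = \pm\int\sqrt{|Q_0|}$ one gets $\sqrt{\zeta}\,\zeta' = \sqrt{|Q_0|}$, i.e.\ $(\zeta')^2 = |Q_0|/\zeta = -Q_0/\zeta = q$, so $\zeta'=\sqrt q$ as claimed.

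Next I would carry out the substitution $w=q^{1/4}f=(\zeta')^{1/2}f$ in the equation $-\hbar^2 f''+(V-E^2)f=0$. Writing everything in terms of $\zeta$: from $f=(\zeta')^{-1/2}w$ one computes $f' = \frac{df}{d\zeta}\zeta'$, and then $f'' = \zeta'' \frac{df}{d\zeta} + (\zeta')^2\frac{d^2 f}{d\zeta^2}$. Substituting $f=(\zeta')^{-1/2}w = q^{-1/4}w$ and using $\zeta'=\sqrt q$ throughout, the first-derivative (in $\zeta$) terms cancel — this is the whole point of the Liouville normalization — and one is left with
\[
-\hbar^2 (\zeta')^2\Big(\ddot w - q^{-1/4}\frac{d^2 q^{1/4}}{d\zeta^2}\,w\Big) + Q_0 w = 0.
\]
Dividing by $(\zeta')^2=q$ and using $Q_0 = -q\zeta$ turns the $Q_0$ term into $-\zeta w$, giving
\[
-\hbar^2\ddot w = \zeta w + \hbar^2\, q^{-1/4}\frac{d^2 q^{1/4}}{d\zeta^2}\, w.
\]
Finally I would restore the difference $V_0-V = \tfrac{\hbar^2}{4}\langle x\rangle^{-2}$: since the transform was performed with $Q_0 = V_0-E^2$ but the stated ODE involves $V-E^2 = Q_0 - \tfrac{\hbar^2}{4}\langle x\rangle^{-2}$, the extra $\tfrac{\hbar^2}{4}\langle x\rangle^{-2}f$ term, after the $w$-substitution and division by $q$, contributes $+\hbar^2\cdot\tfrac14 q^{-1}\langle x\rangle^{-2}w$ to the right-hand side. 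Collecting, the perturbation is exactly $\hbar^2\tilde V w$ with $\tilde V = \tfrac14 q^{-1}\langle x\rangle^{-2} - q^{-1/4}\frac{d^2 q^{1/4}}{d\zeta^2}$, which is~\eqref{eq:Airy}.

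The main obstacle, and the reason this is a \emph{lemma} rather than an exercise, is the smoothness of $\zeta$ across the turning point and the positivity and regularity of $q$ there — equivalently, that $\tilde V$ is a bona fide bounded (in the relevant weighted sense) potential near $x_1$ despite the apparent $q^{-1/4}$ singularities. Everything hinges on $Q_0$ having a \emph{simple} zero at $x_1$ with a quantitative lower bound on $|Q_0'(x_1)|$ that is uniform in $\hbar$ and in $0<E<\eps$; this uniformity is what \cite{CSST} works hard to establish (it is why the modified potential $V_0$ is used rather than $V$ — the $\tfrac{\hbar^2}{4}\langle x\rangle^{-2}$ correction is precisely the Langer modification that makes the Airy reduction valid down to $E=0$). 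I would therefore not reprove these uniform bounds here but cite Lemma~3 of~\cite{CSST} for the change-of-variables statement, and limit the present argument to the algebraic verification of the transformed equation~\eqref{eq:Airy} and the form of $\tilde V$, noting that the extra $\tfrac{\hbar^2}{4}\langle x\rangle^{-2}$ in $V_0$ versus $V$ is what produces the first term of $\tilde V$.
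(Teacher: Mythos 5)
Your proof is correct and follows essentially the same route as the paper: establish smoothness of $\zeta$ across the simple turning point from the simple zero of $Q_0$, then verify the transformed ODE by the Liouville--Green substitution, with the $\tfrac{\hbar^2}{4}\la x\ra^{-2}$ discrepancy between $V$ and $V_0$ producing the first term of $\tilde V$. The only cosmetic difference is that the paper computes $\dot w,\ddot w$ directly from $w=q^{1/4}f$ and substitutes $-\hbar^2f''=(E^2-V)f$, while you invert to express $f''$ in terms of $\ddot w$ and add the $V_0-V$ correction at the end; this is the same algebra in a slightly different order.
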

\begin{proof}
 It is clear that~\eqref{eq:zeta} defines a smooth map away from the point $x=x_1(E;\hbar)$.
Taylor expanding $Q_0(x,E;\hbar)$ in a neighborhood of that point
and using that $V_0'(x_1(E;\hbar))<0$ implies that
$\zeta(x,E;\hbar)$ is smooth around $x=x_1$ as well with
$\zeta'(x_1,E;\hbar)>0$.  Next, one checks that
\begin{align*}
 \dot w &= q^{-\frac14} f' + \frac{d q^{\frac14}}{d\zeta} f,\quad
 \ddot w = q^{-\frac34} f'' + \frac{d^2 q^{\frac14}}{d\zeta^2} f
\end{align*}
and thus, using $-\hbar^2f'' = (E^2-V)f$,
\begin{align*}
 -\hbar^2 \ddot w &= q^{-1} (E^2-V) w - \hbar^2 q^{-\frac14} \frac{d^2 q^{\frac14}}{d\zeta^2} w \\
&=  q^{-1} (-Q_0 + \hbar^2 \la x\ra^{-2} /4) w - \hbar^2 q^{-\frac14} \frac{d^2 q^{\frac14}}{d\zeta^2} w \\
&= \zeta w(\zeta) + \hbar^2 \big(q^{-1}\la x\ra^{-2}/4 - q^{-\frac14} \frac{d^2 q^{\frac14}}{d\zeta^2}\big) w
\end{align*}
as claimed.
\end{proof}

The Airy equation~\eqref{eq:Airy} provides a convenient way of
solving the matching problem\footnote{The usual WKB machinery
requires solving two matching problems, namely between the Airy
region and the oscillatory region on the one hand, and the Airy
region and the exponential growth/decay region on the other hand;
see for example~\cite{Miller}.}
 at the turning point~$\zeta=0$.  We remark that it was assumed in~\cite{CSST}
that the potential satisfies $V(x)=\mu x^{-2}+O(x^{-3})$ as $x\to \infty$.  However, the methods of that paper
equally well apply under the weaker assumption $V(x)=\mu x^{-2}+O(x^{-\sigma})$ as $x\to \infty$ where $\sigma>2$; furthermore, one needs
$\del_x^k O(x^{-\sigma}) = O(x^{-\sigma-k})$ for all $k\ge0$. This is relevant here since the Regge-Wheeler potential exhibits
a $\frac{\log x}{x^3}$-correction to the leading $x^{-2}$-decay.

\subsubsection{A basis for the perturbed Airy equation} For the Airy functions $\Ai, \Bi$ appearing below we
refer the reader to Chapter~11 of~\cite{Olver}. To the left of the
turning point a fundamental system of~\eqref{eq:Airy} is described
by the following result, see Proposition~8 in~\cite{CSST}.

\begin{prop}
  \label{prop:AiryI} Let $\hbar_0>0$ be small. A fundamental system of solutions to~\eqref{eq:Airy} in the range
$\zeta\le0$ is given by
\begin{equation}\nonumber
\begin{aligned}
  \phi_1(\zeta,E,\hbar) &=
  \Ai(\tau) [1+\hbar a_1(\zeta,E,\hbar)] \\
  \phi_2(\zeta,E,\hbar) &=
  \Bi(\tau) [1+\hbar a_2(\zeta,E,\hbar)]
\end{aligned}
\end{equation}
 with
$\tau:=-\hbar^{-\frac23}\zeta$.
 Here $a_1, a_2$ are smooth, real-valued, and they satisfy
the bounds, for all $k\ge0$ and $j=1,2$, and with $\zeta_0:=\zeta(x_0,E)$,
\begin{equation}\label{eq:aj_est}
\begin{aligned}
  \sup_{\zeta_0\le\zeta\le0}|\partial_E^k  a_j(\zeta,E,\hbar)| &\less  E^{-k}  \\
 |\partial_E^k \partial_\zeta a_j(\zeta,E,\hbar)| &\les E^{-k} \Big[\hbar^{-\frac13} \la
\hbar^{-\frac23}\zeta\ra^{-\frac12}\chi_{[-1\le \zeta\le0]} +
|\zeta|^{\frac12}  \chi_{[ \zeta_0\le \zeta\le -1]}\Big ]
\end{aligned}
\end{equation}
uniformly in the parameters $0<\hbar<\hbar_0$, $0<E<\eps$.
\end{prop}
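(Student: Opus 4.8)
The plan is to follow the construction in Proposition 8 of~\cite{CSST}, adapting it to the current setting with the weaker far-field decay $V(x)=\mu x^{-2}+O(x^{-\sigma})$, $\sigma>2$. First I would pass to the rescaled variable $\tau=-\hbar^{-2/3}\zeta$, so that the unperturbed part of~\eqref{eq:Airy} becomes the standard Airy equation $\ddot w=\tau w$ (in $\tau$), whose fundamental system is $\Ai(\tau),\Bi(\tau)$. The perturbation term $\hbar^2\tilde V(\zeta,E;\hbar)w$ then becomes, after rescaling, an $O(\hbar^{4/3})$-type perturbation in the $\tau$ variable, but — and this is the key point — one must track how $\tilde V$ behaves as $\zeta\to\zeta_0$ (equivalently $x\to x_0$), where it is bounded, versus near $\zeta=0$, where the factor $q^{-1/4}\frac{d^2 q^{1/4}}{d\zeta^2}$ is controlled by the smoothness of the Liouville–Green map established in Lemma~\ref{lem:langer}.

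Next I would set up the Volterra integral equations for the correction factors. Writing $\phi_1=\Ai(\tau)(1+\hbar a_1)$ and substituting into~\eqref{eq:Airy}, one obtains an ODE for $a_1$ which, using the Wronskian $W(\Ai,\Bi)=\pi^{-1}$ and variation of parameters against the pair $\Ai,\Bi$, converts to an integral equation of the schematic form
\begin{equation}\nonumber
a_1(\zeta)=\hbar\int_{\zeta_0}^{\zeta} K(\zeta,\zeta')\,\big(1+\hbar a_1(\zeta')\big)\,\tilde V(\zeta')\,d\zeta',
\end{equation}
with a kernel $K$ built from products $\Ai(\tau)\Bi(\tau')/\Ai(\tau)$ etc. The decisive estimate is the uniform bound on this kernel for $\zeta,\zeta'\in[\zeta_0,0]$: on the oscillatory side $\tau\ge 0$ one uses $|\Ai(\tau)|,|\Bi(\tau)|\lesssim\langle\tau\rangle^{-1/4}$ together with the fact that $\Ai$ does not vanish too badly relative to $\Bi$ on that range, while near $\tau=0$ both Airy functions are comparable to constants. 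Combined with the integrability of $\tilde V(\zeta')\,d\zeta'$ over $[\zeta_0,0]$ — which in turn rests on the change of variables $d\zeta = \sqrt{q}\,dx$ and the far-field decay of $V$ — a standard contraction/Picard iteration in a weighted sup-norm produces $a_1$ with $\sup_{\zeta_0\le\zeta\le0}|a_j|\lesssim 1$ uniformly in $0<\hbar<\hbar_0$, $0<E<\eps$. For $\phi_2$ one runs the symmetric argument integrating from the opposite endpoint so that the subdominant/dominant roles of $\Ai,\Bi$ are used correctly.

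For the derivative bounds in~\eqref{eq:aj_est} I would differentiate the Volterra equation. The $\partial_E^k$ bounds follow by differentiating under the integral sign: each $E$-derivative hitting $\tilde V$, $q$, the turning point $x_1(E;\hbar)$, or the kernel produces a factor $O(E^{-1})$ (this is the familiar loss from $\partial_E x_1 \sim E^{-1}$ near the turning point and from $\partial_E$ of $\zeta$), so $k$ derivatives cost $E^{-k}$, and the integral structure is preserved. The $\partial_\zeta a_j$ estimate is more delicate: differentiating $a_j(\zeta)=\hbar\int_{\zeta_0}^\zeta K(\zeta,\zeta')(\cdots)d\zeta'$ in $\zeta$ gives a boundary term plus $\hbar\int \partial_\zeta K$; the boundary term vanishes by the choice of kernel, and the $\partial_\zeta K$ integral, when estimated, yields exactly the two regimes in~\eqref{eq:aj_est} — the factor $\hbar^{-1/3}\langle\hbar^{-2/3}\zeta\rangle^{-1/2}$ for $-1\le\zeta\le0$ comes from $\partial_\zeta$ acting through $\tau=-\hbar^{-2/3}\zeta$ on $\Ai(\tau)$ near and to the right of the turning point, where $|\Ai'(\tau)|\lesssim\langle\tau\rangle^{1/4}$; and the factor $|\zeta|^{1/2}$ for $\zeta_0\le\zeta\le-1$ comes from the asymptotics of $\Ai,\Bi$ and their derivatives deep in the oscillatory region together with the growth of $\tilde V$ there. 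The main obstacle I anticipate is precisely making the kernel bound and the iteration uniform all the way down to $\zeta=\zeta_0$ while simultaneously keeping it uniform as $E\downarrow 0$ — the turning point $x_1(E;\hbar)$ degenerates in a way that must be absorbed by Lemma~\ref{lem:langer}'s claim of smoothness of $x\mapsto\zeta$ together with the modified potential $V_0$, and it is the $\hbar^2/4\langle x\rangle^{-2}$ modification that rescues integrability of $\tilde V$ at spatial infinity and hence the whole scheme. Since all of this is carried out in detail in~\cite{CSST} (Proposition~8) and the only new input is the replacement of the $O(x^{-3})$ remainder by $O(x^{-\sigma})$ with matching derivative bounds, I would simply check that each occurrence of the decay rate in their proof survives this weakening and otherwise cite their argument.
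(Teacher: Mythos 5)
Your overall plan -- deriving the perturbed Airy basis by Volterra iteration against the $\Ai,\Bi$ pair and verifying that the only modification to Proposition~8 of~\cite{CSST} needed here is absorbing the weaker far-field decay -- matches the paper, which does essentially nothing more than cite \cite{CSST} and record two small adaptations. But the sketch of the mechanism contains a sign confusion that would derail the estimates if you carried them out. The range of the Proposition is $\zeta\le 0$, hence $\tau=-\hbar^{-2/3}\zeta\ge 0$: this is the \emph{exponential} (classically forbidden) regime of the Airy functions, not the oscillatory one. On $\tau\ge0$ one has $\Ai(\tau)\simeq\tau^{-1/4}e^{-\frac23\tau^{3/2}}$ and $\Bi(\tau)\simeq\tau^{-1/4}e^{+\frac23\tau^{3/2}}$, so the bound $|\Bi(\tau)|\lesssim\langle\tau\rangle^{-1/4}$ you invoke is false there, as is $|\Ai'(\tau)|\lesssim\langle\tau\rangle^{1/4}$; both hold only on $\tau\le0$, which is the range of Proposition~\ref{prop:AiryII}, not this one. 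What actually controls the Volterra kernel is the cancellation $\Ai(\tau)\Bi(\tau)=O(\langle\tau\rangle^{-1/2})$ in the cross term, together with integrating the equation for $a_1$ from $\zeta=0$ outward toward $\zeta_0<0$ (i.e.\ from small $\tau$ toward large $\tau$), so that the dangerous factor $\Bi(\tau)/\Ai(\tau)\sim e^{\frac43\tau^{3/2}}$ is absorbed by $\Ai^2(\tau')\sim e^{-\frac43(\tau')^{3/2}}$ with $\tau'\ge\tau$; for $\phi_2$ the roles are reversed. The direction of integration matters for exactly this reason, not because one is in an oscillatory region. The weight $\hbar^{-1/3}\langle\hbar^{-2/3}\zeta\rangle^{-1/2}$ near $\zeta=0$ and the $|\zeta|^{1/2}$ weight further out then fall out of the exponential Airy asymptotics (not the oscillatory ones) combined with the structure of $\tilde V$.

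Two smaller points. First, \cite{CSST} parametrizes by the energy itself, which the present paper calls $E^2$; the $E^{-k}$ losses survive this relabeling by the chain rule, and the paper records this explicitly -- you re-derive the loss from $\partial_E x_1$ so your argument is not wrong, but be aware of the mismatch when citing \cite{CSST}. Second, the far-field correction here is $\frac{\log x}{x^3}$ rather than a generic $O(x^{-\sigma})$, and the paper localizes the only place this enters \cite{CSST} (their Lemma~7, and the step between (4.12) and (4.13)) and checks it is harmless there, which is a tighter way to discharge that obligation than re-auditing the whole proof as you propose.
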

\begin{proof}
 This is essentially Proposition~8 in~\cite{CSST}. The two differences are (i) we work with $E^2$ instead
of~$E$ (ii) the potential has a $\frac{\log x}{x^3}$-correction to the leading inverse square decay rather than
the $x^{-3}$-correction assumed in~\cite{CSST}.

As far as (i) is concerned, we note the following. Let $E$ be as in~\cite{CSST} and assume $\tilde E^2:=E$.
If $|\del_E a(E)|\les E^{-1}$,
then $b(\tilde E):= a(E)$ satisfies $|\del_{\tilde E} b(\tilde E)|\les \tilde E^{-1}$ by the chain rule.
So it makes no difference whether we work with $E$ or~$\tilde E$.
As for (ii), we note that the only change to the estimates in Section~3 of~\cite{CSST} is in Lemma~7, where one needs
to replace $\la x\ra^{-3}\beta_1(x,E)$ with $\la x\ra^{-3}\log x\, \beta_1(x,E)$, cf.~(3.20) and~(3.24) in that paper.
However, inspection of the proof of Proposition~8 there reveals that this does not affect the resulting bound in any way,
see the paragraph between~(4.12) and~(4.13) there.
\end{proof}

We remark that Proposition~\ref{prop:AiryI} would fail if we had defined $\zeta$ in~\eqref{eq:zeta} with $Q=V-E^2$ instead of~$Q_0$.
In the region $\zeta\ge0$ we have a basis of oscillatory solutions as described by the following result, see Proposition~9 in~\cite{CSST}.

\begin{prop}
  \label{prop:AiryII} Let $\hbar_0>0$ be small. In the range
$\zeta\ge 0$ a basis of solutions to~\eqref{eq:Airy} is given by
\begin{equation}\nonumber
\begin{aligned}
  \psi_1(\zeta,E;\hbar) &=
  (\Ai(\tau)+i\Bi(\tau)) [1+\hbar b_1(\zeta,E;\hbar)] \\
  \psi_2(\zeta,E;\hbar) &=
  (\Ai(\tau)-i\Bi(\tau)) [1+\hbar b_2(\zeta,E;\hbar)]
\end{aligned}
\end{equation}
 with
$\tau:=-\hbar^{-\frac23}\zeta$ and where
$b_1, b_2$  are smooth, complex-valued, and satisfy the bounds
for all $k\ge0$, and $j=1,2$
\begin{equation}\label{eq:bj_est}
\begin{aligned}
 | \partial_E^k\, b_j(\zeta,E;\hbar)| &\le C_{k}\, E^{-k} \la\zeta\ra^{-\frac32} \\
|\partial_\zeta\partial_E^k b_j(\zeta,E;\hbar)| &\le C_{k}\, E^{-k} \hbar^{-\frac13} \la\zeta\ra^{-2}
\end{aligned}
\end{equation}
uniformly in the parameters $0<\hbar<\hbar_0$, $0<E<\eps$,
$\zeta\ge0$.
\end{prop}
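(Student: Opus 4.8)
\noindent\textit{Proof proposal.} As the statement says, this is essentially Proposition~9 in~\cite{CSST}, so the plan is to reproduce that argument, with the two modifications — working with $E^2$ in place of $E$, and allowing the weaker far-field expansion $V(x)=\mu x^{-2}+O(x^{-\sigma})$, $\del_x^k O(x^{-\sigma})=O(x^{-\sigma-k})$, $\sigma>2$ (so a $\frac{\log x}{x^3}$ tail is admissible) — handled exactly as in the proof of Proposition~\ref{prop:AiryI}. First I would establish the bounds on the effective potential $\tilde V$ of Lemma~\ref{lem:langer} in the range $\zeta\ge0$, i.e.\ for $x\ge x_1(E;\hbar)$: using $q=-Q_0/\zeta=(\zeta')^2$, the smoothness of $\zeta$ through the turning point, and the far-field decay of $V_0$, one shows that $\tilde V$ is smooth, that $\del_E^k\tilde V$ decays in $\zeta$ fast enough to be integrable on $[0,\infty)$ (an extra $\del_\zeta$ costing a negative power of $\hbar$), and that each $\del_E$ costs a factor $E^{-1}$ — this last point, via $\del_E x_1\simeq E$ and the $E$-dependence of $\zeta$, is where the weights $E^{-k}$ in~\eqref{eq:bj_est} come from — all uniformly for $0<\hbar<\hbar_0$, $0<E<\eps$. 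It is essential here that the Langer modification $V\mapsto V_0=V+\frac{\hbar^2}{4}\la x\ra^{-2}$ of Section~\ref{subsec:farfield} is in force, since it is precisely what cancels the otherwise non-integrable resonant piece of $-q^{-1/4}\frac{d^2q^{1/4}}{d\zeta^2}$ in $\tilde V$; cf.\ the remark preceding the proposition, which makes the corresponding point for $\zeta\le0$.

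Next I would carry out the reduction of order: write $\psi_j=w_j^0(1+\hbar b_j)$, where $w_1^0=\Ai(\tau)+i\Bi(\tau)$ and $w_2^0=\Ai(\tau)-i\Bi(\tau)$ solve the unperturbed Airy equation $-\hbar^2\ddot w=\zeta w$, substitute into~\eqref{eq:Airy}, and integrate the resulting inhomogeneous equation against the Green's function of the Airy operator built from $w_1^0,w_2^0$, normalizing at $\zeta=+\infty$ so that $b_j\to0$ there (which pins down the outgoing asymptotics $\psi_j\sim w_j^0$ as $x\to\infty$). This gives a Volterra integral equation of the schematic form
\[
b_j(\zeta)=\int_\zeta^\infty K(\zeta,\zeta')\,\tilde V(\zeta')\,\bigl(1+\hbar b_j(\zeta')\bigr)\,d\zeta',
\]
whose kernel $K$ is controlled by the classical asymptotics of $\Ai,\Bi$ in the oscillatory regime $\tau\to-\infty$ (amplitudes $\simeq\la\tau\ra^{-1/4}$ with bounded oscillation, derivatives $\simeq\la\tau\ra^{1/4}$, Wronskian of $w_1^0,w_2^0$ constant in $\zeta$). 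Since $\tilde V$ is integrable against the relevant weight, the map is a contraction in the $\hbar b_j$ term, so the Picard iteration $b_j^{(0)}=0$, $b_j^{(n+1)}(\zeta)=\int_\zeta^\infty K\,\tilde V\,(1+\hbar b_j^{(n)})$ converges (the correction $\psi_j-w_j^0$ being of relative size $O(\hbar)$), producing a smooth solution with $|b_j(\zeta,E;\hbar)|\les\la\zeta\ra^{-3/2}$. Differentiating the integral equation in $E$ and in $\zeta$ and iterating once more would then yield~\eqref{eq:bj_est}: each $\del_E$ lands on $\tilde V$ (gain $E^{-1}$), on the kernel or the lower limit through $\zeta=\zeta(x,E;\hbar)$, or on $b_j$ (closing the induction on $k$), while the single $\del_\zeta$ produces the loss $\hbar^{-1/3}$ — coming from $\del_\zeta\tau=-\hbar^{-2/3}$ together with the $\la\tau\ra^{1/4}$ amplitudes — and improves the $\zeta$-decay to $\la\zeta\ra^{-2}$.

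The step I expect to be the main obstacle, and the reason one should copy~\cite{CSST} closely rather than argue loosely, is the uniformity through the transition zone $\zeta\sim\hbar^{2/3}$, where $\tau=O(1)$ and the Airy functions pass from oscillatory to monotone behaviour; there the naive WKB expansion breaks down, which is exactly why the potential is modified to $V_0$ and why the argument is organised around the exact perturbed Airy equation~\eqref{eq:Airy} rather than a WKB ansatz. Concretely, $K$ and the derivative bounds must be estimated through $\tau=O(1)$ and then matched both to the deep oscillatory region $|\tau|\gg1$ and, across $\zeta=0$, to the fundamental system of Proposition~\ref{prop:AiryI}. Once the $\tilde V$ bounds and the Airy asymptotics are in place, the Volterra iteration and the bookkeeping of the $E$- and $\zeta$-derivatives are routine, and — just as noted in the proof of Proposition~\ref{prop:AiryI} — are unaffected by replacing the $x^{-3}$ correction by $\frac{\log x}{x^3}$ or by using $E^2$ in place of $E$.
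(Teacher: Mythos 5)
The paper gives no proof of this proposition at all — it is stated as a pure citation to Proposition~9 of~\cite{CSST}, with the modifications (working with $E^2$ in place of $E$, and the weaker $O(x^{-\sigma})$/$\log x$ far-field hypothesis) handled by reference to the proof of Proposition~\ref{prop:AiryI}, exactly as you observe. Your reconstruction of the underlying argument — $\tilde V$ bounds in the oscillatory region, reduction of order $\psi_j=w_j^0(1+\hbar b_j)$ leading to a Volterra equation normalized at $\zeta=+\infty$, iteration, and then tracking $\del_E$ and $\del_\zeta$ through the integral equation — is consistent both with the structure of~\cite{CSST} and with the paper's own use of the same Volterra-for-corrections device in the proof of Corollary~\ref{cor:tildebasis}; your remarks on the Langer modification and the transition zone $\zeta\sim\hbar^{2/3}$ correctly identify where the care is needed. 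So this is essentially the same approach as the paper, only with the citation unpacked into a plausible sketch; the one point I would flag is that your schematic kernel should carry an explicit $\hbar^{-1}$ prefactor (as in the $c_1$ equation in Corollary~\ref{cor:tildebasis}), which is then compensated by the $O(\hbar)$ gain from the oscillation of $w_j^0$ in the $\zeta'$-integral — that cancellation is the mechanism behind your observation that "the correction is of relative size $O(\hbar)$."
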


\subsubsection{The outgoing Jost solution for the far field}\label{subsubsec:jostright}

We can now draw the following conclusions from
Propositions~\ref{prop:AiryI} and~\ref{prop:AiryII} about the
outgoing Jost solution $f_{+}$. First, recall that the Airy functions
satisfy the following asymptotic expansions with
$\xi=\frac23x^{\frac32}$:
\begin{align}
\nn \Ai(x)&= \frac{e^{-\xi}}{2\pi^{\frac12}x^{\frac14}}(1+O(\xi^{-1})),\quad \Bi(x)=\frac{e^{\xi}}{\pi^{\frac12}x^{\frac14}}(1+O(\xi^{-1}))   \\
\label{eq:Airyasymp} \Ai(-x)&= \frac{1}{\pi^{\frac12}x^{\frac14}}\big[\cos(\xi-\frac14\pi)(1+O(\xi^{-2}))+\sin(\xi-\frac14\pi)O(\xi^{-1})\big],\\
 \nn   \Bi(-x)&=\frac{1}{\pi^{\frac12}x^{\frac14}}\big[-\sin(\xi-\frac14\pi)(1+O(\xi^{-2})) + \cos(\xi-\frac14\pi)O(\xi^{-1})\big]
\end{align}
as $x\to\infty$, see~\cite{Olver}. Moreover,
\[
 W(\Ai,\Bi)=\Ai\,\Bi'-\Ai'\,\Bi=\frac{1}{\pi}
\]
In what follows,
\begin{align*}
 S_+(E;\hbar) &:=  \int_{x_0}^{x_1(E;\hbar)} \sqrt{V_0(y;\hbar)-E^2}\, dy  \\
T_+(E;\hbar) &:= Ex_1(E;\hbar) - \int_{x_1(E;\hbar)}^\infty
\big(\sqrt{V_0(y;\hbar)-E^2}-E \big)\, dy
\end{align*}
One checks that $S_+(E;\hbar)\sim |\log E|$ as $E\to0+$, whereas $T_+(E;\hbar)\to T_+(0;\hbar)$, some finite number. Moreover,
\[
 |\del_E^k S(E;\hbar)|+| \del_E^k T_+(E;\hbar)|\le C_k\, E^{-k}
\]
for all $k\ge1$.
One has\footnote{We suppress $\hbar$ as argument in most functions, even though everything here does depend on~$\hbar$.}
\begin{equation}\label{eq:f+psi2}
 f_+(x,E ) = \sqrt{\pi}\,E^{\frac12}\hbar^{-\frac16}e^{i(\frac{T_+(E)}{\hbar}+\frac{\pi}{4})} q^{-\frac14}(\zeta) 
 \psi_2(\zeta,E ).
\end{equation}
This is obtained by matching the asymptotic behavior of $f_+$ with that of $\psi_2(\zeta)$ as $x\to\infty$
and by using the relation $w=q^{\frac14}f$ from Lemma~\ref{lem:langer}.
We refer the reader to \cite{CSST} for all the details.
We now connect $\psi_2$ to the basis $\phi_j(\zeta,E )$ of Proposition~\ref{prop:AiryI}:
\[
 \psi_2(\zeta,E ) = c_1(E ) \phi_1(\zeta,E ) + c_2(E ) \phi_2(\zeta,E )
\]
where
\[
 c_1(E ) = \frac{W(\psi_2(\cdot,E ),\phi_2(\cdot,E ))}{W(\phi_1(\cdot,E ), \phi_2(\cdot,E ))}, \quad c_2(E ) = - \frac{W(\psi_2(\cdot,E ),
 \phi_1(\cdot,E ))}{W(\phi_1(\cdot,E ), \phi_2(\cdot,E ))}
\]
By Proposition~\ref{prop:AiryI},
\[
 W(\phi_1(\cdot,E ), \phi_2(\cdot,E )) = -\hbar^{-\frac23} W(\Ai,\Bi) + O(\hbar^{\frac13}) =-\pi^{-1}\hbar^{-\frac23}(1+O(\hbar))
\]
where we evaluated the Wronskian on the left-hand side at $\zeta=0$. Next, by Propositions~\ref{prop:AiryI} and~\ref{prop:AiryII},
\begin{equation}
 \label{eq:Wphipsi}
\begin{aligned}
  W(\psi_2(\cdot,E ),\phi_2(\cdot,E )) &= -\hbar^{-\frac23}[(\Ai(0)-i\Bi(0))\Bi'(0)\\
&\qquad -(\Ai'(0)-i\Bi'(0))\Bi(0)+O(\hbar)] \\
&= -\hbar^{-\frac23}[W(\Ai,\Bi) +O(\hbar)] \\
W(\psi_2(\cdot,E ),\phi_1(\cdot,E )) &= -\hbar^{-\frac23}[(\Ai(0)-i\Bi(0))\Ai'(0)\\
&\qquad -(\Ai'(0)-i\Bi'(0))\Ai(0)+O(\hbar)] \\
&= -\hbar^{-\frac23}[iW(\Ai,\Bi) +O(\hbar)]
\end{aligned}
\end{equation}
so that
\begin{equation}\label{eq:c1c2}
 c_1(E )=1+O(\hbar),\quad c_2(E )=-i+O(\hbar)
\end{equation}
where the $O(\cdot)$ terms satisfy $|\partial_E^k O(\hbar)|\le C_k\,\hbar E^{-k}\ $.
With $\zeta_0=\zeta(x_0,E)$ we infer that
\begin{align*}
 f_+(x_0,E ) &= \sqrt{\pi}e^{i(\frac{T_+(E)}{\hbar}+\frac{\pi}{4})}\,E^{\frac12}\hbar^{-\frac16} q^{-\frac14}(\zeta_0)\psi_2(\zeta_0,E ) \\
&= \sqrt{\pi}e^{i(\frac{T_+(E)}{\hbar}+\frac{\pi}{4})}\,E^{\frac14}\hbar^{-\frac16} q^{-\frac14}(\zeta_0) [ c_1(E )\phi_1(\zeta_0,E ) + c_2(E ) \phi_2(\zeta_0,E ) ]\\
 f_+'(x_0,E ) &= \sqrt{\pi}e^{i(\frac{T_+(E)}{\hbar}+\frac{\pi}{4})}\,E^{\frac12}\hbar^{-\frac16}\zeta'(x_0)q^{-\frac14}(\zeta_0)\big[\dot\psi_2(\zeta_0,E )-\frac14 \frac{\dot q}{q}(\zeta_0)\psi_2(\zeta_0,E )  \big] \\
&=\sqrt{\pi}e^{i(\frac{T_+(E)}{\hbar}+\frac{\pi}{4})}\,E^{\frac12}\hbar^{-\frac16}q^{\frac14}(\zeta_0)\big[c_1(E )\dot\phi_1(\zeta_0,E ) + c_2(E ) \dot\phi_2(\zeta_0,E ) \\
&\quad -\frac14 \frac{\dot q}{q}(\zeta_0)(c_1(E )\phi_1(\zeta_0,E ) + c_2(E ) \phi_2(\zeta_0,E ) )  \big]
\end{align*}
where we have used that $\zeta'=q^{\frac12}$, see Lemma~\ref{lem:langer}. Furthermore,
\[
 \frac{\dot q}{q}(\zeta_0) = -\zeta_0^{-1} + |\zeta_0|^{\frac12} \frac{V_0'(x_0)}{(V_0(x_0)-E^2)^{\frac32}} = O(|\zeta_0|^{\frac12})
\]
with $O(\del_E^k |\zeta_0|^{\frac12}) = O(E^{-k})$ for $k\ge1$.
From Proposition~\ref{prop:AiryI}, with $O_\R$ denoting a real-valued term, 
\begin{align*}
\phi_1(\zeta_0,E ) &= \Ai(-\hbar^{-\frac23}\zeta_0)(1+O_\R(\hbar))\\
 \phi_2(\zeta_0,E ) &=  \Bi(-\hbar^{-\frac23}\zeta_0)(1+O_\R(\hbar))\\
 \dot\phi_1(\zeta_0,E ) &= -\hbar^{-\frac23}\Ai'(-\hbar^{-\frac23}\zeta_0)(1+O_\R(\hbar))+O_\R(\hbar)|\zeta_0|^{\frac12}\Ai(-\hbar^{-\frac23}\zeta_0)\\
 \dot\phi_2(\zeta_0,E ) &= -\hbar^{-\frac23}\Bi'(-\hbar^{-\frac23}\zeta_0)(1+O_\R(\hbar))+O_\R(\hbar)|\zeta_0|^{\frac12}\Bi(-\hbar^{-\frac23}\zeta_0)
\end{align*}
which implies via \eqref{eq:Airyasymp} that
\begin{align*}
\phi_1(\zeta_0,E ) &= (4\pi)^{-\frac12}(\hbar^{-\frac23}|\zeta_0|)^{-\frac14}e^{-\frac23 \hbar^{-1}|\zeta_0|^{\frac32}}(1+O_\R(\hbar))\\
 \phi_2(\zeta_0,E ) &= \pi^{-\frac12}(\hbar^{-\frac23}|\zeta_0|)^{-\frac14}e^{\frac23 \hbar^{-1}|\zeta_0|^{\frac32}}(1+O_\R(\hbar))\\
 \dot\phi_1(\zeta_0,E ) &= \hbar^{-\frac23}(4\pi)^{-\frac12}(\hbar^{-\frac23}|\zeta_0|)^{\frac14}e^{-\frac23 \hbar^{-1}|\zeta_0|^{\frac32}}(1+O_\R(\hbar))\\
 \dot\phi_2(\zeta_0,E ) &= -\hbar^{-\frac23}\pi^{-\frac12}(\hbar^{-\frac23}|\zeta_0|)^{\frac14}e^{\frac23 \hbar^{-1}|\zeta_0|^{\frac32}} (1+O_\R(\hbar))
\end{align*}
In view of these properties and using that $e^{-\hbar^{-1}|\zeta_0|^{\frac32}} = O_\R(\hbar)$ where $\partial_E^k O_\R(\hbar) = O(E^{-k}\hbar)$,
one obtains (with $c_2$ as above)
\begin{equation}
 \begin{aligned}
  f_+(x_0,E ) &= c_2\, \gamma\,  q^{-\frac14}(\zeta_0) \phi_2(\zeta_0,E )  \big[ (1+O_{\R}(\hbar)) +i(\frac12+O(\hbar)) e^{-2\hbar^{-1}S_+} \big ] \\
f_+'(x_0,E ) &=  c_2\, \gamma\, q^{\frac14}(\zeta_0) \dot\phi_2(\zeta_0,E )  \big[ (1+O_{\R}(\hbar)) -i(\frac12+O(\hbar)) e^{-2\hbar^{-1}S_+} \big ]
 \end{aligned}
\label{eq:f_+andder}
\end{equation}
where $\gamma=\gamma(E,\hbar):= -\sqrt{\pi}\,e^{i(\frac{T_+(E)}{\hbar}+\frac{\pi}{4})}\,E^{\frac12}\hbar^{-\frac16}$ and  with
\[
\frac23\,|\zeta_0|^{\frac32}= S_+(E;\hbar) =S_+ = \int_{x_0}^{x_1(E;\hbar)} \sqrt{V_0(x;\hbar)-E^2}\,dx
\]
being the action integral defined earlier. 
Furthermore, it follows from Propositions~\ref{prop:AiryI} and~\ref{prop:AiryII} that each differentiation in~$E$ loses one power of~$E$ (in particular,
the $O(\hbar)$ terms have this property). 
For future reference, we remark that
\begin{equation}\label{eq:f+quot}
\frac{f_+'(x_0,E )}{f_+(x_0,E )}=-d_1\hbar^{-1}(1+O_\R(\hbar)) \big [1+ O(e^{-2\hbar^{-1}S_+}) \big ]
\end{equation}
where $d_1>0$ is a constant (depending on~$x_0$). In particular, 
\[
\Im\Big[\frac{f_+'(x_0,E )}{f_+(x_0,E )} \Big] = \hbar^{-1}  O(e^{-2\hbar^{-1}S_+(E;\hbar)})
\]

\subsection{Approaching the event horizon}\label{subsec:event}

We now deal with the potential for $x\le  x_0$. Here $x_0<0$ is chosen such that the Regge-Wheeler potential 
(setting $2M=1$ for simplicity) can be written as
\[
 V(x;\hbar)=\sum_{n=1}^\infty c_{n-1}(\hbar) e^{nx}
\]
as a convergent series $x\le x_0$ uniformly in
$\hbar\in(0,\hbar_0]$.  In fact, since the Lambert function $W(z)$ defined via  $W(z) e^{W(z)}=z$ is analytic on $|z|<e^{-1}$, it follows that one can take any $x_0<0$. 
The coefficients have expansions in powers
of~$\hbar$ and we normalize such that $c_0(\hbar)=1+O(\hbar^2)$.   
One can also check that $c_1(0)\ne0$.  The goal is to control the Jost solutions
$f_-(x,E;\hbar)$ as $x\to-\infty$ uniformly for $(E,\hbar)\in
(0,\eps)\times (0,\hbar_0)$.

\subsubsection{Transforming the problem to a compact interval}

For notational convenience, we switch from $x$ to $-x$ and consider
$x>|x_0|$. The problem is then to control $f_+$ for the problem
\begin{equation}
\label{eq:fV}
 -\hbar^2 f_+''(x,E;\hbar) + V(x;\hbar) f_+(x,E;\hbar) = E^2 f_+(x,E;\hbar)
\end{equation}
with
\[
V(x;\hbar)=e^{-x} (1+ \sum_{n=1}^\infty c_n(\hbar) e^{-nx}) 
\]
We now transform this case into a semi-classical scattering problem
on a bounded interval $(0,y_0)$ by introducing the new independent
variable $y=2e^{-\frac{x}{2}}$. 
Setting  $f(x)=g(y)$ reduces finding the outgoing Jost solution to the equation for $g(y)=g(y,E;\hbar)$, 
\[
-\hbar^2 [g''(y)+y^{-1} g'(y)] + \Big(\Omega(y;\hbar)
-\frac{4E^2}{y^2}\Big)g(y)=0
\]
with the normalization $g(y)\sim (y/2)^{-2i\frac{E}{\hbar}}$ as
$y\to0+$, and with
\begin{equation}
\Omega(y;\hbar) = 1+ \sum_{n=1}^\infty \frac{c_n(\hbar)}{4^n} y^{2n}
\label{eq:phiseries}
\end{equation}
analytic in $|y|<y_0:=2e^{-x_0/2}$.
Finally, setting $\tilde g(y;\hbar):=y^{\frac12}g(y;\hbar)$ yields the
equation
\begin{equation}
  \label{eq:tildeggleichung}
  -\hbar^2 \tilde g''(y,E;\hbar) +\Big(\Omega(y;\hbar)-\big(\frac{\hbar^2}{4}+4E^2\big)y^{-2}\Big)
  \tilde g(y,E;\hbar) =0
\end{equation}
with the normalization
\begin{equation}
 \label{eq:tildegasymp}
\tilde g(y,E;\hbar)\sim
2^{2i\frac{E}{\hbar}}\, y^{\frac12-2i\frac{E}{\hbar}}
\end{equation}
 as $y\to0+$.
We remark that in the case $\Omega(y;\hbar)\equiv 1$ the
equation~\eqref{eq:tildeggleichung} is a modified Bessel equation
with a basis given by the modified Bessel functions
$I_{i\nu}(\hbar^{-1}y)$ and $K_{i\nu}(\hbar^{-1}y)$ with
$\nu=2\frac{E}{\hbar}$. It is shown in~\cite{CST} by means of a suitable Liouville-Green
transform  that this basis leads 
to an actual basis of~\eqref{eq:tildeggleichung}.  We begin with the following normal form result from~\cite{CST}
which is based on a Liouville-Green transform (the variable $z$ below is a rescaling of $y$: $y=\alpha z$, with $\alpha:=\sqrt{\hbar^2/4+4E^2}$). 
Recall that $f\simeq1$ means that $C^{-1}<f<C$ for some constant~$C$. 

\begin{lemma}[\cite{CST}]
\label{lem:normform}
Let $\Omega$ be as above and $\alpha_0>0$ be sufficiently small.   For all
$0<\alpha<\alpha_0$ 
 there exists a $C^\infty$ diffeomorphism  $w=w(z,\alpha): I_0(\alpha):=(0,\alpha^{-1}y_0)\to J_0(\alpha):=(0,\alpha^{-1}w_0(\alpha))$ 
 where $y_0$ is as above with the following properties, uniformly in~$0<\alpha<\alpha_0$ :
\begin{itemize}
\item $w_0(\alpha)\simeq 1$ 
\item $w'(z,\alpha)\simeq 1$  for all $z\in I_0(\alpha)$
\item $|\del_\alpha^k \del_z^\ell w(z,\alpha)|  \le C_{k,\ell} \, \la z\ra^{1+k}\, \alpha^{\ell}$ for all $k,\ell\ge0$ and $z\in I_0(\alpha)$
\end{itemize} 
Let $\hbar_1:=\alpha^{-1} \hbar$. Then there exists a function $V_2(\cdot,\alpha;\hbar)$ such that
$\psi$ solves the rescaled form of~\eqref{eq:tildeggleichung}, viz.
\begin{equation}\label{eq:psiz}
-\hbar_1^2 \psi''(z)+ \big(\Omega(\alpha z;\hbar)-z^{-2}\big)\psi(z)=0
\end{equation}
on $I_0(\alpha)$ iff $\vphi(w):= (w'(z,\alpha))^{\frac12} \psi(z)$ (where $w=w(z,\alpha)$) solves
\begin{equation}\label{eq:wgleichung}
-\hbar_1^2 \vphi''(w)+ \big(1-w^{-2}\big)\vphi(w)= \hbar_1^2 V_2(w) \vphi(w)
\end{equation}
on $J_0(\alpha)$. Furthermore, the potential $V_2(\cdot,\alpha;\hbar)$ satisfies 
\[
|\del_\alpha^k \del_w^\ell  V_2(w,\alpha;\hbar)|\le C_{k,\ell}\, \la w\ra^{1+k} \alpha^{3+\ell}
\]
for all $w\in J_0(\alpha)$ and $k,\ell\ge0$. 
\end{lemma}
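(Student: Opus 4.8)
The plan is to obtain $w(\cdot,\alpha)$ as a Liouville--Green (Liouville) change of variables adapted to the model potential $1-w^{-2}$, and then simply to \emph{define} $V_2$ as the Schwarzian remainder that this transformation leaves behind. For a diffeomorphism $z\mapsto w=w(z,\alpha)$ and $\vphi(w):=(w'(z))^{1/2}\psi(z)$ (with $'=d/dz$), a direct computation gives that $\psi$ solves \eqref{eq:psiz} iff
\[
-\hbar_1^2\vphi''(w)+(w'(z))^{-2}\Big[\Omega(\alpha z;\hbar)-z^{-2}+\tfrac{\hbar_1^2}{2}\{w;z\}\Big]\vphi(w)=0 ,
\]
where $\{w;z\}=\frac{w'''}{w'}-\frac32\big(\frac{w''}{w'}\big)^2$ is the Schwarzian derivative of $w$ in $z$. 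I would therefore choose $w$ to solve the eikonal identity $(w'(z))^2(1-w(z)^{-2})=\Omega(\alpha z;\hbar)-z^{-2}$, which turns the principal part into exactly $1-w^{-2}$, and then set $V_2(w):=-\tfrac12\,(w'(z))^{-2}\{w;z\}$ evaluated at $z=z(w)$; this is precisely the leftover coefficient, so \eqref{eq:wgleichung} holds.

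To solve the eikonal identity I would match turning points in the standard way. On $I_0(\alpha)=(0,\alpha^{-1}y_0)$ the function $Q(z):=\Omega(\alpha z;\hbar)-z^{-2}$ is negative near $0$, positive near the right endpoint (there $z^{-2}=O(\alpha^2)$ while $\Omega\simeq1$ on $[0,y_0]$), and has a unique simple zero $z_1=z_1(\alpha,\hbar)$ with $z_1\simeq1$, since $\Omega(0;\hbar)=c_0(\hbar)=1+O(\hbar^2)$; likewise $1-w^{-2}$ has its only (simple) zero at $w=1$. Then $w$ is defined implicitly by
\[
\int_1^w\sqrt{\big|1-s^{-2}\big|}\,ds=\int_{z_1}^z\sqrt{\big|\Omega(\alpha u;\hbar)-u^{-2}\big|}\,du ,
\]
signs taken consistently on the two sides of the turning point. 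Since $\sqrt{|Q(z)|}=z^{-1}+O(z)$ as $z\to0$, the right side is $\sim\log z\to-\infty$, forcing $w\to0$ with $w\simeq z$; and since $w'\simeq1$ up to the right endpoint (see below), $w$ tends there to $\alpha^{-1}w_0(\alpha)$ with $w_0(\alpha):=\alpha\,w(\alpha^{-1}y_0)\simeq y_0\simeq1$, which fixes $J_0(\alpha)$.

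The main effort, and the main obstacle, is to show that $w(\cdot,\alpha)$ is a $C^\infty$ diffeomorphism with $w'\simeq1$ and the stated higher-order bounds, all \emph{uniformly} in $0<\alpha<\alpha_0$ and in $\hbar$. Away from $z_1$ this is immediate; across the turning point I would argue exactly as in the proof of Lemma~\ref{lem:langer}: both $z\mapsto\sign(z-z_1)\,|\tfrac32\int_{z_1}^z\sqrt{|Q(u)|}\,du|^{2/3}$ and the analogous map built from $1-s^{-2}$ at $s=1$ are $C^\infty$ with non-vanishing derivative (Taylor-expand each potential at its simple zero), and $w$ is their composition, hence a diffeomorphism; then $w'\simeq1$ follows from the eikonal identity because the two potentials vanish to exactly first order at their turning points and are comparable everywhere else. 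The estimates $|\del_\alpha^k\del_z^\ell w|\les\la z\ra^{1+k}\alpha^\ell$ come out by differentiating the eikonal identity repeatedly, using that each $\del_z$ falling on $\Omega(\alpha z;\hbar)$ produces a factor $\alpha$, that $\del_\alpha$ of $\alpha z$ produces a factor $z\les\la z\ra$, and that $w\simeq z$.

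For $V_2$ I would exploit the \emph{free case} $\Omega\equiv1$: then $Q(z)=1-z^{-2}$, the eikonal identity is solved by $w=z$, so $\{w;z\}\equiv0$ and $V_2\equiv0$. Consequently $V_2=-\tfrac12(w')^{-2}\{w;z\}$ is driven entirely by $\Omega(\alpha z;\hbar)-1=(c_0(\hbar)-1)+\sum_{n\ge1}\frac{c_n(\hbar)}{4^n}(\alpha z)^{2n}$, which is $O(\alpha^2\la z\ra^2)$ with every extra $z$-derivative costing another power of $\alpha$; substituting the bounds on $w$ into the third-order Schwarzian expression and passing back from $z$ to $w=w(z)$ (permissible since $w'\simeq1$) yields $|\del_\alpha^k\del_w^\ell V_2(w,\alpha;\hbar)|\les\la w\ra^{1+k}\alpha^{3+\ell}$, the gain of the third power of $\alpha$ over the naive $\alpha^2$ being an artifact of the derivative structure of the Schwarzian. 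As indicated, the genuinely delicate points are the uniformity in $(\alpha,\hbar)$ and the analysis across the turning point, and this is exactly what is carried out in \cite{CST}.
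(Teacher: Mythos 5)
Your proposal is essentially identical to the paper's proof: the paper also defines $w$ by matching the action integrals $\int_1^w\sqrt{|1-v^{-2}|}\,dv=\int_{z_t}^z\sqrt{|\Omega(\alpha u;\hbar)-u^{-2}|}\,du$ across the turning point (equivalently the eikonal identity $(w')^2(1-w^{-2})=\Omega(\alpha z;\hbar)-z^{-2}$), and sets $V_2(w)=(w')^{-3/2}\partial_z^2(w')^{-1/2}=\frac34\frac{(w'')^2}{(w')^4}-\frac12\frac{w'''}{(w')^3}$, which is exactly $-\tfrac12(w')^{-2}\{w;z\}$. Both the paper and your proposal leave the uniform derivative bounds on $w$ and $V_2$ to \cite{CST}, so there is no discrepancy.
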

\begin{proof}
This is done by setting
\[
\frac{dw}{dz}:= \sqrt{\frac{z^{-2}-\Omega(\alpha z;\hbar)}{w^{-2}-1}}
\]
More precisely, with $z_t$ being the turning point defined by $z_t^{-2}-\Omega(\alpha z_t;\hbar)=0$, this means that 
\[
\int_{1}^{w} \sqrt{1-v^{-2}}\, dv = \int_{z_{t}}^{z} \sqrt{\Omega(\alpha u;\hbar)-u^{-2}}\,du
\]
provided $z>z_t$ and 
\[
\int^{1}_{w} \sqrt{-1+v^{-2}}\, dv = \int^{z_{t}}_{z} \sqrt{-\Omega(\alpha u;\hbar)+u^{-2}}\,du
\]
provided $0<z<z_{t}$. Note that $w\to0$ as $z\to0$.   The properties of $w$ stated above are now
shown by calculus. 
 The potential $V_2$ is given by
\[
V_2(w)= (w'(z))^{-\frac32}  \del_z^2  (w'(z))^{-\frac12} = \frac34 \frac{(w''(z))^2}{(w'(z))^4} - \frac12 \frac{w'''(z)}{(w'(z))^3}.
\]
We refer the reader to~\cite{CST} for further details. 
\end{proof}

We remark that the proof also shows that $w(z)=z+O(z^2)$ and $w'(z)=1+O(z)$ as $z\to0$. 
One now concludes the following concerning a basis of~\eqref{eq:tildeggleichung}. 
Let $\alpha:=\sqrt{\hbar^2/4+4E^2}$. 
Since \eqref{eq:psiz} is a rescaled form of~\eqref{eq:tildeggleichung}, one can now obtain a system of
fundamental solutions to the latter equation from a perturbative analysis of~\eqref{eq:wgleichung}. 
The modified Bessel functions $I_{i\nu}(z)$ and $K_{i\nu}(z)$, which are both analytic on $\C\setminus(-\infty,0]$, give rise to a fundamental
system of the homogeneous equation on the left-hand side of~\eqref{eq:wgleichung}. In our case $\nu=2\frac{E}{\hbar}$.  Recall the asymptotics
\begin{equation}\label{eq:Jnu}
I_{i\nu}(z)= \frac{(z/2)^{i\nu}}{\Gamma(i\nu+1)}(1 + O(z^2)) \quad z\to0.
\end{equation}
Note that for our purposes it suffices to consider real $z$.
Moreover, $I_{i\nu}(x)$ grows exponentially as $x\to\infty$,
whereas $K_{i\nu}(x)$ decays exponentially as $x\to\infty$. 

\begin{cor}
\label{cor:tildebasis}  Let $\alpha:=\sqrt{\hbar^2/4+4E^2}$ with $\hbar$ and $E>0$ small.
There exists a  fundamental system  of~\eqref{eq:tildeggleichung}, denoted by  $(\tilde g_{0},\tilde g_{1})$,  of the form 
\begin{align*}
\tilde g_{0}(y,E;\hbar) &= (w(z)/w'(z))^{\frac12} I_{2i\frac{E}{\hbar}} (\frac{w(z)}{\hbar_1}) (1+\hbar c_{1}(y,E;\hbar))   \\
\tilde g_{1}(y,E;\hbar) &= (w(z)/w'(z))^{\frac12} I_{-2i\frac{E}{\hbar}} (\frac{w(z)}{\hbar_1}) (1+\hbar c_{2}(y,E;\hbar))   
\end{align*}
where $w(z)=w(z,E;\hbar)$ is as in Lemma~\ref{lem:normform}, and with 
$z=\frac{y}{\alpha}$, $\hbar_1=\frac{\hbar}{\alpha}$. The 
$c_{j}$ satisfy  for all $k,\ell\ge0$, 
\[
|\del_{E}^{k} \del_{y}^{\ell} c_{j}(y,E;\hbar)|\le C_{k,\ell} \alpha^{-k}
\]
and all $0<y<y_0$. 
\end{cor}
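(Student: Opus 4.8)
The plan is to transport the perturbative analysis of the modified Bessel equation contained in Lemma~\ref{lem:normform} back to~\eqref{eq:tildeggleichung}, following~\cite{CST}. First I would rescale~\eqref{eq:tildeggleichung} to~\eqref{eq:psiz} via $y=\alpha z$, $\hbar_1=\hbar/\alpha$, and invoke Lemma~\ref{lem:normform}: $\varphi(w):=(w'(z,\alpha))^{1/2}\psi(z)$ solves~\eqref{eq:wgleichung} on $J_0(\alpha)$, so it suffices to produce a fundamental system of~\eqref{eq:wgleichung} and then undo the substitutions via $\tilde g(y)=(w'(z,\alpha))^{-1/2}\varphi(w(z,\alpha))$, $z=y/\alpha$. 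The algebraic point that makes the statement work is that $\varphi(w)=w^{1/2}u(w/\hbar_1)$ turns the homogeneous equation $-\hbar_1^2\varphi''+(1-w^{-2})\varphi=0$ into the modified Bessel equation of order $\mu$ with $\mu^2=\tfrac14-\hbar_1^{-2}=\tfrac14-\alpha^2/\hbar^2$; the normalization $\alpha=\sqrt{\hbar^2/4+4E^2}$ is precisely what forces $\mu^2=-4E^2/\hbar^2$, i.e.\ $\mu=i\nu$ with $\nu=2E/\hbar$. Hence $\varphi_0^{\pm}(w):=w^{1/2}I_{\pm i\nu}(w/\hbar_1)$ and $\varphi_0^{K}(w):=w^{1/2}K_{i\nu}(w/\hbar_1)$ solve the homogeneous part, and $W(I_\mu,K_\mu)(t)=-t^{-1}$ yields the clean Wronskian $W(\varphi_0^{+},\varphi_0^{K})=-1$. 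Undoing the two changes of variables turns the Frobenius behaviour $w^{1/2\pm i\nu}$ at the regular singular point $w=0$ — which in the original variable is exactly the normalization~\eqref{eq:tildegasymp} — into the prefactors $(w(z)/w'(z))^{1/2}\,I_{\pm i\nu}(w(z)/\hbar_1)$ of the statement, the factors $(1+\hbar c_j)$ absorbing the perturbation.

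Second, I would construct the perturbed solutions $\varphi^{\pm}$ with $\varphi^{\pm}\sim\varphi_0^{\pm}$ as $w\to0$ by the Volterra iteration built from the pair $(\varphi_0^{+},\varphi_0^{K})$. Since the leading coefficient of~\eqref{eq:wgleichung} is $-\hbar_1^2$ and $W(\varphi_0^{+},\varphi_0^{K})=-1$, the $\hbar_1^2$ in the forcing $\hbar_1^2 V_2\varphi$ cancels the $\hbar_1^{-2}$ of the Green's function and one is led to
\[
\varphi^{\pm}(w)=\varphi_0^{\pm}(w)+\int_0^w\big[\varphi_0^{+}(w')\varphi_0^{K}(w)-\varphi_0^{+}(w)\varphi_0^{K}(w')\big]\,V_2(w')\,\varphi^{\pm}(w')\,dw'.
\]
Both Frobenius exponents at $w=0$ have real part $\tfrac12$, so the integral term is $O(w)$ relative to $\varphi_0^{\pm}$; the iteration is therefore well posed and singles out the branch with the prescribed behaviour. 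To bound it I would divide the iterated kernel by $\varphi_0^{\pm}(w)$ and use (i) the monotonicity of $|I_{i\nu}|$ (nondecreasing) and $|K_{i\nu}|$ (nonincreasing) on $(0,\infty)$, which for $w'\le w$ controls $|\varphi_0^{+}(w')/\varphi_0^{+}(w)|\le1$ and $|\varphi_0^{K}(w)/\varphi_0^{+}(w)|$ and gives, via the Wronskian relation and the uniform Liouville--Green asymptotics of $I_{i\nu},K_{i\nu}$, the bound $|\varphi_0^{+}(w')\varphi_0^{K}(w')|\les\hbar_1\,w'\la w'\ra^{-1}$; and (ii) the estimate $|V_2(w,\alpha;\hbar)|\le C\la w\ra\,\alpha^3$ from Lemma~\ref{lem:normform}. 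Since $w$ runs over $(0,\alpha^{-1}w_0(\alpha))$ and $\hbar_1\,\alpha=\hbar$, the two factors $\la w'\ra$ cancel and the total kernel mass is $\les\hbar_1\,\alpha^3\int_0^{\alpha^{-1}w_0}w'\,dw'\simeq\hbar_1\,\alpha^3\,\alpha^{-2}=\hbar_1\,\alpha=\hbar$, so the Neumann series converges and $\varphi^{\pm}=\varphi_0^{\pm}(1+O(\hbar))$ uniformly on $J_0(\alpha)$ — this is the asserted form with $|c_j|\les1$.

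Third, the derivative bounds $|\del_E^k\del_y^\ell c_j|\le C_{k,\ell}\,\alpha^{-k}$ follow by differentiating the Volterra equation, using the bounds $|\del_\alpha^k\del_w^\ell w|\le C\la z\ra^{1+k}\alpha^{\ell}$ and $|\del_\alpha^k\del_w^\ell V_2|\le C\la w\ra^{1+k}\alpha^{3+\ell}$ of Lemma~\ref{lem:normform} together with standard parameter--derivative estimates for $I_{i\nu},K_{i\nu}$ of imaginary order. The key observation is that $\varphi^{\pm}/\varphi_0^{\pm}$ is an amplitude--type quantity: in the kernel the rapidly oscillating/exponential factor $I_{\pm i\nu}(w(z)/\hbar_1)$ appears in both numerator and denominator and drops out of the slowly varying product $\varphi_0^{+}\varphi_0^{K}$, so that differentiation in $E$ (which enters through $\del_E\alpha=4E/\alpha\les1$, through $z=y/\alpha$, and through $\nu=2E/\hbar$) produces no negative power of $\hbar$, while the residual rapidly varying factor $\varphi_0^{+}(w')\varphi_0^{K}(w)/(\varphi_0^{+}(w)\varphi_0^{K}(w'))$ is exponentially localized at scale $\hbar_1$ near $w'=w$, so differentiating under the integral and integrating the localizing factor converts the apparent loss into a single power of $\alpha^{-1}$ per $\del_E$; since $c_j\sim\hbar^{-1}(\varphi^{\pm}/\varphi_0^{\pm}-1)$ this is exactly $\alpha^{-k}$, and $\del_y$ costs nothing.

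The hard part is the uniformity underlying the last two steps: one must control the modified Bessel functions $I_{i\nu},K_{i\nu}$ of imaginary order $\nu=2E/\hbar$ on the long interval $w/\hbar_1\in(0,O(\hbar^{-1}))$ — which crosses the oscillatory region $w<1$, the turning point $w\simeq1$, and the exponentially growing/decaying region $w>1$ — and every estimate, and every $E$--derivative thereof, must hold uniformly over the whole two--parameter family of small $(E,\hbar)$, equivalently over $\alpha\in(0,\alpha_0]$ and $\hbar_1=\hbar/\alpha\in(0,2]$. This is precisely the delicate uniform Liouville--Green/WKB analysis of the modified Bessel basis carried out in~\cite{CST}, to which I would refer for the technical details.
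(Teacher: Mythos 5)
Your proposal is correct and follows essentially the same route as the paper's proof: reduce via Lemma~\ref{lem:normform} to the perturbed modified Bessel equation~\eqref{eq:wgleichung}, identify the homogeneous Bessel basis $w^{1/2}I_{\pm 2iE/\hbar}(w/\hbar_1)$ (your algebraic check that the normalization $\alpha=\sqrt{\hbar^2/4+4E^2}$ is precisely what forces order $2iE/\hbar$ is the point the paper leaves implicit), and iterate the Volterra equation for the correction factor $1+\hbar c_j$, deferring the uniform two-parameter estimates to~\cite{CST}. The one cosmetic difference is that the paper writes the Volterra kernel in reduction-of-order form $\phi_0(w)\int_u^w\phi_0^{-2}(v)\,dv$ whereas you use the equivalent two-solution form $\varphi_0^{+}(w')\varphi_0^{K}(w)-\varphi_0^{K}(w')\varphi_0^{+}(w)$ built from the $I_{i\nu},K_{i\nu}$ pair with Wronskian $-1$.
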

\begin{proof}
This follows from two  facts: (i) a basis of the homogeneous equation~\eqref{eq:wgleichung} is given by
\[
\phi_0(w,E;\hbar):= \sqrt{w}I_{2i\frac{E}{\hbar}}(\frac{w}{\hbar_1}),\quad \phi_1(w,E;\hbar):=\sqrt{w} 
I_{-2\frac{E}{\hbar}} (i\frac{w}{\hbar_1})
\]
and (ii): the equations for $c_{1,2}$ are contractive; in fact, they are given by the usual Volterra equation involving
the homogeneous basis and the potential $V_{2}$.    For $c_1$ one has (suppressing $E$ and $\hbar$ as arguments)
\[
c_1(w)=-\hbar_1^{-1}\int_0^w \int_u^w \phi_0^{-2}(v)\, dv \, V_2(u)\phi_0^2(u)(1+\hbar_1 c_1(u))\, du
\]
which implies the desired bounds on~$c_1$ via Lemma~\ref{lem:normform} and the well-known asymptotic
behavior of the modified Bessel   functions. For this see~\cite{CST}. 
\end{proof}

\subsubsection{The outgoing Jost solution towards the event horizon}
\label{subsubsec:jostleft}

From \eqref{eq:Jnu}, Lemma~\ref{lem:normform}, and Corollary~\ref{cor:tildebasis} we conclude that 
\[
\tilde g_1(y,E;\hbar) = \Big(\frac{y}{\alpha}\Big)^{\frac12} \frac{(iy/2\hbar)^{-2i\frac{E}{\hbar}}}{\Gamma(1-2i\frac{E}{\hbar})}+o(1)
\]
as $y\to0$.  In view of \eqref{eq:tildegasymp} this implies that the outgoing Jost solution is represented as
\begin{align*}
f_-(x,E;\hbar)&=\sqrt{\alpha}\frac{\Gamma(1-i\nu)}{(-i\hbar)^{i\nu}}\tilde g_1(y,E;\hbar)\\
&=
\frac{\Gamma(1-i\nu)}{(-i\hbar)^{i\nu}} (\alpha w(z)/w'(z))^{\frac12} I_{-i\nu} (\frac{\alpha w(z)}{\hbar}) (1+O(\hbar))
\end{align*}
for all $x\le x_0$ with $y=\alpha z=2e^{\frac{x}{2}}$ and $\nu=2\frac{E}{\hbar}$.  
In particular, using the standard asymptotic behavior of $I_{-i\nu}$, see~\cite{CST}, 
one obtains for $x=x_0$ that 
\begin{equation}\label{eq:f_-andder}
\begin{aligned}
f_-(x_0,E;\hbar)&=  \gamma_- \sqrt{\alpha} \, e^{\frac{1}{\hbar}(S_{-}(E;\hbar)+iT_{-}(E;\hbar) )} (1+O(e^{-\frac{2}{\hbar}S_-}))(1+O_\R(\hbar))\\
f_-'(x_0,E;\hbar)&=  \gamma_-' \sqrt{\alpha} \, \hbar^{-1}  e^{\frac{1}{\hbar}(S_{-}(E;\hbar)+iT_{-}(E;\hbar))}(1+O(e^{-\frac{2}{\hbar}S_-})) (1+O_\R(\hbar))
\end{aligned}
\end{equation}
with constants $|\gamma_-|\simeq |\gamma_-'|\simeq1$ depending on~$E,\hbar$, as well as a suitable action $S_-(E;\hbar)$ which is analytic for $|E|\les\hbar$
with $S_-(E;\hbar)>0$ for small real-valued $E$, and $T_-(E;\hbar)$ some  real-valued function of real~$E$ analytic on $|E|\les\hbar$. We remark that
$\frac{\gamma_-}{\gamma_-'}>0$, which is most important in Section~\ref{subsec:e00}.  Furthermore, each derivative in~$E$ costs at most a  power of~$\hbar^{-1}$. It is important
that one does not lose~$E^{-1}$ as in the $x\ge0$ case, but only $ O(\hbar^{-1})$ as such a loss is negligible compared to the size of~$ e^{\frac{1}{\hbar}S_{-}(E;\hbar)}$. 

\subsection{The Wronskian of the outgoing Jost solutions}

From Sections~\ref{subsubsec:jostright} and \ref{subsubsec:jostleft} it is now a simple matter to
determine the Wronskian between the outgoing Jost solutions.

\begin{lemma}
 \label{lem:Wronski}
Define
\[
 S(E;\hbar):=S_+(E;\hbar)+S_-(E;\hbar), \quad T(E;\hbar):=T_+(E;\hbar)+T_-(E;\hbar)
\]
 One has
\begin{equation}
 \label{eq:Wronski}
 W(f_+(\cdot,E;\hbar),f_-(\cdot,E;\hbar)) = \gamma_0 \, E\hbar^{-1} e^{\hbar^{-1}(S(E;\hbar)+iT(E;\hbar))}(1+O(\hbar))
\end{equation}
where $\gamma_0\ne0$ is an absolute constant,  and $|\del_E^k O(\hbar)| \le C_k\,\hbar E^{-k}$
for all $0<E\ll 1$, $0<\hbar\ll 1$ and $k\ge1$.
\end{lemma}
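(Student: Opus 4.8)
The Wronskian is independent of $x$, so the plan is to evaluate it at the single fixed point $x=x_0$ using the formulas~\eqref{eq:f_+andder} and~\eqref{eq:f_-andder} for $f_\pm$ and $f_\pm'$ derived in Sections~\ref{subsubsec:jostright} and~\ref{subsubsec:jostleft}. First I would write $W(f_+,f_-)(x_0) = f_+(x_0)f_-'(x_0) - f_+'(x_0)f_-(x_0)$ and substitute the four expressions. From the far-field side one has $f_+(x_0,E) = c_2\gamma\, q^{-1/4}(\zeta_0)\phi_2(\zeta_0,E)[1+O_\R(\hbar)+O(e^{-2\hbar^{-1}S_+})]$ and $f_+'(x_0,E) = c_2\gamma\, q^{1/4}(\zeta_0)\dot\phi_2(\zeta_0,E)[1+O_\R(\hbar)+O(e^{-2\hbar^{-1}S_+})]$, while from the horizon side $f_-(x_0,E) = \gamma_-\sqrt{\alpha}\,e^{\hbar^{-1}(S_-+iT_-)}(1+O(\hbar))$ and $f_-'(x_0,E) = \gamma_-'\sqrt{\alpha}\,\hbar^{-1}e^{\hbar^{-1}(S_-+iT_-)}(1+O(\hbar))$.

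\textbf{Key steps.} The dominant term is $f_+(x_0)f_-'(x_0)$, since $f_-'$ carries an extra $\hbar^{-1}$ relative to $f_-$. Collecting the explicit prefactors: $\gamma = -\sqrt{\pi}\,e^{i(T_+/\hbar+\pi/4)}E^{1/2}\hbar^{-1/6}$ from~\eqref{eq:f_+andder}, $c_2 = -i+O(\hbar)$ from~\eqref{eq:c1c2}, and $\phi_2(\zeta_0,E) = \pi^{-1/2}(\hbar^{-2/3}|\zeta_0|)^{-1/4}e^{\hbar^{-1}S_+}(1+O_\R(\hbar))$ with $\frac23|\zeta_0|^{3/2}=S_+$, so that the $E^{1/2}\hbar^{-1/6}\cdot(\hbar^{-2/3}|\zeta_0|)^{-1/4}$ combines with $\sqrt{\alpha}\,\hbar^{-1}$ — recalling $\alpha=\sqrt{\hbar^2/4+4E^2}$ and, for $E\gg\hbar$, $\sqrt{\alpha}\simeq E^{1/2}$ — to produce the stated $E\hbar^{-1}$ up to a nonzero absolute constant $\gamma_0$. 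The exponential factors multiply: $e^{iT_+/\hbar}\cdot e^{\hbar^{-1}S_+}\cdot e^{\hbar^{-1}(S_-+iT_-)} = e^{\hbar^{-1}(S+iT)}$ with $S=S_++S_-$, $T=T_++T_-$ as defined. One then checks that the subdominant term $f_+'(x_0)f_-(x_0)$ has the same exponential $e^{\hbar^{-1}(S+iT)}$ but is smaller by a relative factor $O(\hbar)$: indeed $\dot\phi_2$ also carries $\hbar^{-2/3}$ times a power factor, but the $\hbar^{-1}$ in $f_-'$ versus the absence of such a factor in $f_-$ makes $f_+'f_-/(f_+f_-')=O(\hbar)$ after the explicit ratios are compared. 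Finally, the error terms $O_\R(\hbar)$, $O(\hbar)$ and $O(e^{-2\hbar^{-1}S_+})$ all get absorbed into a single $O(\hbar)$ (note $e^{-2\hbar^{-1}S_+}\ll\hbar$ since $S_+>0$), and the derivative bounds $|\partial_E^k O(\hbar)|\le C_k\hbar E^{-k}$ follow from the corresponding bounds on each constituent factor established in Propositions~\ref{prop:AiryI}, \ref{prop:AiryII} and the remarks following~\eqref{eq:f_+andder} and~\eqref{eq:f_-andder} — here the far-field side is the bottleneck since it loses $E^{-1}$ per derivative, whereas the horizon side only loses $\hbar^{-1}$, which is harmless.

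\textbf{Main obstacle.} The delicate point is bookkeeping the powers of $E$, $\hbar$, and $\alpha$ so that the $q^{-1/4}(\zeta_0)$ factors, the $(\hbar^{-2/3}|\zeta_0|)^{\pm 1/4}$ factors, and $\sqrt{\alpha}$ all combine into exactly $E\hbar^{-1}$ with a constant $\gamma_0$ that is genuinely nonzero and $\hbar$-independent; one must verify the phases $e^{i\pi/4}$, the factor $c_2=-i$, and the sign of $W(\Ai,\Bi)=1/\pi$ conspire so that no cancellation occurs. A secondary subtlety is that $\alpha\simeq E^{1/2}$ requires $E\gtrsim\hbar$, but for $E\lesssim\hbar$ one has $\alpha\simeq\hbar^{1/2}$; in that regime $S_-$ is analytic in $E$ on $|E|\lesssim\hbar$ and the factor $E\hbar^{-1}$ in~\eqref{eq:Wronski} should be read as arising from the small-argument behavior of $I_{-i\nu}$ together with the $\Gamma(1-i\nu)$ normalization, and one checks that the formula remains uniformly valid there as well. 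I would isolate this low-energy sub-case and confirm, using~\eqref{eq:f_-andder} together with the reflection formula for $\Gamma$, that the claimed form of $W$ persists with the same $\gamma_0$.
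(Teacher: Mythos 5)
Your overall strategy matches the paper's (which just says ``this follows from~\eqref{eq:f_+andder} and~\eqref{eq:f_-andder}''), namely evaluate the Wronskian at $x_0$ and substitute the four boundary-value formulas. However, there is a genuine error in the bookkeeping of $\hbar$-powers that makes you skip the one structural point that actually requires verification.

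You assert that $f_+(x_0)f_-'(x_0)$ is the dominant term and that $f_+'(x_0)f_-(x_0)$ is smaller by a relative factor $O(\hbar)$. This is false: both terms are of the same order. Indeed, from the explicit form of $\phi_2,\dot\phi_2$ one has
\[
\frac{\dot\phi_2(\zeta_0)}{\phi_2(\zeta_0)}
\simeq -\hbar^{-\frac23}\cdot\frac{(\hbar^{-\frac23}|\zeta_0|)^{\frac14}}{(\hbar^{-\frac23}|\zeta_0|)^{-\frac14}}
= -\hbar^{-1}|\zeta_0|^{\frac12},
\]
so combined with the extra $q^{\frac12}(\zeta_0)$ in $f_+'(x_0)/f_+(x_0)$ one recovers exactly~\eqref{eq:f+quot}: $f_+'/f_+ = -d_1\hbar^{-1}(1+\dots)$. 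Thus $f_+'$ carries the same $\hbar^{-1}$ gain over $f_+$ that $f_-'$ carries over $f_-$, and the ratio $f_+'f_-/(f_+ f_-')$ is $O(1)$, not $O(\hbar)$. The $\hbar^{-2/3}$ you mention is only part of the story; the shift from $(\hbar^{-2/3}|\zeta_0|)^{-1/4}$ to $(\hbar^{-2/3}|\zeta_0|)^{1/4}$ supplies the remaining $\hbar^{-1/3}$.

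Because both terms are comparable, you must verify that they do not cancel, and this is exactly the content that your ``dominance'' claim lets you silently bypass. Fortunately the signs cooperate: writing $W = f_+ f_-\bigl(f_-'/f_- - f_+'/f_+\bigr)$, one has $f_-'/f_- \simeq (\gamma_-'/\gamma_-)\hbar^{-1}$ with $\gamma_-/\gamma_-'>0$ (as the paper emphasizes just after~\eqref{eq:f_-andder}) while $f_+'/f_+\simeq -d_1\hbar^{-1}$ with $d_1>0$, so the difference is $\simeq (\gamma_-'/\gamma_- + d_1)\hbar^{-1}>0$ and the two contributions reinforce rather than cancel. (This is the same sign information the paper later exploits in Section~\ref{subsec:e00} when it computes with $\alpha>0$ and $\beta<0$.) Once that is in place, your collection of the prefactors $E^{\frac12}\hbar^{-\frac16}(\hbar^{-\frac23}|\zeta_0|)^{-\frac14}\sqrt{\alpha}$ into $E\hbar^{-1}$, the combination of the exponentials into $e^{\hbar^{-1}(S+iT)}$, the absorption of $e^{-2\hbar^{-1}S_\pm}$ into $O(\hbar)$, and the derivative bounds in~$E$ all proceed as you describe.

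A minor remark on your last paragraph: the uniformity for $E\lesssim\hbar$ does not require a separate argument via the $\Gamma$-reflection formula as long as one uses~\eqref{eq:f_-andder} as stated, since the constants $\gamma_-,\gamma_-'$ and the analyticity of $S_-,T_-$ on $|E|\lesssim\hbar$ already encode that regime; the factor $E$ in~\eqref{eq:Wronski} originates entirely on the far-field side through $\gamma\propto E^{1/2}$ and $q^{-1/4}(\zeta_0)\propto E^{1/2}$ at leading order, not from $\sqrt{\alpha}$.
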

\begin{proof}
This follows from~\eqref{eq:f_+andder} and \eqref{eq:f_-andder}.
\end{proof}

Due to the growth of the action $S(E;\hbar)$ one can now conclude the following important size estimate on the Wronskian:
\begin{equation}
 \nn
|W(f_+(\cdot,E;\hbar),f_-(\cdot,E;\hbar)) | \simeq \hbar^{-1} E e^{\hbar^{-1}S(E;\hbar)}\gtrsim N  (\mu E)^{1-N}, \qquad N:=\hbar^{-1}
\end{equation}
for all $0<E<\eps$. More precisely, one uses that $S_+(E;\hbar)=-\log E+\alpha_0 + o(1)$ as $E\to 0+$ uniformly in small~$\hbar$,
whereas $S_-(E;\hbar)>0$ for small $E$. Then $\mu:= e^{-\alpha_0}$. In other words, the Wronskian blows up as $E\to0+$ as a power law
with large power since $\hbar$ is very small. 

\subsection{The spectral measure $e(E,x,x';\hbar)$ near the maximum of the potential}
\label{subsec:e00}

We now derive the contribution of energies $0<E<\eps$ to the
 desired pointwise decay of \eqref{eq:semievol} in time. We shall fix
  $x=x'=x_{0}$ since this case can be treated most easily from the previous sections; moreover, the region
  near the maximum of the potential is in some sense the most important one.
  The case of general $x,x'$ is considered in Section~\ref{subsec:eL2}.
First, one has
\begin{align*}
e(E;x_{0},x_{0};\hbar)&=\Im\Big[\frac{f_{-}(x_{0},E;\hbar)f_{+}(x_{0},E;\hbar)}{f_{+}(x_{0},E;\hbar)f_{-}'(x_{0},E;\hbar)-
f_{+}'(x_{0},E;\hbar)f_{-}(x_{0},E;\hbar)}  \Big]\\
&=   \Im\Big[\frac{f_{-}'(x_{0},E;\hbar)}{f_{-}(x_{0},E;\hbar)}-
\frac{f_{+}'(x_{0},E;\hbar)}{f_{+}(x_{0},E;\hbar)}  \Big]^{-1}\\
&= \hbar\Im\Big[\frac{\alpha(E;\hbar)}{1-\alpha(E;\hbar)\beta(E;\hbar)}\Big]
\end{align*}
where
\[
\alpha(E;\hbar):=\hbar^{-1}\frac{f_{-}(x_{0},E;\hbar)}{f_{-}'(x_{0},E;\hbar)},\qquad \beta(E;\hbar):=\hbar\frac{f_{+}'(x_{0},E;\hbar)}{f_{+}(x_{0},E;\hbar)}
\]
From \eqref{eq:f_-andder} one has\footnote{This $\alpha$ is not related to $\alpha$ appearing in Section~\ref{subsec:event}.}
\[
\alpha(E;\hbar):= d_{0}\,  [1+ O(e^{-2\hbar^{-1} S_-}  ) ]   (1+O_\R(\hbar))
\]
where $d_{0}>0$ is a  constant that depends on $x_{0},E,\hbar$ with $d_0\simeq1$. The $O(\hbar)$-terms in the numerator and denominator are not necessarily the same.
Similarly, from~\eqref{eq:f+quot}, with a constant $d_1>0$, 
\[
\beta(E;\hbar):= -d_1[1+O(e^{-2\hbar^{-1} S_+} )]   (1+O_\R(\hbar)) = -\tilde d_1(\hbar) [1+O(e^{-2\hbar^{-1} S_+} )]
\]
Due to the exponential decay of $V$ as $x\to-\infty$, the functions $f_-(x,E;\hbar)$ and $f_-'(x,E;\hbar)$ are analytic in~$E$ in a disk
$|E|\les \hbar$. In particular, $\alpha(E;\hbar)$ is analytic around $E=0$ in the same neighborhood. Moreover, due to
$f_-(x,E;\hbar)=\overline{f_-(x,-E;\hbar)}$, one checks that $\Re f_-(x_0,E;\hbar)$ and  $\Im f_-(x_0,E;\hbar)$ are even
and odd in~$E$, respectively. Thus, it follows that $\Im\alpha(E;\hbar)$ is odd in~$E$, whereas $\Re\alpha(E;\hbar)$ is even. Moreover, for any $k,n\ge0$,
\begin{equation}
\label{eq:alphaderbds}
|\del_E^k \Im\alpha(E;\hbar)|\le C_{k,n} \hbar^n
\end{equation}
which follows from the fact that $S_-(E;\hbar)>0$ uniformly in~$-\eps<E<\eps$ as well as the differentiability properties of $S_-(E;\hbar)$ in~$E$, see~\cite{CST}. 
In view of these properties,
\begin{align}
e(E;x_{0},x_{0};\hbar)&=
\hbar\Im\Big[\frac{\alpha(E;\hbar)}{1+\tilde d_1\alpha(E;\hbar)-\alpha(E;\hbar)(\tilde d_1+\beta(E;\hbar))}\Big]\nn \\
&=  \hbar\Im\Big[\frac{\alpha(E;\hbar)}{1+ \tilde d_1\alpha(E;\hbar)}\Big]+ O_\R\big( e^{-2\hbar^{-1} S_+(E;\hbar)} \big) \label{eq:ImplusO}
\end{align}
Since $S_+(E;\hbar)\sim -\log E$ as $E\to0+$,
\[
O\big( e^{-2\hbar^{-1} S_+(E;\hbar)} \big) = O(E^{N}),\quad N=\hbar^{-1}
\]
Moreover, the imaginary part in~\eqref{eq:ImplusO}, i.e.,
\[
\eta(E;\hbar):=\Im\Big[\frac{\alpha(E;\hbar)}{1+\tilde d_1\alpha(E;\hbar)}\Big]
\]
 is an odd function in~$E$ (and analytic near $E=0$) and it satisfies the bounds
\[
|\del_E^k \eta(E;\hbar)|\le C_{k,n} \hbar^n
\]
cf.~\eqref{eq:alphaderbds}, and $\del_E^k \eta(0;\hbar)=0$ for even~$k$.
It is now easy to bound~\eqref{eq:semievol}: for any $n\ge0$, and all $t\ge0$, and any $0\le k\ll \hbar^{-1}$,
\begin{align*}
& \hbar^{-2} \Big|\int_0^\infty \sin\big(\hbar^{-1}tE\big) \Im \Big[
\frac{f_{+}(x_0,E;\hbar)f_{-}(x_0,E;\hbar)}{W(f_{+}(x_0,E;\hbar),f_{-}(x_0,E;\hbar))}
\Big] \chi_\eps(E)\,dE  \Big|\\
&\les  C_{k,n} \,\hbar^n\la t\ra^{-k}
\end{align*}
by integrating by parts (here $\chi_\eps$ is a smooth localizer to energies $E<\eps$). In other words, by taking $\hbar$ sufficiently
small one can achieve any rate of decay. Moreover, we note the important property that small energies do not present any kind of
obstruction to the problem  of summing in the angular momentum~$\ell$; in fact, the contributions of low lying energies to the decay
estimates decay rapidly in~$\ell$.

\subsection{The weighted $L^2$ bound on the spectral measure}
\label{subsec:eL2}

Here we generalize the analysis of Section~\ref{subsec:e00} to allow for general $x,x'$. More precisely,
we claim the following result which is a routine application of the basis representations which we have obtained above, cf.~
Section 8 in \cite{DSS}.

\begin{lemma}
 \label{lem:eL2}
 Let $0\le M\ll \hbar^{-1}$.
The spectral measure as defined in~\eqref{eq:esemiclass} satisfies the bounds,
\begin{equation}
 \label{eq:eL2}
\sup_{0<E<\eps} \| \la x\ra^{-k-\frac12-} \del_E^k e(E,x,x';\hbar)  \la x'\ra^{-k-\frac12-} \|_{L^2_{x,x'}} \le C_{k,n} \hbar^n
\end{equation}
for any $n\ge0$ and any $0\le k\le M$. Moreover, for any choice of $x,x'\in\R$, one has the property
\begin{equation}
 \label{eq:oddder}
\lim_{E\to0+} \del_E^{2j}e(E,x,x';\hbar)  =0
\end{equation}
for any $0\le j\le \frac{M}{2}$.
\end{lemma}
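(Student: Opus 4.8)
The plan is to derive Lemma~\ref{lem:eL2} by writing the spectral measure $e(E,x,x';\hbar)$ explicitly in terms of the Jost solution bases constructed in Sections~\ref{subsec:farfield} and~\ref{subsec:event}, and then tracking how the large Wronskian and the uniform bounds on the basis functions conspire to produce the gain $\hbar^n$. First I would recall from~\eqref{eq:esemiclass} that
\[
e(E,x,x';\hbar)=\Im\Big[\frac{f_+(x,E;\hbar)\,f_-(x',E;\hbar)}{W(f_+(\cdot,E;\hbar),f_-(\cdot,E;\hbar))}\Big]\qquad(x'<x),
\]
and by Lemma~\ref{lem:Wronski} the Wronskian equals $\gamma_0 E\hbar^{-1}e^{\hbar^{-1}(S(E;\hbar)+iT(E;\hbar))}(1+O(\hbar))$ with $S(E;\hbar)=S_+(E;\hbar)+S_-(E;\hbar)\sim-\log E$ as $E\to0+$, uniformly in small~$\hbar$. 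Thus $|W|^{-1}\simeq \hbar E^{-1}e^{-\hbar^{-1}S(E;\hbar)}$, which is smaller than $\hbar\, E^{\hbar^{-1}-1}e^{-\hbar^{-1}\alpha_0}$ for $E<\eps$; this is the decisive smalling factor, and after $k$ differentiations in~$E$ (each of which, by the bounds in Propositions~\ref{prop:AiryI}, \ref{prop:AiryII}, Corollary~\ref{cor:tildebasis} and Lemma~\ref{lem:Wronski}, costs at most a power of $E^{-1}$ or $\hbar^{-1}$) one still has a factor $E^{\hbar^{-1}-1-k}$, hence $\le C_{k,n}\hbar^n$ for every~$n$ once $k\ll\hbar^{-1}$, because $\sup_{0<E<\eps}E^{\hbar^{-1}-1-k}|\log E|^{k}$ is super-polynomially small in~$\hbar$.

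The next step is to supply the $\la x\ra$ and $\la x'\ra$ weights and convert pointwise-in-$(x,x')$ bounds into the $L^2_{x,x'}$ bound in~\eqref{eq:eL2}. For $x,x'\ge x_0$ I would use the representation~\eqref{eq:f+psi2} of $f_+$ in terms of $\psi_2(\zeta,E;\hbar)$ together with the asymptotics~\eqref{eq:Airyasymp}, which show that in the classically allowed region $|f_+(x,E;\hbar)|\les E^{-\frac12}q^{-\frac14}(\zeta)$, i.e.\ $f_+$ is bounded by a constant times an oscillatory factor with an $\la x\ra$-type envelope that is square-integrable against the weight $\la x\ra^{-k-\frac12-}$; in the classically forbidden region $f_+$ decays. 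The same analysis applies to $f_-$ via Corollary~\ref{cor:tildebasis} and the $I_{-i\nu}$-asymptotics, using that near $x=x_0$ (and leftward) $f_-$ grows like $\sqrt{\alpha}\,e^{\hbar^{-1}S_-}$ which is exactly cancelled in the quotient $f_+f_-/W$ by the $e^{\hbar^{-1}S_-}$ in $W$. The $k$ derivatives in~$E$ hitting $e$ are distributed by Leibniz among $f_+$, $f_-$ and $W^{-1}$; each differentiated factor obeys the stated derivative bounds with a loss of at most $E^{-1}$ (from the far field, Propositions~\ref{prop:AiryI}--\ref{prop:AiryII}) or $\hbar^{-1}$ (from the event-horizon side, by the remark after~\eqref{eq:f_-andder}), the crucial point being that the far-field $E^{-1}$ losses are swamped by $E^{\hbar^{-1}}$, while the weight $\la x\ra^{-k-\frac12-}\la x'\ra^{-k-\frac12-}$ absorbs the $\la x\ra^k$, $\la x'\ra^k$ envelopes produced by the $k$ $\del_E$'s acting on the phases $\zeta(x,E)$, etc. Squaring and integrating in $x,x'$ then gives a finite $x,x'$-integral times the super-polynomially small $E$-factor, uniformly in $0<E<\eps$; this proves~\eqref{eq:eL2}.

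Finally, for the vanishing statement~\eqref{eq:oddder}, I would argue exactly as in Section~\ref{subsec:e00}: since $V$ decays exponentially as $x\to-\infty$, the Jost solution $f_-(x,E;\hbar)$ and its derivative are analytic in $E$ on a disk $|E|\les\hbar$ and satisfy the reflection symmetry $f_-(x,E;\hbar)=\overline{f_-(x,-E;\hbar)}$; the same symmetry $f_+(x,E;\hbar)=\overline{f_+(x,-E;\hbar)}$ holds by the reality of the potential. Hence $\Re f_\pm$ and $\Im f_\pm$ are even and odd in $E$ respectively, $W(f_+,f_-)$ has the analogous parity, and therefore the function
\[
E\longmapsto e(E,x,x';\hbar)=\Im\Big[\frac{f_+(x,E;\hbar)f_-(x',E;\hbar)}{W(f_+,f_-)}\Big]
\]
extends to an odd analytic function of $E$ near $E=0$ (this is where one uses that the branch issues of $f_+$ in $E$ near $0$, coming from the $E^{\frac12}$ prefactor and the $T_+(E)/\hbar$ phase, cancel in the quotient up to the $O(e^{-2\hbar^{-1}S_+})=O(E^N)$ terms already isolated in~\eqref{eq:ImplusO}, which vanish to high order at $E=0$ and contribute nothing to finitely many derivatives). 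An odd function has $\del_E^{2j}e(0,x,x';\hbar)=0$ for all $j$, which gives~\eqref{eq:oddder} for $0\le j\le M/2$.

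I expect the main obstacle to be the bookkeeping in the second step: one must verify that in every term of the Leibniz expansion of $\del_E^k\big(f_+f_-/W\big)$ the potentially dangerous $E^{-1}$ losses from the far-field basis are genuinely dominated by the $E^{\hbar^{-1}}$ decay coming from $S_+(E;\hbar)\sim-\log E$, \emph{uniformly down to $E=0$ and uniformly in small $\hbar$}, while simultaneously the polynomial-in-$\la x\ra,\la x'\ra$ growth generated by differentiating the WKB phases stays integrable against the prescribed weights — this is precisely the ``simultaneously for all energies near zero and all small $\hbar$'' difficulty emphasized in the introduction, and it is handled by the uniform derivative estimates of~\cite{CSST} and~\cite{CST} quoted in Propositions~\ref{prop:AiryI}, \ref{prop:AiryII} and Corollary~\ref{cor:tildebasis}. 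The proof itself, given those inputs, is then the routine argument indicated above and parallels Section~8 of~\cite{DSS}.
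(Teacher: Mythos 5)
Your proposal correctly identifies the ingredients (Jost-solution bases, Wronskian lower bound, parity of $f_-$) and correctly mirrors the parity argument for~\eqref{eq:oddder}, but the mechanism you cite for the $\hbar^n$-gain in~\eqref{eq:eL2} is wrong, and the mistake is not merely cosmetic.

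You attribute the smallness to $|W|^{-1}\simeq\hbar E^{-1}e^{-\hbar^{-1}S(E;\hbar)}$ being a ``decisive smalling factor.'' But for $x,x'$ near $x_0$ (which is precisely the region that the weights $\la x\ra^{-k-\frac12-}$ do \emph{not} suppress) the exponential smallness of $|W|^{-1}$ is exactly cancelled: by~\eqref{eq:f_+andder} one has $|f_+(x_0,E;\hbar)|\simeq E^{\frac12}e^{\hbar^{-1}S_+}$ (the $\Bi$-envelope grows like $e^{\frac23\hbar^{-1}|\zeta_0|^{\frac32}}=e^{\hbar^{-1}S_+}$ in the classically forbidden region), and by~\eqref{eq:f_-andder} one has $|f_-(x_0,E;\hbar)|\simeq\sqrt\alpha\,e^{\hbar^{-1}S_-}$. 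Hence
$|f_+(x_0)f_-(x_0)/W|\simeq\hbar\sqrt\alpha\,E^{-\frac12}$, which is at best $O(\hbar)$ --- not $O(\hbar^n)$ for every~$n$. Your ``squaring and integrating $\cdots$ gives a finite $x,x'$-integral times the super-polynomially small $E$-factor'' is therefore false: there is no leftover super-polynomially small factor once you account for the forbidden-region growth of~$f_+$.

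The actual source of the $\hbar^n$-gain is the \emph{imaginary-part structure} that Section~\ref{subsec:e00} isolates, and which your write-up relegates to the parity discussion alone. Concretely, one writes $e=\hbar\,\Im\big[\alpha/(1-\alpha\beta)\big]$ with $\alpha=\hbar^{-1}f_-/f_-'|_{x_0}$, $\beta=\hbar f_+'/f_+|_{x_0}$, and uses: (i) $\alpha$ and $\beta$ are real up to $O(e^{-2\hbar^{-1}S_-})$ and $O(e^{-2\hbar^{-1}S_+})$ corrections respectively (this is~\eqref{eq:f+quot} and~\eqref{eq:f_-andder}); (ii) $S_->0$ uniformly on $(0,\eps)$ gives $e^{-2\hbar^{-1}S_-}\le C_n\hbar^n$ for every~$n$, whence the bound~\eqref{eq:alphaderbds} on $\Im\alpha$ and its derivatives; (iii) the residual $O(e^{-2\hbar^{-1}S_+})=O(E^N)$ ($N=\hbar^{-1}$) piece handles both the supremum in~$E$ and all $k\ll\hbar^{-1}$ derivatives. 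The modulus $|e|$ is not super-polynomially small; the reality of $f_\pm$ modulo exponentially small corrections is what makes $\Im$ super-polynomially small. The generalization to arbitrary $x,x'$ (Section~\ref{subsec:eL2}) reuses exactly this decomposition together with the explicit bases of Propositions~\ref{prop:AiryI}--\ref{prop:AiryII} and Corollary~\ref{cor:tildebasis}, as in~\cite{DSS}, Section~8; the weight exponent $-k-\tfrac12-$ then controls the polynomial-in-$x$ envelopes coming from the $q^{-\frac14}$ factors and from $k$ applications of $\partial_E$ to the action phases. Your proof should be rewritten to make the $\Im$-cancellation the central mechanism of~\eqref{eq:eL2} rather than a side issue used only for~\eqref{eq:oddder}.
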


\subsection{The decay estimate for small energies}
\label{subsec:dispest_low}

It is now a simple matter to establish the desired decay estimate
for~\eqref{eq:semievol} for small energies.

\begin{prop}
\label{prop:displow}
Let $0\le M\ll\hbar^{-1}$ be given. Then for any $n\ge0$, $0\le k\le M$, and $t\ge0$
one has the bounds
\begin{align*}
 \Big\|\la x\ra^{-k-\frac12-}\int_0^\infty \sin\big(\hbar^{-1}tE\big) e(E,x,x';\hbar) \chi_\eps(E)\,dE \la x'\ra^{-k-\frac12-}
  \Big \|_{2\to2}
\le  C_{k,n} \,\hbar^n\la t\ra^{-k} \\
\Big \|\la x\ra^{-k-\frac12-}\int_0^\infty \cos\big(\hbar^{-1}tE\big) e(E,x,x';\hbar) \chi_\eps(E)\,EdE \la x'
\ra^{-k-\frac12-} \Big \|_{2\to2}
\le  C_{k,n} \,\hbar^n\la t\ra^{-k}
\end{align*}
uniformly in $0\le \hbar\le \hbar_0$.
\end{prop}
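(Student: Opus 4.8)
The plan is to reduce both estimates to the weighted $L^2$ bound on the spectral measure from Lemma~\ref{lem:eL2} via repeated integration by parts in the energy variable~$E$. For the sine evolution, write $\sin(\hbar^{-1}tE)=-\hbar t^{-1}\del_E\cos(\hbar^{-1}tE)$, so that integrating by parts transfers one $\del_E$ onto the amplitude $e(E,x,x';\hbar)\chi_\eps(E)$ at the cost of a factor $\hbar t^{-1}$. After $k$ such steps one picks up $(\hbar/t)^k$ and an integral of the form $\int_0^\infty \{\cos,\sin\}(\hbar^{-1}tE)\,\del_E^k[e(E,x,x';\hbar)\chi_\eps(E)]\,dE$ (possibly with the $\cos$ and $\sin$ roles swapped depending on the parity of $k$). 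The boundary terms at $E=0$ vanish: the ones coming from an odd number of differentiations hitting $\sin(\hbar^{-1}tE)$ vanish because $\sin(0)=0$, while those from even numbers of differentiations vanish by the parity property~\eqref{eq:oddder}, i.e.\ $\del_E^{2j}e(E,x,x';\hbar)\to0$ as $E\to0+$ for $2j\le M$; the boundary contributions at $E=\eps$ vanish because $\chi_\eps$ is supported in $E<\eps$. The Leibniz rule distributes the $k$ derivatives among $e$ and $\chi_\eps$, and each resulting term is estimated in the operator norm on $L^2(\R;L^2(S^2))$ by passing to the Hilbert--Schmidt norm: one bounds $\|\la x\ra^{-k-\frac12-}\del_E^{j}e(E,x,x';\hbar)\la x'\ra^{-k-\frac12-}\|_{L^2_{x,x'}}$ using~\eqref{eq:eL2} (noting that $\la x\ra^{-k-\frac12-}\le\la x\ra^{-j-\frac12-}$ for $j\le k$, so the weights in the statement are more than sufficient), then integrates the bound $C_{k,n}\hbar^n$ over the compact $E$-support of $\chi_\eps$, which contributes only a harmless constant.

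For the cosine evolution the argument is identical except that the extra factor of $E$ in the integrand, $\cos(\hbar^{-1}tE)\,e(E,x,x';\hbar)\,E\,\chi_\eps(E)$, is absorbed into the amplitude; since $E\chi_\eps(E)$ is smooth with all derivatives bounded on the compact support, the Leibniz rule produces the same kind of terms and one again lands on $(\hbar/t)^k$ times an integral of $\del_E^j$ of a weighted spectral measure, controlled by Lemma~\ref{lem:eL2}. The only subtlety is that one must check the boundary term at $E=0$ still vanishes: with the extra factor of $E$ the amplitude is $O(E)$, hence vanishes at $E=0$ together with its first $M$ derivatives in the relevant parity (or simply because the amplitude is now odd-times-even and $E\mapsto E\,e(E,\cdot)$ inherits the needed vanishing from~\eqref{eq:oddder}).

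To handle short times $t\le1$ uniformly, one does not integrate by parts at all but estimates directly: $\|\la x\ra^{-k-\frac12-}\int_0^\infty \{\sin,\cos\}(\hbar^{-1}tE)\,e(E,x,x';\hbar)\,\{1,E\}\,\chi_\eps(E)\,dE\,\la x'\ra^{-k-\frac12-}\|_{2\to2}$ is bounded by $\int_0^\eps \sup_E\|\la x\ra^{-\frac12-}e(E,x,x';\hbar)\la x'\ra^{-\frac12-}\|_{L^2_{x,x'}}\,dE\le C_n\hbar^n$ by the $k=0$ case of~\eqref{eq:eL2}, and this is $\simeq C_n\hbar^n\la t\ra^{-k}$ for $t\le1$. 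Splicing the two regimes gives the claimed bound for all $t\ge0$. The constant is uniform in $0\le\hbar\le\hbar_0$ precisely because Lemma~\ref{lem:eL2} is uniform there, and the restriction $0\le k\le M\ll\hbar^{-1}$ is exactly what makes all the integration-by-parts steps and the parity cancellations legitimate.

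The main obstacle is purely bookkeeping rather than conceptual: one must verify carefully that every boundary term generated at $E=0$ during the iterated integration by parts actually vanishes, tracking which terms carry a $\sin$ factor (killed by $\sin(0)=0$) and which carry an even-order $E$-derivative of the spectral measure (killed by~\eqref{eq:oddder}), and in the cosine case accounting correctly for the extra factor of $E$. Once this ledger is in order, the estimate follows immediately from Lemma~\ref{lem:eL2}. Hence the proof is, as the text says, ``a simple matter.''
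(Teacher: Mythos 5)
Your proof is correct and follows the same route the paper takes: the paper's own proof is the single sentence ``This follows directly from Lemma~\ref{lem:eL2} by repeated integrations by parts,'' and you have filled in precisely that bookkeeping -- the alternation between $\sin(0)=0$ and the parity vanishing~\eqref{eq:oddder} at $E=0$, the Leibniz distribution of derivatives, the reduction from operator norm to Hilbert--Schmidt norm, the remark that the weights $\la x\ra^{-k-\frac12-}$ dominate the $\la x\ra^{-j-\frac12-}$ needed for $j\le k$ derivatives, and the direct estimate for $t\le1$. Nothing to add.
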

\begin{proof}
This follows directly from Lemma~\ref{lem:eL2} by repeated integrations by parts.
\end{proof}

We remark that these estimates immediately transfer to $L^1\to L^\infty$ bounds by means
of Bernstein's inequality. In fact, one can also establish the following result with the optimal $\la x\ra^{-k}$ weights by means of a more careful
treatment of the oscillatory integrals as in~\cite{DSS}.  

\begin{prop}
\label{prop:displow2}
Let $0\le M\ll\hbar^{-1}$ be given. Then for any $n\ge0$, $0\le k\le M$, and $t\ge0$
one has the bounds
\begin{align*}
 \Big\|\la x\ra^{-k}\int_0^\infty \sin\big(\hbar^{-1}tE\big) e(E,x,x';\hbar) \chi_\eps(E)\,dE \la x'\ra^{-k}
  \Big \|_{1\to\infty}
\le  C_{k,n} \,\hbar^n\la t\ra^{-k} \\
\Big \|\la x\ra^{-k}\int_0^\infty \cos\big(\hbar^{-1}tE\big) e(E,x,x';\hbar) \chi_\eps(E)\,EdE \la x'
\ra^{-k} \Big \|_{1\to\infty}
\le  C_{k,n} \,\hbar^n\la t\ra^{-k}
\end{align*}
uniformly in $0\le \hbar\le \hbar_0$.
\end{prop}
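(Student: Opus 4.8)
\textbf{Proof proposal for Proposition~\ref{prop:displow2}.} The plan is to upgrade the $L^2\to L^2$ bounds of Proposition~\ref{prop:displow} to $L^1\to L^\infty$ bounds with the sharp weights $\la x\ra^{-k}$, $\la x'\ra^{-k}$. First I would note that since we only care about the regime $0\le\hbar\le\hbar_0$ with $\hbar\sim\ell^{-1}$ and since we are free to extract any negative power of~$\hbar$ on the right-hand side, the argument can afford to be lossy in~$\hbar$. The key point is therefore to establish the $\la t\ra^{-k}$ decay with weights $\la x\ra^{-k}$ rather than $\la x\ra^{-k-\frac12-}$, i.e., to recover the half power plus epsilon that was spent in Lemma~\ref{lem:eL2} to make the $L^2_{x,x'}$ norm finite. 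As indicated in the text, this is precisely the kind of oscillatory-integral analysis carried out in Section~8 of~\cite{DSS}: one does not estimate the spectral measure in a single $L^2$ norm but instead tracks the pointwise behavior of the Jost solutions $f_\pm(x,E;\hbar)$ in the three spatial regimes (left of $x_0$, the matching region near the barrier, and the far field $x\ge x_0$), and exploits the oscillation of $f_+$ for $x$ to the right of the turning point to gain the extra decay in~$x$.

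The main steps, in order, are as follows. (1) Split the $x,x'$ integration according to whether each variable lies in $(-\infty,x_0]$ or $[x_0,\infty)$; by symmetry of the spectral measure under $x\leftrightarrow x'$ it suffices to treat the four cases. (2) For $x\le x_0$, use the representation of $f_-$ from~\eqref{eq:f_-andder} together with Corollary~\ref{cor:tildebasis}: there $f_-$ has no oscillation that helps, but the exponential decay of the potential forces $|f_-(x,E;\hbar)/f_-(x_0,E;\hbar)|$ to decay like $e^{-c|x|/\hbar}$, which far outperforms any polynomial weight; so the $x\le x_0$ contributions produce $O(\hbar^n)$ with no weight needed at all. (3) For $x\ge x_0$, use \eqref{eq:f+psi2} together with Propositions~\ref{prop:AiryI}--\ref{prop:AiryII}; one must distinguish $x_0\le x\le x_1(E;\hbar)$ (to the left of the turning point, where $f_+$ is roughly a decaying exponential $\sim e^{-\hbar^{-1}\int\sqrt{V_0-E^2}}$ and again beats the weight) from $x\ge x_1(E;\hbar)$, where $\psi_2(\zeta,E;\hbar)$ is oscillatory. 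In the oscillatory region one writes, schematically, $f_+(x,E;\hbar)\sim E^{1/2}q^{-1/4}e^{i\hbar^{-1}\Phi(x,E)}(1+\hbar b_2)$ with $\del_E\Phi$ comparable to $x_1(E;\hbar)$ and, more importantly, to the length of the classically allowed region so that $|\del_E\Phi(x,E)|\gtrsim \la x\ra$ for $x$ large. (4) Substitute all of this into~\eqref{eq:esemiclass} and~\eqref{eq:semievol}/\eqref{eq:sineevol}, divide by the Wronskian~\eqref{eq:Wronski}, and integrate by parts $k$ times in~$E$ against the oscillatory factor $\sin(\hbar^{-1}tE)$ (or $\cos$). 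Each integration by parts yields a factor $\hbar\la t\ra^{-1}$ from the $E$-derivative hitting the exponential, at the cost of derivatives landing on the amplitude (which cost only powers of~$\hbar^{-1}$ by Lemma~\ref{lem:eL2} and the stated $E$-derivative bounds) or on the phase $\Phi$; the latter produces the factors $\la x\ra$, $\la x'\ra$ that, when combined with the weights $\la x\ra^{-k}\la x'\ra^{-k}$, are harmless. (5) Bound the resulting integrand in $L^\infty_{x,x'}$ and in $L^1_E$ on the support of $\chi_\eps$, which is a bounded interval, so no decay in~$E$ is required; collect the $\hbar$-powers into $C_{k,n}\hbar^n$.

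The main obstacle I anticipate is the matching region near the turning point $x\sim x_1(E;\hbar)$ and, more delicately, the joining of the far-field analysis to the barrier analysis near $x\sim\xphot$: there $f_+$ transitions from decaying to oscillatory, the phase $\Phi$ has a stationary point in~$x$ there (the turning point), and one must check that differentiating in~$E$ does not generate unbounded factors. This is handled in~\cite{CSST} by the Airy uniformization (Propositions~\ref{prop:AiryI} and~\ref{prop:AiryII}), whose whole purpose is to make the matching uniform in $\hbar$ and $E$; the derivative bounds \eqref{eq:aj_est}, \eqref{eq:bj_est} show that in the Airy scale $\tau=-\hbar^{-2/3}\zeta$ only negative powers of~$\hbar$ (never of~$E$ in a dangerous way, since $\del_E$ of the amplitude loses $E^{-1}$ but this is compensated against $S_+\sim|\log E|$ exactly as in Section~\ref{subsec:e00}) appear. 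A secondary technical annoyance is carefully reading off the lower bound $|\del_E\Phi(x,E)|\gtrsim\la x\ra$, uniformly down to $E\to0+$; this uses that $T_+(E;\hbar)\to T_+(0;\hbar)$ is finite while $x_1(E;\hbar)\to\infty$, and is again exactly the computation behind the weighted estimates of Section~8 of~\cite{DSS}. Since all of these ingredients are already in place, the proof amounts to bookkeeping, and I would present it as such with a reference to~\cite{DSS} for the unchanged oscillatory-integral estimates.
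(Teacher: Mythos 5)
Your proposal follows the same route the paper intends: the paper gives no written proof of Proposition~\ref{prop:displow2}, only the preceding remark that the optimal $\la x\ra^{-k}$ weights are obtained ``by means of a more careful treatment of the oscillatory integrals as in~\cite{DSS}'', and your outline --- spatial case splitting according to the Jost-solution representations, integration by parts in~$E$ against $\sin(\hbar^{-1}tE)$, and the lower bound $|\del_E\Phi|\gtrsim\la x\ra$ in the oscillatory region to absorb the sharp weights --- is precisely that Section~8-of-\cite{DSS} argument transplanted to the semiclassical setting. Your writeup is more detailed than anything the paper itself supplies for this statement, and the steps you identify as delicate (the turning-point matching and the uniformity of $\del_E\Phi$ down to $E\to0+$) are the right ones.
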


We call the reader's attention to the fact that these bounds {\em decay rapidly} with the angular momentum $\ell\simeq \hbar^{-1}$. In other
words, energies (in the original formulation of the Regge-Wheeler equation) of size $\ll \ell^{2}$ do {\em not present any obstruction} to the summation in~$\ell$. 
This is a reflection of the expectation that any such obstruction should result from the local behavior of the potential around the 
maximum due to complex resonances. In the related context of the
surfaces of revolutions, this corresponds to the principle that the growth of the constants $C(\ell)$ in the decay estimates of~\cite{SSS2} is determined by the local
geometry of the manifold rather than its asymptotic behavior at the ends. In particular, if the surface contains a large trapping set (such as an equatorial section of a sphere) then the constants grow exponentially in~$\ell$, 
rendering summation impossible. 

\section{Energies close to the top, Mourre and Sigal-Soffer estimates}
\label{eq:topscattering}

For  energies in the range $\eps<E<100$ we establish a
Mourre estimate which then allows us to invoke the semiclassical Sigal-Soffer type decay bounds of Section~\ref{sec:HSS}. Thus,
let $p:= -i\hbar \del_x$, $H:=p^2+V$ as above, and
$A:=\frac12(px+xp)$. Note that the Mourre estimate is shown here to hold in a
neighborhood of a {\em trapping energy} (namely, $E=1$). For notational convenience, we 
shift the location of the maximum to $\xphot=0$ in this section.

\begin{lemma}
\label{lem:Mourre}
 For $\eps>0$ and $\hbar$ small, there exists a fixed constant $c_0>0$ so that
\begin{equation}\label{eq:Mourre}
 \chi_I(H) \frac{i}{\hbar} [H,A]\chi_I(H) \ge c_0 \hbar \chi_I(H)
\end{equation}
where $\chi_I$ is the indicator of $I:=[\eps/2,100]$.
\end{lemma}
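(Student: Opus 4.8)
The plan is to prove the Mourre estimate \eqref{eq:Mourre} by splitting the energy window $I$ into a piece bounded away from the trapping energy $E=1$, where the classical commutator $\frac{i}{\hbar}[H,A]$ is positive by a standard virial-type argument, and a small neighborhood of $E=1$, where positivity of the commutator fails and must be recovered from the nondegeneracy of the maximum together with the uncertainty principle. First I would compute the commutator explicitly: with $p=-i\hbar\del_x$, $H=p^2+V$, $A=\tfrac12(px+xp)$, one has $\frac{i}{\hbar}[H,A]=2p^2-xV'(x)=2(H-V)-xV'(x)=2H-(2V+xV')$. So \eqref{eq:Mourre} is equivalent to $\chi_I(H)\big(2H-2V-xV'\big)\chi_I(H)\ge c_0\hbar\,\chi_I(H)$, and since $\chi_I(H)\,2H\,\chi_I(H)\ge \eps\,\chi_I(H)$, the whole point is to control the multiplication operator $g(x):=2V(x)+xV'(x)$ from above on the relevant spectral subspace.

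Next I would localize in $x$. Because $V$ has a unique nondegenerate maximum at $\xphot=0$ with $V(0)=1$, $V'(0)=0$, $V''(0)\simeq-1$, the function $g(x)=2V(x)+xV'(x)$ satisfies $g(0)=2$, while for $|x|\ge\del$ (some fixed $\del>0$) one has $g(x)\le 2-c_1$ for a fixed $c_1>0$: indeed away from the maximum $V$ drops below $1$ and the decay properties of the Regge–Wheeler potential make $xV'$ controllable, so $2V+xV'<2-c_1<2$; I would state this as an elementary property of the potential, uniform in $\hbar$, relegating the routine verification to the structure of $V$ established in Section~\ref{sec:low}. Introduce a smooth cutoff $\theta$ supported in $\{|x|\le\del\}$ with $\theta\equiv1$ on $\{|x|\le\del/2\}$, and write $\chi_I(H) g \chi_I(H)=\chi_I(H)\theta g\theta\chi_I(H)+(\text{terms with a factor }1-\theta)$; modulo commutator errors that are $O(\hbar)$ in operator norm (from $[\chi_I(H),\theta]=O(\hbar)$, a standard semiclassical pseudodifferential estimate), the $(1-\theta)$ part contributes at most $(2-c_1)\chi_I(H)+O(\hbar)$, which after subtracting from $2H\ge\eps$ is already comfortably bounded below.

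The main obstacle is the near-the-maximum piece $\chi_I(H)\theta g\theta\chi_I(H)$, where $g$ can be as large as $2$ and the naive bound only gives $\ge \eps\chi_I(H)-2\chi_I(H)+O(\hbar)$, which is negative. This is exactly the trapping obstruction, and the resolution is the uncertainty principle: on the support of $\theta$ one has $V(x)=1+\tfrac12 V''(0)x^2+O(x^3)\ge 1-Cx^2$, hence $H=p^2+V\ge p^2+1-Cx^2-C|x|^3$ on $\mathrm{Ran}(\theta)$, so $\theta(H-1)\theta \ge \theta(p^2-Cx^2)\theta + O(\del^3)$. Comparing with the semiclassical harmonic oscillator $p^2+C x^2 = -\hbar^2\del_x^2+Cx^2$, whose ground state energy is $\sqrt{C}\,\hbar$, gives $p^2\ge -Cx^2+\sqrt{C}\,\hbar - (\text{lower order})$ as a quadratic-form bound, i.e. $\theta g \theta \le 2\theta^2 - \text{(genuine gain)}$; more precisely one shows $\chi_I(H)\theta\big(2H-g\big)\theta\chi_I(H)\ge c_0\hbar\,\chi_I(H)\theta^2$ using that on the range of $\chi_I(H)\theta$ the operator $2H-g = 2p^2-xV'$ is, after the harmonic approximation, bounded below by a multiple of $\hbar$ via the oscillator ground-state energy. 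Assembling the two regions — the clean lower bound $\gtrsim\eps$ away from the maximum and the $c_0\hbar$ bound near it — and absorbing all commutator errors $O(\hbar)$ (choosing $\eps$, $\del$, then $\hbar$ small in that order), one obtains \eqref{eq:Mourre} with a fixed $c_0>0$. I expect the delicate point to be making the harmonic-oscillator comparison rigorous as an operator inequality localized to $\mathrm{Ran}\,\chi_I(H)$ while keeping the error terms genuinely $O(\hbar)$ uniformly in $\hbar$, since this is where the trapping is neutralized and where the nondegeneracy $V''(0)<0$ is essential.
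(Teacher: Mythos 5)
Your treatment of the trapping region is the right one and coincides with the paper's: near the nondegenerate maximum one has $-xV'(x)\ge c x^2$, so $\frac{i}{\hbar}[H,A]=2p^2-xV'\ge 2p^2+cx^2$ there, and the ground-state energy of the semiclassical harmonic oscillator (equivalently, the uncertainty principle $\|p\psi\|^2+\|x\psi\|^2\ge\hbar\|\psi\|^2$) produces exactly the $c_0\hbar$ lower bound. That is the heart of the lemma and you have it.

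The complementary region, however, does not close as you have written it, and this is a genuine gap rather than a technicality. You bound $g(x)=2V+xV'$ by $2-c_1$ on $\{|x|\ge\delta\}$ and set this against $2H\ge\eps$ on $\mathrm{Ran}\,\chi_I(H)$; but for small $\delta$ one has $c_1=O(\delta^2)$ (since $g(\delta)=2-2|V''(\xphot)|\delta^2+O(\delta^3)$), so the resulting lower bound $\eps-2+c_1$ is negative, not ``comfortably bounded below.'' The energy splitting you announce in your first sentence is precisely what is needed here but is never carried out: the paper splits $I$ into $I_0=[\eps/2,1-\eps/2]$, where the dynamics is classically nontrapping and the standard semiclassical Mourre estimate (Graf) applies, and $I_1=[1-2\eps,100]$, on which the spectral lower bound is $H\ge 1-2\eps$ --- not merely $H\ge\eps/2$ --- and the spatial cutoff away from the maximum is placed at a \emph{large} radius $x_1$, where the structural inequality $-xV'-V\ge0$ of the Regge--Wheeler potential makes the far-field contribution nonnegative. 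Without either the energy split or the large-$x_1$ far-field inequality, the intermediate region (spatially outside your small $\delta$-neighborhood but where $V$ is still close to $1$, paired with spectral energies near $\eps/2$) is not controlled by the inequalities you write down.

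A second, related problem is the error accounting. You absorb ``commutator errors that are $O(\hbar)$'' coming from $[\chi_I(H),\theta]$, but the positive term you are trying to establish is itself only $c_0\hbar$; an error $O(\hbar)$ with an unspecified constant cannot be absorbed into it. This is why the paper uses a symmetric IMS-type localization, $BF^2+F^2B=2FBF+[F,[F,B]]$, whose errors are \emph{double} commutators of size $O(\hbar^2)$ (its Lemma on $\|[F,[F,g(H)]]\|\les\hbar^2$, and $[F,[F,p^2]]=-2\hbar^2(F')^2$), together with smooth spectral cutoffs $g_I$ rather than the sharp indicator (sharp projections do not obey good commutator bounds; the statement for $\chi_I$ follows afterwards by sandwiching). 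To repair your argument you would need to (i) actually run the nontrapping virial argument on $I_0$, (ii) on $I_1$ use $H\ge1-2\eps$ together with either $g\le 2-c_1$ for a $c_1$ beating $4\eps$ or the far-field inequality $-xV'-V\ge0$ for $|x|\ge x_1$, and (iii) organize the spatial localization so that all errors are $O(\hbar^2)$.
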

\begin{proof}
We split $I=I_0\cup I_1$ where $I_0:=[\eps/2,1-\eps/2]$ and $I_1:=[1-2\eps,100]$.
We start with the latter, and write $I$ instead of $I_1$ for simplicity.
 First,
\begin{equation}\label{eq:HAbrack}
  \frac{i}{\hbar} [H,A] = 2p^2 - xV'(x;\hbar)\ge p^2 - xV'
\end{equation}
Hence, with $g_I$ being a smooth cutoff function adapted to $I$,
\begin{align*}
&g_I(H) \frac{i}{\hbar} [H,A]g_I(H) \\&\ge g_I(H) (p^2-xV')g_I(H)\\
&\ge g_I(H) ((p^2-xV')F^2 +  F^2 (p^2-xV') + (p^2-xV') \bar F^2 +\bar F^2 (p^2-xV')) g_I(H)\\
&\ge g_I(H) \big[ 2F(p^2-xV')F + [F,[F,p^2]] + \bar F^2 H \tilde g_I(H) + H \tilde g_I(H) \bar F^2  \\
&\quad
  + 2\bar F^2 (-xV' - V)\big] g_I(H)
\end{align*}
Here $1=F+\bar F$ is a smooth partition of unity with $F(x)=1$ on $[-x_1,x_1]$ where $x_1>0$ will be a large number depending only on~$V$.
Moreover, $\tilde g_I$ is another function adapted to $I$ with $\tilde g_I g_I=g_I$.
By \eqref{eq:nondegmax} $F(-xV')F\ge cx^2 F^2$ for some $c>0$ depending on $x_1$ and the Heisenberg uncertainty principle implies that
\[
 F(p^2-xV')F\ge c\, F(p^2+x^{2})F\ge 2c_0\hbar F^2
\]
The uncertainty principle here is being used in the form
\[
\| p \psi\|_{2}^{2}+\|x\psi\|_{2}^{2}\ge 2\|p\psi\|_{2}\|x\psi\|_{2}\ge {\hbar}\|\psi \|_{2}^{2}
\]
which immediately follows from the fact that $[p,x]=-i\hbar$, see for example~\cite{GS}. 
Furthermore,
\[
 [F,[F,p^2]]=-2\hbar^2 (F')^2
\]
and
\begin{align*}
 &g_I (\bar F^2 H\tilde g_I + H\tilde g_{I}\bar F^2) g_{I} \\&= g_I (\bar F^2 (H-1)\tilde g_I +
 (H-1)\tilde g_I \bar F^2 + (\bar F^2 \tilde g_I + \tilde g_I \bar F^2))g_I \\
&= g_I (2\bar F (H-1)\tilde g_I \bar F + 2\bar F\tilde g_I \bar F + [\bar F,[\bar F,(H-1)\tilde g_I]]
 + [\bar F,[\bar F, \tilde g_I ]] )g_I \\
& \ge g_I (2(1-\eps) \bar F^2 - O(\hbar^2))g_I \ge g_I \bar F^2 g_I  - C\hbar^2\, g_I  F^2 g_I 
\end{align*}
where we used that $\|[\bar F, [\bar F,\tilde g]]\|\les \hbar^2$, see
Lemma~\ref{lem:comm} below. Finally, from the shape of our potential
$V(x;\hbar)$ one verifies easily that $x_1$ can be chosen such that
$-xV'-V\ge0$ for all $|x|\ge x_1$ whence $\bar F^2(-xV'-V)\ge0$. In
view of the preceding,
\[
 g_I(H) \frac{i}{\hbar} [H,A]g_I(H)  \ge c_0 \hbar g_I^2(H)
\]
as desired.
Finally, on the interval $I_0$  one can use \eqref{eq:HAbrack} directly since one has a classical nontrapping condition
on energies in that range. This then gives the desired Mourre estimate in that range of energies, see Theorem~1 in~\cite{Graf}.
\end{proof}

The following commutator bound was used in the previous proof.

\begin{lemma}
 \label{lem:comm} Let $F$ and $g$ be smooth and compactly supported. Then \[ \|[F(x), [F(x), g(H)]]\|\le C \hbar^2 \] where $C=C(F,g)$.
\end{lemma}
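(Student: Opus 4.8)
The plan is to exploit the pseudodifferential structure of $g(H)$ together with the fact that each nested commutator with the (bounded) multiplication operator $F(x)$ picks up a factor of $\hbar$ from the principal symbol $\xi^2+V(x;\hbar)$ of the semiclassical operator $H$. Concretely, I would first write $g(H)$ via the Helffer--Sj\"ostrand formula,
\[
g(H)=\frac{1}{\pi}\int_\C \bar\del \tilde g(z)\,(H-z)^{-1}\,dA(z),
\]
where $\tilde g$ is an almost analytic extension of $g$ with $|\bar\del\tilde g(z)|\le C_N\,|\Im z|^N$ for any $N$. The double commutator then passes inside the integral, and one is reduced to controlling $[F,[F,(H-z)^{-1}]]$ with a gain that beats the blow-up of the resolvent near the real axis.

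The key algebraic identity is $[F,(H-z)^{-1}]=-(H-z)^{-1}[F,H](H-z)^{-1}$, and since $F$ is a multiplication operator, $[F,H]=[F,p^2]=-\hbar^2 F''-2i\hbar\,F'p$ (using $p=-i\hbar\del_x$), which is $O(\hbar)$ as an operator from $H^1_\hbar$ to $L^2$; more precisely $\|[F,H](H-z)^{-1}\|\le C\hbar\,\la\Im z\ra|\Im z|^{-1}$ after using that $\|p(H-z)^{-1}\|\lesssim |\Im z|^{-1}(1+|z|)^{1/2}$ on the support of $\tilde g$. Iterating once more,
\[
[F,[F,(H-z)^{-1}]] = (H-z)^{-1}[F,[F,H]](H-z)^{-1} + 2(H-z)^{-1}[F,H](H-z)^{-1}[F,H](H-z)^{-1},
\]
and now $[F,[F,H]]=[F,[F,p^2]]=-2\hbar^2(F')^2$ is manifestly $O(\hbar^2)$, while the second term carries two factors of $[F,H]$, hence also $O(\hbar^2)$. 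Each term is thus bounded by $C\hbar^2\,|\Im z|^{-3}$ (times polynomial growth in $|z|$, harmless on the compact support of $\tilde g$). Choosing the almost analytic extension with vanishing of order $N\ge 3$ at the real axis makes $\int_\C|\bar\del\tilde g(z)|\,|\Im z|^{-3}\,dA(z)<\infty$, and we conclude $\|[F,[F,g(H)]]\|\le C(F,g)\,\hbar^2$.

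The main obstacle is bookkeeping the resolvent bounds: one must be careful that commuting $F$ through never costs more than one inverse power of $|\Im z|$ beyond what is already there, and that the factors of $p$ produced by $[F,H]$ are absorbed by $(H-z)^{-1}$ with the correct power of $|\Im z|$ and the correct (bounded on $\supp\tilde g$) power of $\la z\ra$. This is where one uses that $H$ is bounded below and that $\|H^{1/2}(H-z)^{-1}\|\lesssim \la z\ra^{1/2}|\Im z|^{-1}$. The semiclassical parameter $\hbar$ plays no role in these resolvent estimates themselves — it enters \emph{only} through the two explicit commutators $[F,H]=O(\hbar)$ and $[F,[F,H]]=O(\hbar^2)$ — so once the resolvent powers are tracked correctly the $\hbar^2$ gain is immediate. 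I would also note that the same argument, with $g(H)$ replaced by $\tilde g_I(H)$, $(H-1)\tilde g_I(H)$, etc., yields the commutator bounds invoked in the proof of Lemma~\ref{lem:Mourre}, since multiplying the symbol of $g(H)$ by the smooth compactly supported symbols of $H-1$ or $\bar F^2$ does not affect the structure of the commutator expansion.
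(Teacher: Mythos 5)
Your proof is correct, but it follows a genuinely different route from the paper's. The paper first invokes the commutator expansion formula~\eqref{eq:comm_exp} (with $A=x$, $f=F$) to reduce the claim to a bound on $\ad_x^k(g(H))$, and then makes the crucial substitution $g(H)=\tilde g(\tilde H)$ with $\tilde H:=H(H+1)^{-1}$ a \emph{bounded} operator, so that all further commutator manipulations take place with bounded operators and $[x,\tilde H]=(H+1)^{-1}[x,H](H+1)^{-1}$ is $O(\hbar)$ using only $[x,H]=-2i\hbar p$ and $\|p(H+1)^{-1}\|\lesssim 1$. Your approach instead applies the Helffer--Sj\"ostrand formula directly to $g(H)$, pushes the double commutator inside against $(H-z)^{-1}$, iterates the resolvent identity explicitly, and tracks powers of $|\Im z|^{-1}$ against the $O(|\Im z|^N)$ decay of $\bar\partial\tilde g$. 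Both arguments are sound: the paper's $\tilde H$-trick trades the bookkeeping of $|\Im z|$ powers for a one-time reduction to a bounded operator (which is also reused verbatim in Corollary~\ref{cor:decay_top} and in the proof of the $\ad_A^k(g(\sqrt H))$ bounds), whereas your direct resolvent computation is more self-contained and makes the source of each factor of~$\hbar$ manifest from $[F,H]=O(\hbar p)+O(\hbar^2)$ and $[F,[F,H]]=-2\hbar^2(F')^2$. One small remark: you have a harmless sign slip in $[F,p^2]$ (it equals $\hbar^2F''+2i\hbar F'p$, not its negative) and in the identity for $[F,[F,(H-z)^{-1}]]$ (the term $(H-z)^{-1}[F,[F,H]](H-z)^{-1}$ should appear with a minus sign); neither affects the operator-norm estimates. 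You should also note that the bound $\|p(H-z)^{-1}\|\lesssim|\Im z|^{-1}$ on the compact support of $\tilde g$ follows from $\|p(H+1)^{-1}\|\lesssim1$ combined with $\|(H+1)(H-z)^{-1}\|\lesssim|\Im z|^{-1}$, which is essentially the same $\tilde H$-type observation the paper uses -- so the two proofs are ultimately close in spirit, even though the scaffolding differs.
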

\begin{proof} For simplicity we show that $\|[F(x), g(H)]\|\le C \hbar$, the double commutator being an obvious variation thereof. 
 By the commutator expansion formula~\eqref{eq:comm_exp} one has
\[
 \| [F(x), g(H)] \| \les C(F) \| [x,g(H)] \|
\]
Now $g(H) = \tilde g(\tilde H)$ where $\tilde H := H (H+1)^{-1}$ and $\tilde g$ is again smooth and compactly supported. Hence one
can expand with the bounded $\tilde H$ to conclude that
\[
 \| [F(x), g(H)] \| \les C(F,g) \| [x,\tilde H] \| = C(F,g) \| (H+1)^{-1} [x,H] (H+1)^{-1} \| \les C(F,g)\hbar
\]
Here we used that $[x,H]= -2i \hbar p$ and $\|p(H+1)^{-1}\|\les \|p(1+p^2)^{-1}\|\les  1$.
\end{proof}

Since we are dealing with wave rather than the Schr\"odinger equation, we need to derive a Mourre estimate for $\sqrt{H}$ rather than~$H$.
However, this is an easy consequence of the Kato square root formula. 

\begin{cor}\label{cor:Mourre}
 For $\eps>0$ and $\hbar$ small, there exists a fixed constant $\tilde c_0>0$ so that
\begin{equation}\nn 
 \chi_I(H) \frac{i}{\hbar} [\sqrt{H},A]\chi_I(H) \ge \tilde c_0 \hbar \chi_I(H)
\end{equation}
where $\chi_I$ is the indicator of $I:=[\eps/2,100]$.
\end{cor}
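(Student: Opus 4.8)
The plan is to deduce the Mourre estimate for $\sqrt{H}$ from Lemma~\ref{lem:Mourre} using the integral representation
\[
\sqrt{H} = \frac{1}{\pi}\int_0^\infty \frac{H}{H+s}\,\frac{ds}{\sqrt{s}},
\]
valid since $H\ge 0$ (in fact $H\ge c\hbar^2>0$ on the relevant spectral subspace, but positivity suffices). First I would commute $A$ through this formula: formally,
\[
\frac{i}{\hbar}[\sqrt{H},A] = \frac{1}{\pi}\int_0^\infty \frac{i}{\hbar}\Bigl[\frac{H}{H+s},A\Bigr]\frac{ds}{\sqrt{s}},
\]
and then use $[\,H(H+s)^{-1},A\,] = s\,(H+s)^{-1}[H,A](H+s)^{-1}$, which follows from $[(H+s)^{-1},A] = -(H+s)^{-1}[H,A](H+s)^{-1}$ and the resolvent identity. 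Thus
\[
\frac{i}{\hbar}[\sqrt{H},A] = \frac{1}{\pi}\int_0^\infty s\,(H+s)^{-1}\,\frac{i}{\hbar}[H,A]\,(H+s)^{-1}\,\frac{ds}{\sqrt{s}}.
\]

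Next I would sandwich with $\chi_I(H)$. Since $(H+s)^{-1}$ commutes with $\chi_I(H)$, we get
\[
\chi_I(H)\frac{i}{\hbar}[\sqrt{H},A]\chi_I(H)
= \frac{1}{\pi}\int_0^\infty s\,(H+s)^{-1}\chi_I(H)\,\frac{i}{\hbar}[H,A]\,\chi_I(H)(H+s)^{-1}\,\frac{ds}{\sqrt{s}}.
\]
Here one must be slightly careful: $\chi_I$ is a sharp indicator, so to make the manipulations rigorous one first proves the estimate with a smooth $g_I$ satisfying $g_I\chi_I = \chi_I$ (equivalently, insert $g_I(H)$ which acts as the identity on $\mathrm{ran}\,\chi_I(H)$), applies Lemma~\ref{lem:Mourre} in the form $g_I(H)\frac{i}{\hbar}[H,A]g_I(H)\ge c_0\hbar\, g_I^2(H)$, and then multiplies by $\chi_I(H)$ from both sides at the end. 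With that in place, the integrand is bounded below by $c_0\hbar\, s\,(H+s)^{-1}\chi_I(H)(H+s)^{-1} = c_0\hbar\, s\,(H+s)^{-2}\chi_I(H)$, so
\[
\chi_I(H)\frac{i}{\hbar}[\sqrt{H},A]\chi_I(H) \ge \frac{c_0\hbar}{\pi}\Bigl(\int_0^\infty \frac{s\,ds}{(H+s)^2\sqrt{s}}\Bigr)\chi_I(H).
\]
On $\mathrm{ran}\,\chi_I(H)$ one has $H\simeq 1$, and the scalar integral $\frac{1}{\pi}\int_0^\infty \frac{\sqrt{s}}{(\lambda+s)^2}\,ds = \frac{1}{2\sqrt{\lambda}}$ for $\lambda>0$; hence the bracketed operator equals $\frac{1}{2}H^{-1/2}\chi_I(H)\ge \frac{1}{2}\cdot\frac{1}{10}\chi_I(H) = \tilde c_1\chi_I(H)$ for a fixed $\tilde c_1>0$ (since $H\le 100$ there). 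This yields the claim with $\tilde c_0 = \tilde c_1 c_0$.

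The main technical point — the only place where any care is needed — is justifying the interchange of the commutator $[\,\cdot\,,A]$ with the $s$-integral, since $A$ is unbounded and the identity $\sqrt H=\pi^{-1}\int H(H+s)^{-1}s^{-1/2}ds$ a priori holds only in the strong sense. The clean way is to work throughout with quadratic forms on a suitable core (e.g. $\mathcal S(\R)$, or the domain of $A$ intersected with the domain of $H$), to note that $(H+s)^{-1}$ maps this core to itself and that $\|(H+s)^{-1}\|\le s^{-1}$ together with $\|H(H+s)^{-1}\|\le 1$ give an integrable dominating bound $\lesssim s^{-3/2}\min(1,s^{-1}\|\cdot\|)$ after inserting the spectral localizer, so Fubini applies to the sesquilinear form $\langle \psi, \frac{i}{\hbar}[\sqrt H,A]\psi\rangle$ for $\psi\in\mathrm{ran}\,\chi_I(H)$ in the core. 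Everything else is routine; I would simply cite the Kato square root formula and the standard fact that these form manipulations are legitimate, referring to~\cite{GS} or to the Mourre-theory literature, rather than belabor the domain bookkeeping.
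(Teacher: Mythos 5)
Your proof is correct and follows essentially the same route as the paper: a resolvent-integral (Kato square-root) representation, the identity $[(H+s)^{-1},A]=-(H+s)^{-1}[H,A](H+s)^{-1}$, and Lemma~\ref{lem:Mourre}, followed by evaluating the resulting scalar integral. The paper uses the representation for $H^{-1/2}$ together with $[\sqrt H,A]=\sqrt H\,[A,H^{-1/2}]\,\sqrt H$, which is algebraically interchangeable with your direct representation of $\sqrt H$ via $H(H+s)^{-1}$ and leads to the identical lower bound $\tfrac12 H^{-1/2}\chi_I(H)\ge\tfrac1{20}\chi_I(H)$.
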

\begin{proof}   One uses that 
\[
H^{-\frac12}\chi_I(H) = \frac{1}{\pi}\int_0^\infty (H+\lambda)^{-1}\lambda^{-\frac12}\, d\lambda\; \chi_I(H)
\]
whence by Lemma~\ref{lem:Mourre}, 
\begin{align*}
&\chi_I(H) \frac{i}{\hbar} [\sqrt{H},A]\chi_I(H) \\
&= \chi_I(H)\sqrt{H}  \frac{i}{\hbar} [A, H^{-\frac12}] \sqrt{H} \chi_I(H) \\
&= \frac{1}{\pi} \chi_I(H)\sqrt{H}  \int_0^\infty     \frac{i}{\hbar}[A, (H+\lambda)^{-1} ]  \lambda^{-\frac12}\, d\lambda\; \sqrt{H}\chi_I(H)\\
&= \frac{1}{\pi}\sqrt{H}  \int_0^\infty    (H+\lambda)^{-1}  \chi_I(H) \frac{i}{\hbar} [ H, A ]   \chi_I(H) (H+\lambda)^{-1}\lambda^{-\frac12}\, d\lambda\; \sqrt{H}\\
&\ge \frac{1}{\pi}\sqrt{H}  \int_0^\infty    (H+\lambda)^{-1} c_0\,\hbar \chi_I(H) (H+\lambda)^{-1}\lambda^{-\frac12}\, d\lambda\; \sqrt{H}\\
&\ge \tilde c_0 \hbar \chi_I(H)
\end{align*}
and we are done. 
\end{proof}

In order to apply the time-decay result from Section~\ref{sec:HSS}, we need to verify the basic commutator assumption~\eqref{eq:comm_ass}.
For the definition of $\ad_{A}^{k}$ we refer the reader to that section. 

\begin{lemma}
For any smooth function $g$ on the line with support in $(0,\infty)$ one has 
\[
 \|\ad_A^k(g(\sqrt{H}))\|\le C(k,g) \hbar^k
\]
for all $k\ge1$. 
\end{lemma}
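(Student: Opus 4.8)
The plan is to prove this by induction on $k$, reducing everything to the single commutator bound $\|[A, g(\sqrt H)]\|\le C\hbar$ together with the observation that $\ad_A$ preserves the class of operators of the form ``$\hbar^j \times$ (bounded functional calculus expression in $H$, $p$, $x$, localized appropriately)''. The cleanest route is to first pass from $g(\sqrt H)$ to $\tilde g(\tilde H)$ where $\tilde H := H(H+1)^{-1}$ is bounded and $\tilde g$ is smooth and compactly supported, exactly as in the proof of Lemma~\ref{lem:comm}; since $\ad_A$ is a derivation, $\ad_A^k(g(\sqrt H)) = \ad_A^k(\tilde g(\tilde H))$ and we may work with the bounded operator $\tilde H$ throughout. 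Then I would use the Helffer--Sj\"ostrand formula, writing
\[
\tilde g(\tilde H) = \frac{1}{\pi}\int_\C \bar\partial \tilde g^e(z)\, (\tilde H - z)^{-1}\, dm(z)
\]
with $\tilde g^e$ an almost-analytic extension, so that commuting $A$ through reduces to estimating $\ad_A^k\big((\tilde H-z)^{-1}\big)$ with good control in $\Im z$.

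The core of the argument is the identity, for a derivation $\ad_A$ acting on a resolvent,
\[
[A, (\tilde H - z)^{-1}] = -(\tilde H-z)^{-1} [A,\tilde H] (\tilde H - z)^{-1},
\]
and its iterates, which express $\ad_A^k\big((\tilde H - z)^{-1}\big)$ as a sum of products of resolvents $(\tilde H-z)^{-1}$ interlaced with the iterated commutators $\ad_A^j(\tilde H)$ for $1\le j\le k$. Since $\tilde H = H(H+1)^{-1}$, writing $\ad_A^j(\tilde H)$ in terms of $\ad_A^j(H)$ and resolvents $(H+1)^{-1}$, the key input is that each $\ad_A(H)$ carries a factor $\hbar$: indeed $\frac{i}{\hbar}[H,A] = 2p^2 - xV'(x;\hbar)$ by~\eqref{eq:HAbrack}, so $[H,A] = \hbar(2p^2 - xV')$, which after multiplication by $(H+1)^{-1}$ on both sides is bounded uniformly in $\hbar$ (using $\|p(H+1)^{-1}\|\lesssim 1$ and that $xV'$ is bounded since $V'$ decays and is smooth). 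More generally one checks inductively that $\ad_A^j(H) = \hbar^j\, B_j$ where $B_j$ is a polynomial-type expression in $p$ and $x$ with $V$-derivatives as coefficients, and $(H+1)^{-m} B_j (H+1)^{-m'}$ is bounded uniformly in $\hbar$ for suitable $m, m'$; the factors $\hbar$ come out because each application of $\ad_A$ either hits $V$ (producing $-\hbar x V'$-type terms, as $[V,A]$ involves $\hbar$) or hits $p^2$ (producing $\hbar p^2$-type terms, since $[p^2,A]$ involves $\hbar$). Collecting, $\ad_A^k(\tilde H)$ is $\hbar^k$ times a bounded operator, and then $\ad_A^k\big((\tilde H-z)^{-1}\big)$ is a sum of terms each of the form $\hbar^{k}$ times a product of resolvents and bounded operators, with the number of resolvents $(\tilde H-z)^{-1}$ exceeding the number of bounded insertions by one, yielding a bound $\lesssim \hbar^k \la\Im z\ra^{-k-1}$ in operator norm. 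Integrating against $\bar\partial \tilde g^e(z) = O(|\Im z|^{N})$ for $N$ large (available since $\tilde g$ is smooth) then gives $\|\ad_A^k(\tilde g(\tilde H))\| \le C(k,g)\hbar^k$.

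An alternative, more hands-on packaging avoids Helffer--Sj\"ostrand and instead uses the commutator expansion formula~\eqref{eq:comm_exp} (the same tool invoked in Lemma~\ref{lem:comm}) repeatedly: at each stage one commutes $A$ past the smooth function of the bounded operator $\tilde H$, picking up $[A,\tilde H]$ and its functional-calculus dressings, and tracks the power of $\hbar$. Either way, the main obstacle is purely bookkeeping: one must verify that the iterated commutators $\ad_A^j(H)$, when sandwiched by enough powers of $(H+1)^{-1}$, remain bounded \emph{uniformly in $\hbar$}, i.e.\ that the growth in $x$ of the coefficients $xV', x^2V'', \dots$ is controlled by the smoothness and decay of the Regge-Wheeler potential and does not conspire with the unboundedness of $p$. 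This is where one uses the explicit structure of $V(x;\hbar)$ --- smooth, with all derivatives $\del_x^m V$ bounded uniformly in $\hbar$ (indeed $x^m \del_x^m V$ bounded, given the $x^{-2}$ behavior at $+\infty$ and exponential decay at $-\infty$) --- so that the dangerous terms are dominated by powers of $H$ and absorbed by the resolvents. Given that, the $\hbar$-counting is automatic and the estimate follows.
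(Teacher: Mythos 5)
Your proposal follows essentially the same route as the paper's own proof: reduce to $\tilde g(\tilde H)$ with $\tilde H = H(H+1)^{-1}$ bounded, prove $\|\ad_A^k(\tilde H)\|\le C(k)\hbar^k$ by induction using $i[H,A]=\hbar(2p^2-xV')$ and the identity $[(H+1)^{-1},A]=-(H+1)^{-1}[H,A](H+1)^{-1}$, and then transfer to $\ad_A^k(\tilde g(\tilde H))$ via the Helffer--Sj\"ostrand formula and the resolvent commutator expansion, integrating the $|\Im z|^{-k-1}$ growth of $\ad_A^k((\tilde H-z)^{-1})$ against the $O(|\Im z|^N)$ decay of $\partial_{\bar z}\tilde g^e$. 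Two small slips worth noting: the equality $\ad_A^k(g(\sqrt H))=\ad_A^k(\tilde g(\tilde H))$ holds because $g(\sqrt H)=\tilde g(\tilde H)$ as operators (an equality of the objects being commuted), not as a consequence of $\ad_A$ being a derivation; and the resolvent bound should read $\lesssim\hbar^k|\Im z|^{-k-1}$ (diverging as $\Im z\to0$, which is precisely what the almost-analytic extension is designed to absorb), not $\hbar^k\la\Im z\ra^{-k-1}$.
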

\begin{proof}
For the purposes of this proof, we call any smooth function $g$ on the line with support in $(0,\infty)$ {\em admissible}. 
First, there exists another admissible function $\tilde g$ with $g(\sqrt{H})=\tilde g(H)$. Second, with $\tilde H=H(H+1)^{-1}$ 
for any admissible $g$ there exists $\tilde g$ admissible such that 
 $g(H)=\tilde g(\tilde H)$. So it suffices to consider $\ad_{A}^{k}(g(\tilde H))$ with admissible~$g$.  

As a preliminary calculation, note that 
\begin{align*}
 i[H,A] &= \hbar(2H-(2V+xV')) =: \hbar (2H+V_{1}) \\
  i[\tilde H,A] &= (H+1)^{-1} i[H,A] (H+1)^{-1} = \hbar (H+1)^{-1} (2H+V_{1}) (H+1)^{-1}
\end{align*}
whence $\| [\tilde H,A]\| \le C\, \hbar$. At the next level, 
\begin{align*}
i[i[H,A]A] &= \hbar (2 i [H,A] + i [V_{1},A]) = \hbar^{2} ( 4H + 2V_{1} - xV_{1}')   
\end{align*}
For $\tilde H$ we use the general identity 
\[
[SBS, A]= S B[S,A] + S[B,A]S + [S,A]BS
\]
to conclude that 
\begin{multline}\label{eq:comm 2}
i[i[\tilde H,A]A] =  i \hbar \big\{ (H+1)^{-1} (2H+V_{1}) [(H+1)^{-1},A] +  (H+1)^{-1} [2H+V_{1},A] (H+1)^{-1} \\
 + [(H+1)^{-1}  ,A]  (2H+V_{1})  (H+1)^{-1} \big\}
\end{multline} 
Inserting
\[
i[(H+1)^{-1},A] = -(H+1)^{-1} i[H,A](H+1)^{-1}
\]
into \eqref{eq:comm 2} implies that  $\| i[i[\tilde H,A]A]\| \le C\hbar^{2}$.  Continuing in this fashion implies 
\[
\| \ad^{k}_{A}(\tilde H)\|\le C(k)\, \hbar^{k}\qquad\forall\; k\ge1
\]
Next, we transfer this estimate to $\ad_{A}^{k}(g(\tilde H))$ via an {\em almost analytic extension} of 
an admissible function~$g$. This refers to a smooth function $G_{N}(z)$ in the complex plane of compact support
such that $g=G_{N}$ on the real axis and  with 
\begin{equation}
\label{eq:almost analytic}
|(\partial_{\bar z}G_{N})(z)|\le C_{N}\, |\Im z|^{N}
\end{equation}
for an arbitrary but fixed positive
integer~$N$. One then has  the {\em Helffer-Sj\"ostrand formula}
\begin{equation}\label{eq:Helffer}
g(\tilde H) = \frac{1}{\pi} \int_{\C} (\partial_{\bar z}G_{N})(z) (\tilde H-z)^{-1}\, m(dz)
\end{equation}
 where $m$ is the Lebesgue measure on~$\C$, see~\cite[Chapter 2]{Davies}.   The desired estimate now follows from 
 \[
 \ad_{A}^{k}(g(\tilde H)) = \frac{1}{\pi} \int_{\C} (\partial_{\bar z}G_{N})(z) \ad_{A}^{k}((\tilde H-z)^{-1})\, m(dz)
 \]
For example, for $k=1$ 
\[
\ad_{A}^{1}((\tilde H-z)^{-1}) = -(\tilde H-z)^{-1}[\tilde H,A](\tilde H-z)^{-1}
\]
and therefore 
\[
\big\| \ad_{A}^{1}((\tilde H-z)^{-1})\big\| \le C|\Im z|^{2} \hbar 
\]
Inserting this into~\eqref{eq:Helffer} and using~\eqref{eq:almost analytic} yields 
\[
\| [g(\tilde H), A]\| \le C\hbar
\]
The cases of higher $k$ are analogous. The larger $k$ is, the larger $N$ needs to be. 
\end{proof}

We are now ready to state the main decay estimate for intermediate energies. 

\begin{cor}
 \label{cor:decay_top} One has for small $\hbar$ and all $t\ge0$, as well as any $\alpha\ge0$,
\begin{equation}
 \label{eq:decaytop22} \| \la x\ra^{-\alpha} e^{i\frac{t\sqrt{H}}{\hbar}} \chi_I(H) \la x\ra^{-\alpha}\|_{2\to2} \le C(\alpha)   \la \hbar t\ra^{-\alpha}
\end{equation}
Furthermore,
\begin{equation}
 \label{eq:decaytop1infty} \| \la x\ra^{-\alpha} e^{i\frac{t\sqrt{H}}{\hbar}} \chi_I(H) f\|_\infty \le C(\alpha) \hbar^{-1} \la \hbar t\ra^{-\alpha}
\| \la x\ra^{\alpha}f\|_1
\end{equation}
\end{cor}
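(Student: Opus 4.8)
The plan is to derive \eqref{eq:decaytop22} as a consequence of the Mourre bound in Corollary~\ref{cor:Mourre} together with the semiclassical propagation machinery of Hunziker--Sigal--Soffer that will be set up in Section~\ref{sec:HSS}. The key point is that the HSS estimates yield, from a Mourre bound of the form $\chi_I(H)\,\tfrac{i}{\hbar}[\sqrt{H},A]\,\chi_I(H)\ge \tilde c_0\hbar\,\chi_I(H)$ together with the iterated commutator bounds $\|\ad_A^k(g(\sqrt{H}))\|\le C(k,g)\hbar^k$, a bound on weighted propagators in which the effective ``velocity'' is $\sim\hbar$. Concretely, the HSS theory controls $\|\la A\ra^{-\alpha}\,g(\sqrt{H})\,e^{i t\sqrt{H}/\hbar}\,\la A\ra^{-\alpha}\|$ by $\la \hbar t\ra^{-\alpha}$; the $\hbar$ inside $\la\hbar t\ra$ enters precisely because the commutator $[\sqrt H,A]$ is of size $\hbar$ rather than $O(1)$, so that the natural propagation time scale is $t\sim\hbar^{-1}$. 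I would first invoke this abstract output with the generator $A=\tfrac12(px+xp)$ and the cutoff $\chi_I$ (approximated from above and below by smooth $g_I$'s adapted to $I$, using $\chi_I(H)=g_I(H)\chi_I(H)$), and then convert the $\la A\ra$-weights to $\la x\ra$-weights.

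The weight conversion is the first real step that requires care: I would show $\|\la x\ra^{-\alpha}\la A\ra^{\alpha}g_I(\sqrt H)\|\lesssim 1$ uniformly in $\hbar$. Since $A=\tfrac12(px+xp)$ with $p=-i\hbar\partial_x$, on the range of $g_I(\sqrt H)$ the operator $p$ is bounded (by $\|p\,g_I(\sqrt H)\|\lesssim 1$ as in the proof of Lemma~\ref{lem:comm}), so morally $A\sim x\cdot O(1)$ on that spectral subspace; the rigorous version is a standard commutator/interpolation argument comparing $\la A\ra$ and $\la x\ra$ on $\mathrm{ran}\,g_I(\sqrt H)$, using that $[x,\sqrt H]$ is $O(\hbar)$ and hence $g_I(\sqrt H)$ is ``almost local''. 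Combining this on both sides with the abstract HSS bound yields \eqref{eq:decaytop22}:
\[
\|\la x\ra^{-\alpha}e^{it\sqrt H/\hbar}\chi_I(H)\la x\ra^{-\alpha}\|_{2\to2}\lesssim \|\la x\ra^{-\alpha}\la A\ra^{\alpha}g_I(\sqrt H)\|\,\la\hbar t\ra^{-\alpha}\,\|g_I(\sqrt H)\la A\ra^{\alpha}\la x\ra^{-\alpha}\|\lesssim_{\alpha}\la\hbar t\ra^{-\alpha}.
\]

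For the $L^1\to L^\infty$ bound \eqref{eq:decaytop1infty} I would upgrade \eqref{eq:decaytop22} via a semiclassical Bernstein inequality. Since $\chi_I(H)$ localizes to $\sqrt H\in I$, and $H=-\hbar^2\partial_x^2+V$, the spectral localization forces momentum localization $|p|\lesssim 1$, i.e.\ frequencies $|\xi|\lesssim \hbar^{-1}$; hence $\|g_I(\sqrt H)u\|_\infty\lesssim \hbar^{-1/2}\|g_I(\sqrt H)u\|_2$ in one spatial dimension (the single power of $\hbar^{-1}$ in \eqref{eq:decaytop1infty}, rather than $\hbar^{-1/2}$ on each side, comes from applying Bernstein on only one end after pairing with the $L^2\to L^2$ bound and a duality argument; more precisely one writes $e^{it\sqrt H/\hbar}\chi_I(H)=e^{it\sqrt H/2\hbar}\chi_I(H)\cdot\chi_I(H)e^{it\sqrt H/2\hbar}$, applies \eqref{eq:decaytop22} to the middle and Bernstein to convert $L^1\to L^2$ and $L^2\to L^\infty$ at the two ends). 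The main obstacle I anticipate is making the $\la A\ra\leftrightarrow\la x\ra$ comparison genuinely uniform in $\hbar$ at high weight powers $\alpha$, since naively $A$ carries a factor of $p$ which, while bounded on $\mathrm{ran}\,g_I(\sqrt H)$, interacts with the weights through commutators that must be tracked to all orders $\le\alpha$; this is exactly the kind of bookkeeping the iterated commutator bounds on $\ad_A^k(g(\sqrt H))$ are designed to handle, so I would lean on those, but verifying that no spurious power of $\hbar^{-1}$ creeps in at each order is the delicate part. Everything else is a routine assembly of Section~\ref{sec:HSS}, Corollary~\ref{cor:Mourre}, and Bernstein's inequality.
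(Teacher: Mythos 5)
Your proposal follows the same overall route as the paper: (i) obtain the $\langle A\rangle$-weighted decay from Corollary~\ref{cor:Mourre} together with the iterated commutator bounds via Proposition~\ref{prop:HSS}, (ii) convert $\langle A\rangle$-weights to $\langle x\rangle$-weights using that $p$ is bounded on the range of the spectral cutoff, and (iii) upgrade $L^2\to L^2$ to $L^1\to L^\infty$ by a semiclassical Bernstein argument paying $\hbar^{-1/2}$ on each side. You also correctly flag the weight conversion at high $\alpha$ as the delicate bookkeeping step; the paper handles it by reducing, via interpolation and adjoints, to bounding $\langle x\rangle^{-\ell}\tilde g(\tilde H)\langle A\rangle^{2\ell}\tilde g(\tilde H)\langle x\rangle^{-\ell}$ for integer $\ell$, expanding $\langle A\rangle^{2\ell}$ into terms of the form $x^k p^{2k}x^k$ with $k\le\ell$, using $p^2\le H$, and then moving $x^k$ across the cutoff by repeated Helffer--Sj\"ostrand expansions, each of which drops the $x$-degree by one --- which is exactly the argument you sketch.

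One concrete imprecision: the factorization you propose for the $L^1\to L^\infty$ step, $e^{it\sqrt H/\hbar}\chi_I(H)=e^{it\sqrt H/2\hbar}\chi_I(H)\cdot\chi_I(H)e^{it\sqrt H/2\hbar}$, does not place the $\langle x\rangle^{\pm\alpha}$ weights where you need them --- there is no ``middle'' slot between the two $\chi_I(H)$ factors to sandwich the weighted propagator, and applying \eqref{eq:decaytop22} to each half would give $\langle\hbar t\rangle^{-2\alpha}$ rather than $\langle\hbar t\rangle^{-\alpha}$ while leaving no weight to feed Bernstein. The paper instead inserts resolvent factors that carry \emph{no} time evolution on the outside and keeps the full propagator in the middle:
\[
\la x\ra^{-\alpha} e^{i\frac{t\sqrt{H}}{\hbar}} \chi_I(H) \la x\ra^{-\alpha}
= \bigl[\la x\ra^{-\alpha} (1+H)^{-1} \la x\ra^{\alpha}\bigr]\,
\bigl[\la x\ra^{-\alpha}  e^{i\frac{t\sqrt{H}}{\hbar}}(1+H)^{2} \chi_I(H) \la x\ra^{-\alpha}\bigr]\,
\bigl[\la x\ra^{\alpha} (1+H)^{-1} \la x\ra^{-\alpha}\bigr],
\]
so the middle factor absorbs the $L^2\to L^2$ decay while the outer ones give $L^1\to L^2$ and $L^2\to L^\infty$ at cost $\hbar^{-1/2}$ each. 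The paper then realizes the Bernstein bound concretely via the Green's function of $(1+H)^{-1}$, written in terms of positive Jost solutions to $1+H$, yielding the kernel bound $0<(1+H)^{-1}(x,x')\le C\hbar^{-1}e^{-|x-x'|/\hbar}$ and hence $\|\la x\ra^{-\alpha}(1+H)^{-1}\la x\ra^{\alpha}f\|_\infty\le C\hbar^{-1/2}\|f\|_2$ by Cauchy--Schwarz. Swapping in this factorization for yours closes the gap; the rest of your sketch matches the paper's proof.
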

\begin{proof} 
By Corollary~\ref{cor:Mourre} and the previous lemma, we conclude from Proposition~\ref{prop:HSS}
that for any admissible function $g$ (as defined in the previous proof) and any $\alpha\ge0$
\begin{equation}
\label{eq:deca 1}
\| \la   A\ra^{-\alpha} e^{-i\frac{t\sqrt{H}}{\hbar}}
  g(H) \la   A\ra^{-\alpha} f\|_{2} \le C\, \la \hbar t\ra^{-\alpha} \| f\|_{2}
\end{equation}
To derive \eqref{eq:decaytop22} from this estimate, we pick another admissible $\tilde g$ so that $\tilde g(\tilde H)g(H)=g(H)$ where $\tilde H=H(H+1)^{-1}$
as before.  Moreover, the support of $\tilde g$ is taken to lie strictly within~$(0,1)$. Then 
\begin{align*}
 \la x\ra^{-\alpha} e^{i\frac{t\sqrt{H}}{\hbar}} \chi_I(H) \la x\ra^{-\alpha} &=    \la x\ra^{-\alpha} \tilde g(\tilde H) \la   A\ra^{\alpha} \,   \la   A\ra^{-\alpha}  e^{i\frac{t\sqrt{H}}{\hbar}} \chi_I(H) \la   A\ra^{-\alpha}  \, \la   A\ra^{\alpha} \tilde g(\tilde H) \la x\ra^{-\alpha}
\end{align*}
It therefore suffices to prove that   
\begin{equation}\label{eq:Cal g}
\big\| \la x\ra^{-\alpha} \tilde g(\tilde H) \la   A\ra^{\alpha} \big\| \le C(\alpha)
\end{equation}
The logic here is that  the cutoff $\tilde g(\tilde H)$ guarantees that $H=p^{2}+V$ is bounded, whence also $p^{2}$ is bounded. But then~$p$ is bounded, so $A$ should be at
most as large as~$x$ which justifies~\eqref{eq:Cal g}. By complex interpolation, it suffices to prove that~\eqref{eq:Cal g} holds for positive integers~$\alpha$. Moreover, composing with the
 adjoints shows that this is the same as 
 \[
 \big\| \la x\ra^{-\alpha} \tilde g(\tilde H) \la   A\ra^{2\alpha}   \tilde g(\tilde H)  \la x\ra^{-\alpha}   \big\| \le C(\alpha)^{2}
 \]
For example, set $\alpha=1$. Then one checks that 
\begin{align*}
\la   A\ra^{2} &=1 + \frac14(px+xp)^{2} = 1-\frac{\hbar^{2}}{4} + xp^{2}x 
=  1-\frac{\hbar^{2}}{4} -x V x + xHx
\end{align*}
Since $V=O(\la x\ra^{-2})$, it suffices to bound $xHx$.    Let $G$ denote the almost analytic extension of~$\tilde g$ as in the proof of the previous lemma. 
Then
\begin{align}
& \la x\ra^{-1}\tilde g(\tilde H) xHx \tilde g(\tilde H) \la x\ra^{-1}  = \nn \\
& = \la x\ra^{-1} ( x\tilde g(\tilde H)  + [\tilde g(\tilde H), x]) H ( \tilde g(\tilde H) x  -  [\tilde g(\tilde H), x]) \la x\ra^{-1}  \label{eq:G H x}
\end{align}
It is clear that the terms involving no commutators are bounded. 
For the commutators in the second line we use the Helffer-Sj\"ostrand formula as before, viz. 
\begin{align}
[\tilde g(\tilde H), x] &= \frac{1}{\pi}\int_{\C} \partial_{\bar z}G(z) [(\tilde H-z)^{-1}, x] \, m(dz) \nn  \\
&= \frac{1}{\pi}\int_{\C} \partial_{\bar z}G(z)(\tilde H-z)^{-1}  [x,\tilde H] (\tilde H-z)^{-1} \, m(dz) \nn   \\
&= \frac{1}{\pi}\int_{\C} \partial_{\bar z}G(z)(\tilde H-z)^{-1}  (H+1)^{-1} (-2i\hbar p) (H+1)^{-1}(\tilde H-z)^{-1} \, m(dz)   \label{eq:HS xH}
\end{align}
In particular, $[\tilde g(\tilde H), x]$ is a bounded operator. 
Inserting this into~\eqref{eq:G H x} concludes the argument for~$\alpha=1$. For $\alpha>1$ the argument is similar. We begin by expanding
for $\ell\ge1$ an integer 
\begin{align}
\la A\ra^{2\ell} &= (1+ (xp+px)^2/4)^{\ell} \nn \\
&=  \sum \mathrm{const}\cdot x^{m_{1}} p^{n_{1}}x^{m_{2}} p^{n_{2}}\cdots  x^{m_{s}} p^{n_{s}}   \label{eq:Aell}
\end{align}
where the sum extends over integer $m_{i}, n_{i}$ with 
\[
\sum_{i} n_{i} \le 2\ell,\qquad \sum_{i} m_{i} \le 2\ell
\]
Moreover,  using the commutator $[p,x]=-i\hbar$ to move powers of~$p$ through powers of~$x$, the general term in~\eqref{eq:Aell}
may be written as $x^{k} p^{2k} x^{k}$ where $k\le \ell$. Hence, we need to show that 
\begin{equation}\label{eq:xjp}
\la x\ra^{-\ell}  \tilde g(\tilde H) x^{k} p^{2k} x^{k} \tilde g(\tilde H) \la x\ra^{-\ell}
\end{equation}
with $0\le k\le \ell$ is a bounded operator.   First, the operator in~\eqref{eq:xjp} is nonegative, and moreover bounded above by 
\begin{equation}\label{eq:xjp 2}
\la x\ra^{-\ell}  \tilde g(\tilde H) x^{k} H^{k} x^{k} \tilde g(\tilde H) \la x\ra^{-\ell}
\end{equation}
since $p^{2}\le p^{2}+V=H$. Note that if we can move $x^{k}$ across the spectral cut-offs, then we are done since $0\le k\le\ell$. To accomplish this, 
we start from 
the following identity, which is proved by induction: for every $k\ge2$ 
\begin{equation}\label{eq:xk H commute}
[x^{k},H] = - 2i\hbar \sum_{j=1}^{k-1} x^{k-j-1} p x^{j}
\end{equation}
and $[x,H]=-2i\hbar p$. Several comments are in order: first, 
domain considerations are irrelevant due to the cutoff $\tilde g(\tilde H)$ which is always applied. In fact, we may use this formally and in the
end justify the procedure a posteriori by obtaining a bound on the $L^{2}$-operator norm. Second, the total weight in~$x$ on the right-hand side of~\eqref{eq:xk H commute} is $k-1$. 
And third, in any given term $ x^{k-j-1} p x^{j}$ we can shift the position of~$p$ arbitrarily using the commutator~$[p,x]=-2i\hbar$. 
To proceed, one has 
\[
[ x^{k}, \tilde H] = (H+1)^{-1} [x^{k}, H] (H+1)^{-1} 
\]
so that 
\begin{equation}
\label{eq:HS xk H}
[\tilde g(\tilde H), x^{k}]  = \frac{1}{\pi}\int_{\C} \partial_{\bar z}G(z)(\tilde H-z)^{-1} (H+1)^{-1}  [x^{k}, H] (H+1)^{-1} (\tilde H-z)^{-1} \, m(dz) 
\end{equation}
Inserting~\eqref{eq:xk H commute} into the right-hand side of~\eqref{eq:HS xk H} and in view of the preceding comments we arrive at an expression of the form 
\[
\frac{1}{\pi}\int_{\C} \partial_{\bar z}G(z)(\tilde H-z)^{-1} (H+1)^{-1}  x^{k-1}p (H+1)^{-1} (\tilde H-z)^{-1} \, m(dz) 
\]
If $k-1=0$ we are done since $p(H+1)^{-1}$ is bounded. Otherwise, commuting $x^{k-1}$ through $(H+1)^{-1} $ to the left reduces the weight by another power.
In other words, one obtains $x^{k-2}$. Because of this reduction of the degree, the process must terminate after at most~$k$ commutations, and we are done
with the proof of the first estimate~\eqref{eq:decaytop22}.

Heuristically speaking, 
 the second bound~\eqref{eq:decaytop1infty} is derived from the first by means of the following principle, known as 
  Bernstein's inequality:  if $\vphi\in L^{2}(\R)$ satisfies 
 $\supp(\hat{\vphi})\subset [-R,R]$ (with $\hat{\vphi}$ being the Fourier transform), then $\vphi\in L^{\infty}(\R)$ with the bound 
 \[
 \|\vphi\|_{\infty} \le \| \hat{\vphi}\|_{1} \le (2R)^{\frac12}\|\vphi\|_{2}
 \]
 where the second inequality is obtained by Cauchy-Schwartz followed by Plancherel's theorem. 
 
 To see the relevance of this, 
 let $g_{I}(H)$ with $g_{I}$ smooth be as above. 
 Since $p^2+V\le 100$
 on the support of $g_I(H)$, one sees -- again at least heuristically --  that also 
 $\del_x^2\le 100\hbar^{-2}$ which restricts the Fourier support to size~$\le C\hbar^{-1}$. These operator inequalities
 can be interpreted in the sense of positive operators, or via quadratic forms, say.   Ignoring the distinction between $H$ and 
  the ``free ''case in which $H=H_{0}:=p^{2}$, we 
  obtain    via Bernstein that 
 \begin{equation}\label{eq:hbar -1/2 loss}
\| \la x\ra^{-\alpha} g_I(H) f\|_\infty \les \hbar^{-\frac12} \|
\la x\ra^{-\alpha}f\|_2
 \end{equation}
  Replacing $L^2$ on the right-hand side costs another $\hbar^{-\frac12}$ by duality, so that one loses $\hbar^{-1}$ in total over the $L^{2}$-bound, 
  which is what~\eqref{eq:decaytop1infty} claims. 
Note that we passed the weight in~$x$ through $g_{I}(H)$ onto $f$ which is another technical issue, next to the distinction between $H$ and~$H_{0}$. 

In oder to rigorously implement these ideas it is advantageous to work with resolvents rather than the (distorted) Fourier transform.  To be specific, we write
\begin{multline}\label{eq:res H}
\la x\ra^{-\alpha} e^{i\frac{t\sqrt{H}}{\hbar}} \chi_I(H) \la x\ra^{-\alpha} \\
= \la x\ra^{-\alpha} (1+H)^{-1} \la x\ra^{\alpha}   \la x\ra^{-\alpha}  e^{i\frac{t\sqrt{H}}{\hbar}}(1+H)^{2} \chi_I(H) \la x\ra^{-\alpha}  \la x\ra^{\alpha} (1+H)^{-1} \la x\ra^{-\alpha}  
\end{multline}
Note that $(1+H)^{2} \chi_I(H)$ is just another cut-off. Therefore,  the $L^{2}$-decay bound applies to 
\[
\la x\ra^{-\alpha}  e^{i\frac{t\sqrt{H}}{\hbar}}(1+H)^{2} \chi_I(H) \la x\ra^{-\alpha} 
\]
and it suffices to prove that 
\begin{equation}\label{eq:2 infty}
\| \la x\ra^{-\alpha} (1+H)^{-1} \la x\ra^{\alpha} f \|_{\infty} \le C(\alpha) \hbar^{-\frac12}\|f\|_{2}
\end{equation}
which by duality then implies the corresponding $L^{1}\to L^{2}$ estimate and thus implies~\eqref{eq:decaytop1infty}.
To prove~\eqref{eq:2 infty} we represent the Green function, i.e., the kernel of $(1+H)^{-1}$,  in the form
\begin{equation}\label{eq:Green}
(1+H)^{-1}(x,x')= \hbar^{-2} \frac{\psi_{+}(x)\psi_{-}(x')}{W(\psi_{+},\psi_{-})} \chi_{[x>x']}+\hbar^{-2} \frac{\psi_{-}(x)\psi_{+}(x')}{W(\psi_{+},\psi_{-})} \chi_{[x<x']}
\end{equation}
with $W$ denoting the Wronskian, and 
where $\psi_{\pm}$ are the Jost solutions to $1+H$ which are defined uniquely by 
\begin{align*}
-\hbar^{2} \psi_{\pm}'' + V\psi &= \psi_{\pm} \\
\psi_{\pm}(x) &\sim e^{\mp \frac{x}{\hbar}} \text{\ \ as\ \ }x\to\pm \infty
\end{align*}
These solutions are given in terms of Volterra integral equations in the form
\begin{equation}\label{eq:psi+}
\psi_{+}(x)= \psi_{+,0}(x) - \hbar^{-1}\int_{x}^{\infty} e^{\frac{x-y}{\hbar}} \, V(y)\psi(y)\, dy
\end{equation}
where $\psi_{\pm,0}:=e^{\mp\frac{x}{\hbar}}$ 
and symmetrically for $\psi_{-}$. 
By the maximum principle (or elementary convexity arguments - recall that $V>0$) one sees that $\psi_{\pm}>0$
on the line.  In view of~\eqref{eq:psi+} therefore $0<\psi_{\pm }<\psi_{\pm,0}$ and the Green function in~\eqref{eq:Green}
satisfies
\[
0< (1+H)^{-1}(x,x') \le C\hbar^{-1} e^{-\frac{|x-x'|}{\hbar}} 
\]
We used here that $W(\psi_{+},\psi_{-})\ge c\hbar^{-1}$ which follows by differentiating and/or evaluating~\eqref{eq:psi+} at $x=0$. 
In conclusion, in order to prove~\eqref{eq:2 infty} we need to show that the kernel 
\[
  \hbar^{-1}  \la x\ra^{-\alpha}  e^{-\frac{|x-x'|}{\hbar}}      \la x'\ra^{\alpha} 
\]
is bounded as an operator from $L^{2}\to L^{\infty}$ with norm $\le C\hbar^{-\frac12}$. But this follows from Cauchy-Schwarz and we are done. 
 \end{proof}

\section{Large energies}

This is comparatively easier than the other two regimes of energies. Indeed,
the energy $E$ is so much larger than the potential that the free case becomes dominant.
Technically speaking, we use the classical WKB ansatz without turning points.

\subsection{The WKB ansatz for large energies}
\label{subsec:WKBlargeE}

We shall use the outgoing Jost solutions $f_+(x,E;\hbar)$ which are defined uniquely as solutions to the equations
\begin{align*}
-\hbar^{2} f_+''(x,E;\hbar) + Vf_+(x,E;\hbar) & = E^{2} f_+(x,E;\hbar) \\
f_+(x,E;\hbar) &\sim e^{\pm i\frac{E}{\hbar}x } \text{\ \ as\ \ }x\to\pm\infty
\end{align*}
A global (at least on $x\ge0$) representation of $f_+(x,E;\hbar)$ is given by the WKB  ansatz
\begin{equation}\label{eq:WKB E large}
f_+(x,E;\hbar) = E^{\frac12} e^{\frac{i}{\hbar}T_+(E;\hbar)} Q^{-\frac14}(x,E;\hbar) e^{\frac{i}{\hbar}\int_0^x \sqrt{Q(y,E;\hbar)}\, dy} (1+\hbar a_+(x,E;\hbar))
\end{equation}
where $Q(x,E;\hbar):= E^2-V(x;\hbar)$ and
\[
T_+(E;\hbar):= \int_0^\infty \big(E-\sqrt{Q(y,E;\hbar})\big)\, dy
\]
The prefactor $E^{\frac12}$ is a convenient normalization, and $T_{+}$ guarantees the correct asymptotics at $x=+\infty$. 
This representation is valid for $x\ge0$, which is justified by the bounds
\begin{equation}
\label{eq:adecay}
|a_+(x,E;\hbar)| \les \la x\ra^{-3} E^{-2} \quad\forall E\ge 100,\;x\ge0
\end{equation}
To obtain these estimates we start from the following equation for~$a(x)$, which is obtained by inserting the ansatz~\eqref{eq:WKB E large} into the defining equation
for $f_{+}$:
\begin{equation}\label{eq:a eq}
\hbar (\psi^2 \dot a)^{\dot{\ }} = -\psi^2 V_2 (1+\hbar a),\quad a(\infty,E;\hbar)=\dot a(\infty,E;\hbar)=0
\end{equation}
where
\[
\psi(x):= Q^{-\frac14}(x,E;\hbar) e^{\frac{i}{\hbar}\int_0^x \sqrt{Q(y,E;\hbar)}\, dy}
\]
and
\begin{align*}
V_2(x) &= \frac{5}{16} \Big(\frac{\dot Q(x)}{Q(x)}\Big)^2 - \frac14 \frac{\ddot Q(x)}{Q(x)}\\
&=  \frac{5}{16} \Big(\frac{\dot V(x)}{E^2-V(x)}\Big)^2 + \frac14 \frac{\ddot V(x)}{E^2-V(x)} = O(E^{-2}\la x\ra^{-4})
\end{align*}
using that $V$ decays at least as fast as an inverse square. 
The solution of~\eqref{eq:a eq} is uniquely given in terms of the 
Volterra integral equation
\[
a(x,E;\hbar) = \int_x^\infty \big(1-e^{\frac{i}{\hbar}\int_x^y \sqrt{Q(u,E;\hbar)}\,du}\big)V_2(y,E;\hbar)(1+\hbar a(y,E;\hbar))\,dy
\]
In addition to~\eqref{eq:adecay}, this integral equation implies the derivative bounds
\begin{equation}\label{eq:a der jk} 
|\del_E^k \del_x^j a_+(x,E;\hbar)| \les \la x\ra^{-3-j} E^{-2-k} \quad\forall E\ge 100,\;x\ge0
\end{equation}
and all $k\ge0, j\ge0$.  While these statements are routine, we now give some indication on how they are obtained. Write
\begin{align*}
a(x,E;\hbar) &= a_{0}(x,E;\hbar) + \hbar \int_x^\infty  k(x,y;\hbar,E)  V_2(y,E;\hbar)  a(y,E;\hbar)\,dy  \\
a_{0}(x,E;\hbar) &:=  \int_x^\infty  k(x,y;\hbar,E)  V_2(y,E;\hbar)  \,dy \\
k(x,y;\hbar,E) &:= 1-e^{\frac{i}{\hbar}\int_x^y \sqrt{Q(u,E;\hbar)}\,du} 
\end{align*}
To see that \eqref{eq:a der jk}  holds for $a_{0}$ we expand the defining integral of~$a_{0}$ as follows:
\begin{multline*}
a_{0}(x,E;\hbar) = \int_{x}^{\infty} V_{2}(y,E;\hbar)  \,dy
 +i\hbar \int_{x}^{\infty}  \partial_{y} \Big[e^{\frac{i}{\hbar}\int_x^y \sqrt{Q(u,E;\hbar)}\,du}\Big] \frac{ V_2(y,E;\hbar)}{\sqrt{Q(y,E;\hbar)}}  \,dy \\
 = \int_{x}^{\infty} V_{2}(y,E;\hbar)  \,dy -i\hbar\frac{ V_{2}(x,E;\hbar)}{\sqrt{ Q(x,E;\hbar)}  } - 
 i\hbar \int_{x}^{\infty} \!\!\! e^{\frac{i}{\hbar}\int_x^y \sqrt{Q(u,E;\hbar)}\,du}\:  \partial_{y} \Big[ \frac{ V_2(y,E;\hbar)  }{\sqrt{Q(y,E;\hbar)} }\Big] \,dy 
\end{multline*}
The first two terms here satisfy the bounds~\eqref{eq:a der jk} by inspection, whereas the integral involving the oscillatory kernel needs to be expanded
further depending on the number of  derivatives, i.e., the size of $j+k$. Note that each further expansion improves the decay of the integrand by one power of~$E$
and~$y$, respectively. 

\subsection{Decay estimates in the regime of large energies}

The WKB considerations of Section~\ref{subsec:WKBlargeE}  imply the following decay estimate. For the definition of the spectral measure $e(E,x,x';\hbar)$
see the low energies regime. 

\begin{lemma}
\label{lem:largeEdecay}
Let $\chi_{>100}(E)$ be a smooth cutoff function supported in $(100,\infty)$ and equal to $1$ on $(200,\infty)$. Then for all $t>0$,
\begin{align*}
& \sup_{x\in\R}\Big|\la x\ra^{-k} \hbar^{-2}\int_{\R}\int_{0}^\infty \cos(\hbar^{-1} tE) 
e(E,x,x';\hbar)\,E \chi_{>100}(E) \, dE\; \la x'\ra^{-k} f(x')\,dx'\Big|\\ &\le C \hbar^{{-2}}\la t\ra^{-k} \int (|f'(y)|+|f(y)|)\, dy \\
&\sup_{x\in\R}\Big|\la x\ra^{-k} \hbar^{-1}\int_{\R}\int_{0}^\infty \sin(\hbar^{-1} tE) e(E,x,x';\hbar) \chi_{>100}(E) \, dE\; \la x'\ra^{-k} f(x')\,dx'\Big| \\&\le C \hbar^{-1}\la t\ra^{-k} \int |f(y)|\, dy
\end{align*}
Moreover, the same bounds hold as weighted $L^2\to L^2$ estimates, but with $\la \cdot\ra^{-k-\frac12-}$ instead of $\la\cdot\ra^{-k}$. 
\end{lemma}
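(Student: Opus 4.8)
The plan is to feed the WKB representation of the Jost solutions from Section~\ref{subsec:WKBlargeE} into the spectral measure \eqref{eq:esemiclass}, and then to estimate the resulting oscillatory integrals by separating the regime near the light cone $\{|x-x'|=t\}$ from its complement.

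\emph{Step 1 (WKB form of the spectral measure).} For $E\ge100$ the potential is a uniformly small perturbation of $E^2$ on the whole line, so \eqref{eq:WKB E large}, together with its mirror image for $f_-$, represents both outgoing Jost solutions globally with the derivative bounds \eqref{eq:adecay}, \eqref{eq:a der jk}. A direct computation of the Wronskian yields $W(f_+,f_-)=-2i\hbar^{-1}E\,e^{\frac i\hbar(T_+(E)+T_-(E))}\,(1+O(E^{-2}))$, and hence, with $\Phi(x,E;\hbar):=\int_0^x\sqrt{E^2-V(y;\hbar)}\,dy$, one obtains for $x'\le x$
\begin{equation}\label{eq:e large form}
 e(E,x,x';\hbar)=\frac{\hbar}{2E}\,\Re\Big[B(E,x,x';\hbar)\,e^{\frac i\hbar(\Phi(x,E;\hbar)-\Phi(x',E;\hbar))}\Big],
\end{equation}
with $B=1+O(E^{-2})$ and $|\del_E^j B|\les E^{-2-j}$ for $j\ge1$, uniformly in $x,x',\hbar$ (the symmetric identity holds for $x'\ge x$). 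Since $V$ and its $x$--antiderivative are $O(\la x\ra^{-2})$ on $x\ge0$ and exponentially small on $x\le0$, one also has the uniform symbol bounds $\del_E\Phi(x,E)=x+O(E^{-2})$ and $\del_E^j\Phi(x,E)=O(E^{-3})$ for $j\ge2$.

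\emph{Step 2 (reduction and light-cone dichotomy).} Inserting \eqref{eq:e large form} and expanding the cosines, the cosine kernel in the lemma becomes $\frac1{8\hbar}$ times a sum of four integrals $\int_0^\infty e^{\frac i\hbar\nu(E;x,x')}\,B^{\pm}\chi_{>100}(E)\,dE$ with phases $\nu(E)=\pm tE\pm(\Phi(x,E)-\Phi(x',E))$, so that $\del_E\nu(E)=\mu+O(E^{-2})$ with $\mu:=\pm t\pm(x-x')$, and $\del_E^j\nu=O(E^{-3})$ for $j\ge2$. Fix $t>0$ and (by symmetry) $x'\le x$. On $\supp\chi_{>100}$ one has $|\del_E\nu-\mu|\le C\,100^{-2}$, so $\nu$ is non-stationary whenever $|\mu|\ge C_1$, and among the four terms only the one with $|\mu|=|t-(x-x')|=\dist(|x-x'|,t)$ can be stationary. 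Accordingly, I split the $(x,x')$-plane into $\mathcal A=\{\dist(|x-x'|,t)\ge\tfrac12\la t\ra\}$ (away from the light cone) and $\mathcal B=\{\dist(|x-x'|,t)<\tfrac12\la t\ra\}$ (near the light cone).

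\emph{Step 3 (the two regions).} On $\mathcal A$ all four $|\mu|\gtrsim\la t\ra$, and one integrates by parts $N$ times in $E$. The only subtlety is that the amplitude $B^{\pm}\chi_{>100}$ does not decay as $E\to\infty$; I therefore write $B^{\pm}\chi_{>100}=\chi_{>100}+(B^{\pm}-1)\chi_{>100}$. The second summand is $O(E^{-2})$ with integrable derivatives, so ordinary integration by parts (vanishing boundary terms) gives $\les_N(\hbar/\la t\ra)^N$; for the first, using $\chi_{>100}(100)=0$ and interpreting the conditionally convergent integral as a Ces\`aro limit (which kills the $E=\infty$ boundary term), one IBP turns $\int e^{\frac i\hbar\nu}\chi_{>100}\,dE$ into $-\hbar\int e^{\frac i\hbar\nu}\del_E\big(\tfrac{\chi_{>100}}{i\nu'}\big)\,dE$, whose amplitude is now integrable, and iterating gives $\les_N(\hbar/\la t\ra)^N$ as well. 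Thus on $\mathcal A$ the kernel is a bounded function which is $\les_N\hbar^{N-1}\la t\ra^{-N}$, and choosing $N$ large the contribution of $\mathcal A$ to the left side is $\les\hbar^{-2}\la t\ra^{-k}\|f\|_1$. On $\mathcal B$ one has $|x-x'|\ge t-\tfrac12\la t\ra\gtrsim\la t\ra$, hence $\max(|x|,|x'|)\gtrsim\la t\ra$ and so $\la x\ra^{-k}\la x'\ra^{-k}\les\la t\ra^{-k}$ for the relevant $x'$. Here I isolate the leading term: replacing $B^{\pm}$ by $1$ and $\Phi(x,E)-\Phi(x',E)$ by $E(x-x')$ produces exactly a constant multiple of the high-frequency part of the free propagator $\cos(t\sqrt{-\del_x^2})$, whose kernel --- because $\chi_{>100}$ is \emph{smooth} --- equals $c[\delta(x-x'-t)+\delta(x-x'+t)]+\mathcal R_t(x,x')$ with $\sup_x\|\mathcal R_t(x,\cdot)\|_\infty\les\hbar^{-1}$, $\sup_x\|\mathcal R_t(x,\cdot)\|_{L^1}\les1$, and $|\mathcal R_t(x,x')|\les_N\hbar^{-1}\min\big(1,(\hbar^{-1}\dist(|x-x'|,t))^{-N}\big)$. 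The two $\delta$'s are absorbed using $\|f\|_\infty\le\|f'\|_{L^1}$ together with $\la x\ra^{-k}\la x\mp t\ra^{-k}\les\la t\ra^{-k}$ --- this is the sole place where $f'$ enters --- and $\mathcal R_t$ is handled by combining the weight gain $\la x\ra^{-k}\la x'\ra^{-k}\les\la t\ra^{-k}$ (valid near the light cone) with the rapid decay of $\mathcal R_t$ away from it, giving $\les\hbar^{-1}\la t\ra^{-k}\|f\|_1$. The error from $B^{\pm}-1=O(E^{-2})$ and from the phase correction $\Phi(x,E)-\Phi(x',E)-E(x-x')=O(E^{-1})$ (whose $E$-derivatives gain further powers of $E^{-1}$) only fuzzes out the light cone over an $O(1)$-strip, on which the weight gain still applies, and is absorbed exactly as above.

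\emph{Step 4 (sine estimate, $L^2$ bound, and the main obstacle).} For the second estimate the amplitude carries an additional factor $E^{-1}$, so the leading term reconstructs the high-frequency part of $\sin(t\sqrt{-\del_x^2})/\sqrt{-\del_x^2}$, whose smooth-cutoff kernel contains \emph{no} $\delta$ on the light cone, only a function that is $O(1)$ and rapidly decaying in $\hbar^{-1}\dist(|x-x'|,t)$; the same dichotomy then yields the bound with $\int|f|$ replacing $\int(|f'|+|f|)$ (the $\hbar^{-1}$ prefactor being ample slack). The weighted $L^2\to L^2$ statements follow from the identical kernel analysis: with the weights $\la\cdot\ra^{-k-\frac12-}$ the $\delta$-terms are translation-and-multiplication operators of norm $\sup_x\la x\ra^{-k-\frac12-}\la x\mp t\ra^{-k-\frac12-}\les\la t\ra^{-k}$, and the remainders are estimated by Schur's test, the extra factor $\la\cdot\ra^{-\frac12-}$ supplying the needed integrability. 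The genuinely non-routine point is precisely the non-decay of the amplitude at $E=\infty$: it forces one to view the $E$-integrals as conditionally convergent distributions whose kernels carry $\delta$-measures on the light cone, and the whole argument hinges on recognizing the leading contribution as the high-frequency part of the free wave propagator, on the smoothness of $\chi_{>100}$ (for decay off the light cone), and on the embedding $W^{1,1}(\R)\hookrightarrow L^\infty(\R)$ (to pair the $\delta$'s against $\int(|f'|+|f|)$); the rest is standard non-stationary phase and weight bookkeeping.
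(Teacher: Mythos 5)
Your proposal is correct and spells out, in essentially the form one would expect, the argument that the paper itself delegates to Section~9 of~\cite{DSS}: represent the Jost solutions and hence the spectral measure by the global WKB ansatz~\eqref{eq:WKB E large} (which is indeed turning-point free for $E\ge100$, since $\sup V=1\ll E^2$), split off the free phase $E(x-x')$, and perform the light-cone dichotomy --- non-stationary phase away from $\{|x-x'|=t\}$, reconstruction of the high-frequency free propagator near it, with the $\delta$'s on the light cone paid for by the weight gain $\la x\ra^{-k}\la x'\ra^{-k}\lesssim \la t\ra^{-k}$ and by $W^{1,1}\hookrightarrow L^\infty$ (whence the $|f'|$ in the cosine bound and its absence in the sine bound). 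Your bookkeeping of the $\hbar$-powers ($\hbar^{-1}$ from $e=\frac{\hbar}{2E}\Re[\cdots]$ combined with the $\hbar^{-2}$ or $\hbar^{-1}$ prefactor, and $\hbar^{-1/2}$ worth of slack on the $L^2$ side) matches the target constants with room to spare. One place you work harder than necessary: for the weighted $L^2\to L^2$ estimates you rerun the kernel analysis with the stronger weights and invoke Schur's test, whereas the paper observes that they follow immediately from the $L^1\to L^\infty$ bounds by two applications of H\"older --- the extra $\la\cdot\ra^{-\frac12-}$ weight on each side is exactly what is needed to pass $L^2\to L^1$ and $L^\infty\to L^2$, so no new kernel estimates are required. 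Finally, your treatment of the error on $\mathcal B$ (the phase correction $\Phi(x,E)-\Phi(x',E)-E(x-x')=O(E^{-1})$) is stated as ``only fuzzes out the light cone'' --- this is the right intuition, but note that one cannot simply Taylor-expand $e^{-i\eta/\hbar}$ since $\eta/\hbar$ is not small; the rigorous version exploits that $\del_E\eta=O(E^{-2})$ shifts the stationary point of $\nu$ by at most $O(100^{-2})$, so the non-stationary-phase region $|t-(x-x')|\gtrsim 1$ is unchanged and the stationary set stays inside the strip where the weight gain holds. That is a gap in rigor but not in idea, and it is precisely the kind of detail the paper defers to~\cite{DSS}.
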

\begin{proof}
This is essentially the same as in Section~9 of~\cite{DSS}. The only difference being the factor~$\hbar$.
However, we leave it to the reader to check that the proofs in~\cite{DSS} easily carry over to this case
as well. 
As for the $L^2\to L^2$ bounds, they follow from the $L^1\to L^\infty$ ones  by means of H\"older's inequality.
\end{proof}

\section{The proof of Theorems~\ref{thm:main} and~\ref{thm:main2}}\label{sec:proof}

We begin by reducing general data to those of fixed angular momentum. Thus
\begin{equation}
 \psi_0(x,\omega) = \sum_{\ell=0}^\infty \sum_{-\ell\le j\le \ell}
  \la \psi_0(x,\cdot), Y_{\ell,j}(\cdot)\ra_{S^2} Y_{\ell,j}(\omega)
\label{eq:Yelldecomp}
\end{equation} 
where $\{Y_{\ell,j}\}_{j=-\ell}^\ell$ is the usual orthonormal basis  of spherical harmonics in the space of $Y\in C^\infty(S^2)$ with $-\Delta_{S^2} Y= \ell(\ell+1)Y$. 
One has $\|Y_{\ell,j}\|_\infty \le C\la \ell\ra^{\frac12}$  where $C$ is an absolute constant. 
 Now let $Y$ be a normalized spherical harmonic with $-\Delta_{S^2}Y=\ell(\ell+1)Y$, and set $\hbar=\ell^{-1}$.  Consider 
 data $\psi[0]=(f,g)Y=(\psi_0,\psi_1)$. 
Let $\psi(t)$ denote the evolution of~$\psi[0]$ under the wave equation~\eqref{eq:wave}, as given by~\eqref{eq:semievol} and~\eqref{eq:sineevol}. Then by 
Lemma~\ref{lem:largeEdecay}, Corollary~\ref{cor:decay_top}, and Proposition~\ref{prop:displow}, provided $\ell$ is large, one obtains 
\begin{align}
& \| \la x\ra^{-k-\frac12-}  \psi(t) \|_{L^2(\R;L^2(S^2))} \nn \\&\les \la t\ra^{-k} \hbar^{-k-1}  \| \la x\ra^{k+\frac12+} (\hbar^{-1} \del_x\psi_0, \hbar^{-1} \psi_0,  \psi_1) \|_{L^2(\R;L^2(S^2))} \nn 
\end{align}
for any $0\le k\ll \ell$ and $t\ge0$.  Starting from general data $\psi[0]=(\psi_0,\psi_1)$, performing a decomposition as in~\eqref{eq:Yelldecomp}
we may sum up the $L^2$-bound over $\ell\gg k$, whereas for the finitely many remaining~$\ell$ we invoke the decay estimates from~\cite{DS} (for $\ell=0$)
and~\cite{DSS} (for $\ell>0$). In this way one obtains~\eqref{eq:decaywaveL2}. The reason why $\la x\ra^{-\frac92-}$ weights are required stems from the fact the
corresponding $L^1\to L^\infty$ bounds in~\cite{DS} and~\cite{DSS} need $\la x\ra^{-4}$ for $t^{-3}$ decay, and then we lose another $\la x\ra^{-\frac12-}$ due to H\"older's
inequality. On another technical note, the weights $\la x\ra^{-k-\frac12-}$ for $k=3$ (as in our case) essentially retain the orthogonality properties of the spherical harmonics which allows one to sum up the fixed~$\ell$
bounds without any losses in~$\ell$. 

For the pointwise bounds we write \eqref{eq:Yelldecomp} in the form 
\[
 \psi[0](x,\omega) =(\psi_0,\psi_1)(x,\omega)= \sum_{\ell=0}^\infty \sum_{-\ell\le j\le \ell} (f_{\ell,j}(x),g_{\ell,j}(x))Y_{\ell,j}(\omega)
\]
The evolution of these data is given by
\[
 \psi(t,x,\omega) = \sum_{\ell=0}^\infty \sum_{-\ell\le j\le \ell} \psi_{\ell,j}(t,x)Y_{\ell,j}(\omega)
\]
where $\psi_{\ell,j}$ is the evolution of $(f_{\ell,j},g_{\ell,j})$ under \eqref{eq:ellwave}. Therefore, setting $f_{\ell,j}=0$ for ease of notation, 
and using the bound $\|Y_{\ell,j}\|_\infty\les \ell^{\frac12}$ yields 
\begin{align*}
 &\| \la x\ra^{-4}  \psi(t) \|_{L^\infty(\R;L^\infty(S^2))} \\ &\les \sum_{\ell=0}^\infty \la \ell\ra^{\frac12} \sum_{-\ell\le j\le \ell} \| \la x\ra^{-4} \psi_{\ell,j}(t)\|_{L^\infty_x} \\ 
&\les \la t\ra^{-3} \sum_{\ell=0}^\infty \la \ell\ra^{\frac{11}{2}} \sum_{-\ell\le j\le \ell} \|  \la x\ra^{4}  g_{\ell,j}(x) \|_{L^1_x}  \\
&\les \la t\ra^{-3} \sum_{\ell=0}^\infty \la \ell\ra^{-\frac{7}{2}} \sum_{-\ell\le j\le \ell} \| \la x\ra^{4}\la (-\Delta_S)^{\frac{9}{2}}\psi_0(x,\omega), Y_{\ell,j}(\omega)\ra_{S^2}   \|_{L^1_x} 
\end{align*}
where we invoked the pointwise bounds of Lemma~\ref{lem:largeEdecay}, Corollary~\ref{cor:decay_top}, and Proposition~\ref{prop:displow} for large~$\ell$,
and~\cite{DS} and~\cite{DSS} for the remaining~$\ell$. This bound can now be summed since 
\begin{align*}
 &\sum_{\ell=0}^\infty \la \ell\ra^{-\frac{7}{2}} \sum_{-\ell\le j\le \ell} \| \la x\ra^{4} \la (-\Delta_S)^{\frac{9}{2}}\psi_0(x,\omega), Y_{\ell,j}(\omega)\ra_{S^2}   \|_{L^1_x} \\
 &\les \|\la x\ra^{4} (-\Delta_S)^{\frac{9}{2}}\psi_0(x,\omega)\|_{L^1_{x,\omega}}
\end{align*}
This implies the estimate \eqref{eq:decaywaveL1} and Theorem~\ref{thm:main} is proved.  The proof of Theorem~\ref{thm:main2} is analogous.

\section{Semiclassical Sigal-Soffer propagation estimates}\label{sec:HSS}

In this section we present a semiclassical version of the abstract
 theory from~\cite{HSS}. Our arguments are very close
to~\cite{HSS}, but some care is required in keeping track of powers of~$\hbar$. 
The main result is as follows.  In this section $H$
and $A$ are self-adjoint operators on a Hilbert space. $H=H(\hbar)$
and $A=A(\hbar)$ depend on a small parameter $\hbar\in(0,\hbar_0]$
but with domains independent of~$\hbar$. We assume the bounds
\begin{equation}
  \label{eq:comm_ass} \|\ad_A^k(g(H))\|\le C(k,g) \hbar^k
\end{equation}
for all $k\ge0$, $\hbar\in(0,\hbar_0]$ and smooth, compactly
supported functions~$g$ on the line. As usual, $\ad_A^k(g(H))$ are
the $k$-fold iterated commutators defined inductively as
$\ad_A^1(g(H))=[g(H),A]$ and \[ \ad_A^{k}(g(H))=[\ad_A^{k-1}(g(H)),A] \qquad \forall \;k\ge2.\]

\begin{prop}
   \label{prop:HSS}  Suppose $I\subset \R$ is a compact interval so
   that\footnote{It would be perhaps more natural to expect $\chi_I(H) \frac{i}{\hbar} [H,A] \chi_I(H)\ge
     \theta \chi_I(H)$, see~\cite{Graf}, \cite{HisNak}. The loss of an $\hbar$ in the lower bound is due to the fact that we
     establish
     the Mourre estimate at an energy which {\em is trapping}, namely the top of the potential barrier. }
   \begin{equation}
     \label{eq:mourre} \chi_I(H) \frac{i}{\hbar} [H,A] \chi_I(H)\ge
     \theta\hbar \chi_I(H)
   \end{equation}
for some $\theta>0$. Both $I$ and $\theta$ are independent
of~$\hbar$. Then for any smooth $g_I$ with support in~$I$ one has for all $t\in\R$ 
\begin{equation}
  \label{eq:HSS_decay} \| \la   A\ra^{-\alpha} e^{-i\frac{tH}{\hbar}}
  g_I(H) \la   A\ra^{-\alpha} f\| \le C\, \la \hbar t\ra^{-\alpha} \| f\|
\end{equation}
for any $\alpha\ge0$ where $C$ depends on $\alpha$, $\theta$, $g_I$
and~$I$, but not on~$\hbar$. Moreover, $\hbar_0$ needs to be taken
sufficiently small depending  on these parameters.
\end{prop}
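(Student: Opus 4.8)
The plan is to adapt the positive-commutator (propagation observable) machinery of~\cite{HSS} to the semiclassical normalization, using $\tau:=\hbar t$ as the effective time variable. The single new bookkeeping point is that $e^{-itH/\hbar}$ produces an extra $\hbar^{-1}$ in every commutator with $H$, which one must see cancelled against the gain $\hbar^{k}$ that a $k$-fold commutator enjoys by virtue of~\eqref{eq:comm_ass}; together with standard functional calculus, \eqref{eq:comm_ass} upgrades to $\|g(H)\,\ad_A^k(H)\,g(H)\|\le C(k,g)\hbar^{k}$ for smooth compactly supported $g$, and this is what actually gets used.

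First I would reduce. The case $\alpha=0$ is unitarity, and complex interpolation reduces~\eqref{eq:HSS_decay} to $\alpha$ a positive integer. Writing $U_t:=e^{-itH/\hbar}$ and using the elementary bound $\|\la A\ra^{-\alpha}\eta(A/\tau)\|\les\la\tau\ra^{-\alpha}$ for any bounded $\eta$ vanishing near $0$, a double cutoff decomposition of the two weights (keeping $g_I(H)$ in the middle, which commutes with $U_t$) reduces~\eqref{eq:HSS_decay}, for $t\ge0$ and after fixing $0<\delta<c_1$ with $\delta+c_1<\theta$, to the \emph{non-concentration estimate}
\[
\big\|\chi_-(A/\tau)\,U_t\,g_I(H)\,\chi_+(A/\tau)\big\|\le C_N\,\la\tau\ra^{-N}\qquad(t\ge0,\ \forall N),
\]
where $\chi_-$ is a smooth cutoff to $(-\infty,\delta]$ and $\chi_+$ a smooth cutoff to $[-c_1,\infty)$; the case $t\le0$ is symmetric under $A\mapsto-A$. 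For $t\les\hbar^{-1}$ one has $\la\tau\ra\simeq1$, so this is only of substance when $t\gtrsim\hbar^{-1}$.

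To prove the non-concentration estimate I would introduce the time-dependent observable $\Phi(t):=g_I(H)\,F(A/\tau)\,g_I(H)$, where $F$ is smooth, nonincreasing, with $F\equiv1$ on $(-\infty,\delta']$ and $F\equiv0$ on $[\delta'',\infty)$ for some ($\hbar$-independent) $\delta<\delta'<\delta''<\theta$, and compute its Heisenberg derivative $\mathbf D\Phi=\partial_t\Phi+\tfrac i\hbar[H,\Phi]$. Using $\partial_t(A/\tau)=-\tfrac1t(A/\tau)$ and the commutator expansion formula for $[H,F(A/\tau)]$, one obtains
\[
\mathbf D\Phi(t)=\tfrac{1}{2t}\,g_I(H)\Big(F'(A/\tau)\big[\tfrac{i}{\hbar^{2}}[H,A]-\tfrac A\tau\big]+\mathrm{h.c.}\Big)g_I(H)+R(t),
\]
where $R(t)$ gathers the higher-order commutator terms. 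On the range of $g_I(H)$ the Mourre bound~\eqref{eq:mourre} gives $\tfrac{i}{\hbar^{2}}[H,A]\ge\theta$, while on $\supp F'$ one has $A/\tau\le\delta''<\theta$; since $F'\le0$, the displayed main term is a nonpositive operator which moreover dominates $-\tfrac{c}{t}\,g_I(H)\,G(A/\tau)\,g_I(H)$ for some $c>0$, with $G:=-F'\ge0$. The remainder $R(t)$: its $k$-th piece ($k\ge2$) is morally $\tau^{-k}F^{(k)}(A/\tau)\cdot\tfrac i\hbar\,\ad_A^k(H)$, which after sandwiching with slightly fatter cutoffs and invoking $\|g_I(H)\ad_A^k(H)g_I(H)\|\les\hbar^k$ together with $\tau^{-k}=(\hbar t)^{-k}$ is of size $\hbar^{-1}t^{-2}\les t^{-1}$ in the regime $t\gtrsim\hbar^{-1}$. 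I would then run the standard~\cite{HSS} bootstrap: with a nested family of cutoffs $F=F_0\prec F_1\prec\cdots$, integrating the resulting differential inequality converts the nonpositive main term into an integrated (Kato-type) local decay bound, while the inductive hypothesis on the decay order controls $R$; a second pass with $F_1$ in place of $F_0$ upgrades integrated decay to pointwise decay, gaining one factor of $\la\tau\ra^{-1}$ per round. Finally $\hbar_0$ is shrunk so that the (finitely many, for fixed $\alpha$) smallness conditions implicit in the commutator expansions hold uniformly.

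The hard part will be the uniform $\hbar$-accounting rather than anything conceptual. Because of the $e^{-itH/\hbar}$ normalization each commutator error is a priori a factor $\hbar^{-1}$ worse than in~\cite{HSS}, and only the gain $\hbar^k$ from~\eqref{eq:comm_ass} in $k$-fold commutators — combined with the fact that the claimed decay is nontrivial precisely when $\hbar t\gtrsim1$ — keeps $R(t)$ below the good term. Keeping the cutoff $F$ supported strictly below the Mourre velocity, preserving the gap $\delta+c_1<\theta$ throughout the nested family, and checking that each step of the induction really gains a full power of $\la\hbar t\ra^{-1}$ uniformly in $\hbar\in(0,\hbar_0]$ is where the work lies.
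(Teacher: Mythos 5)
Your high-level strategy — propagation observable, Mourre positivity, the $\hbar^{k}$ gain in iterated commutators compensating the $\hbar^{-1}$'s coming from $\frac{i}{\hbar}[H,\cdot\,]$, and reducing to a minimal-velocity estimate — is the same as the paper's. The paper's Lemma~\ref{lem:2.1} is the one-sided Heisenberg-derivative bound (including the interchange of $g_I(H)$ with the $A$-cutoff to invoke the Mourre bound, which you only gesture at with ``sandwiching''), and its Lemma~\ref{lem:prop_est} is the minimal-velocity estimate. Your $\hbar$-accounting for the remainder (the $k$-th term contributing $\tau^{-k}\cdot\hbar^{-1}\cdot\hbar^{k}=\hbar^{-1}t^{-k}$) is correct and matches the paper.

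The genuine gap is the concrete propagation observable. You take $\Phi(t)=g_I(H)F(A/\tau)g_I(H)$ with $\tau=\hbar t$ the \emph{running} time and $F$ a decreasing cutoff vanishing on $[\delta'',\infty)$. This degenerates as $t\to0$, and — more seriously — the bootstrap needs a small starting value $\la\Phi(t_0)\ra_{t_0}$. Since $F(\,\cdot\,/\hbar t_0)$ is supported in $\{A\le\delta''\hbar t_0\}$ while your initial localizer $\chi_+(A/\tau)$ lives on $\{A\ge-c_1\tau\}$, the supports always overlap (both contain a neighborhood of $A=0$) and $\la\Phi(t_0)\ra_{t_0}=O(1)$, so the induction cannot get off the ground. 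You also cannot replace this by a terminal condition $\la\Phi(\infty)\ra=0$, since quantifying that requires precisely the propagation estimate you are trying to prove. The paper avoids this by using $A_{s,t}=(\hbar s)^{-1}(A-a-\hbar\theta t)$ with the scale $s$ held \emph{fixed} while $t$ ranges over $[0,s]$ (setting $s=t$ only at the end); at $t=0$ the cutoff $f(A_{s,0})$ lives on $\{A\le a\}$, cleanly disjoint from $\chi^+(A-a)$, so $\la\phi_s(0)\ra_0=O(s^{-2n})$ by a commutator expansion, and the induction in the decay order can be run. You need to either adopt a translated observable with a decoupled scale parameter, or at least build a drift $-\hbar\theta t$ and a shift $a$ into your $F$ so that the supports separate at $t=0$. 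A minor point: the interpolation to integer $\alpha$ is unnecessary — once the minimal-velocity bound carries arbitrary polynomial decay, the two-sided weight splitting you describe yields~\eqref{eq:HSS_decay} for all $\alpha\ge0$ directly, which is exactly the paper's final step.
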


The proof requires some preparatory work. First, recall the
commutator expansion formula going back to~\cite{SigSof1},
\cite{SigSof2}, and subsequently refined in~\cite{HunzSig},
\cite{Skib}, \cite{IvSig}, \cite{AmBouGeo}:
\begin{equation}
\label{eq:comm_exp} [g(H),f(A)] = \sum_{k=1}^{n-1}
\frac{f^{(k)}(A)}{k!}\ad_A^k(g(H)) + R_n
\end{equation}
where $f,g$ are smooth, compactly supported functions on the line
and the error $R_n$ satisfies the bound
\begin{equation}
\|R_n\|\le C_n \|\ad_A^n(g(H))\| \sum_{k=0}^{n+2} \int \la
x\ra^{k-n-1} |f^{(k)}(x)|\, dx \label{eq:err_est}
\end{equation}
with a constant $C_n$ depending {\em only} on~$n\ge1$. This error
bound is obtained by means of the {\em Helffer-Sj\"ostrand} formula
involving almost analytic extensions of~$f$, see~\cite[Chapter~2]{Davies}.  
For the expansion~\eqref{eq:comm_exp} and the error bound~\eqref{eq:err_est}
see Appendix~B in~\cite{HunzSig}, in particular (B.8) and~(B.14). 

 In particular, if $f$ {\em is of order at most $p$}
meaning that $f$ is a smooth function on the line obeying the bound
\[|f^{(k)}(x)|\le C_k \la x\ra^{p-k}\] for each~$k\ge0$,
then~\eqref{eq:comm_exp} can be applied with $n>p$.

We now proceed as in~\cite{HSS}. Throughout this section, the
assumptions of Proposition~\ref{prop:HSS} will be in force.

\begin{lemma}
\label{lem:2.1} Let $f\ge0$ be or order $p<4$, nonincreasing and
with $f(x)=0$ for $x\ge0$. Furthermore, assume that $f=f_2^4$ and
$-f_2f_2'=f_3^2$ where $f_2,f_3$ are smooth.  Let $1\le s<\infty$,
$a\in\R$, $A_s:=(\hbar s)^{-1}(A-a)$, and fix  $\eps\in(0,1]$ as well as
$n\ge2$. Then with $g_I$ as above
\begin{multline}
g_I(H) {i}[H,f(A_s)]g_I(H) \le s^{-1} \hbar  \theta g_I(H) f'(A_s) g_I(H) 
 + s^{-1-\eps} g_I(H) f_1(A_s) g_I(H) \\
 + 
s^{-(2n-1-\eps)} g_I^2(H)  \label{eq:HSS_step1}
\end{multline}
uniformly in $a\in\R$ and $\hbar\in(0,\hbar_0]$. Here, $f_1$ is of the same 
order $p<4$ as $f$, and vanishes on $x\ge0$, and it depends
only on $f,g_I$ and~$n$.
\end{lemma}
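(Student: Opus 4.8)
The plan is to follow the strategy of~\cite{HSS} (see their Lemma~2.1) closely, tracking carefully the extra powers of $\hbar$ coming from the semiclassical scaling $A_s=(\hbar s)^{-1}(A-a)$ and from the Mourre bound~\eqref{eq:mourre}, which carries an $\hbar$ on its right-hand side. First I would compute the commutator directly: since $f(A_s)$ is a function of $A$, one has
\[
i[H,f(A_s)] = \frac{1}{\hbar s}\, (f'(A_s))^{?}\, i[H,A] + \text{(correction terms)},
\]
but this naive substitution is illegitimate because $[H,A]$ does not commute with $f'(A_s)$. The correct device is to write, using the factorization hypotheses $f=f_2^4$ and $-f_2 f_2'=f_3^2$, a symmetrized identity of the form
\[
i[H,f(A_s)] = \frac{1}{\hbar s}\Big( f_2^2(A_s)\, i[H,A]\, f_2^2(A_s)\Big)^{\mathrm{sym}} + \text{commutator errors},
\]
where the errors are of the form $[f_2^2(A_s),[f_2^2(A_s),i[H,A]]]$ type expressions. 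This is exactly the point of requiring $f$ to be a fourth power: one can split the derivative $f'=-4f_3^2 f_2^2/f_2\cdot(\dots)$—more precisely $f'(A_s)$ factors through $f_2$ and $f_3$—so that the leading term is manifestly of the form $G^*\,i[H,A]\,G$ with $G=f_2^2(A_s)g_I(H)$ (up to commuting $g_I(H)$ past $f_2^2(A_s)$), to which the Mourre estimate~\eqref{eq:mourre} applies and produces the good term $s^{-1}\hbar\theta\, g_I(H)f'(A_s)g_I(H)$.

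Next I would estimate the error terms. These are double and higher commutators of $f_2^2(A_s)$ (a function of $A$ of order $p/2<2$, with $k$-th derivative of size $(\hbar s)^{-k}\la A_s\ra^{p/2-k}$) with $i[H,A]$, all sandwiched between $g_I(H)$. Here the commutator expansion formula~\eqref{eq:comm_exp}, together with the hypothesis~\eqref{eq:comm_ass} which gives $\|\ad_A^k(g_I(H))\|\le C\hbar^k$, is the main tool: each commutator of $g_I(H)$ with a function of $A$ of order $q$ gains one factor $\hbar$ from $\ad_A g_I(H)$ against one factor $(\hbar s)^{-1}$ from differentiating the function, so the net gain per commutator is $s^{-1}$ (the $\hbar$'s cancel), while lowering the order. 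Iterating $n$ times and using $1\le s<\infty$ one collects, after inserting $g_I(H)$ and using $g_I\tilde g_I=g_I$ with a slightly fatter cutoff $\tilde g_I$, a term of order $p$ vanishing on $x\ge0$ with prefactor $s^{-1-\eps}$ (from one extra commutator beyond the leading order, costing $s^{-1}$, plus absorbing a harmless $s^{-\eps}$ to get room), and a genuine remainder $R_n$ bounded via~\eqref{eq:err_est} by $s^{-(2n-1-\eps)}$ times $\|g_I^2(H)\|$. The bookkeeping of exactly which power of $s$ attaches to which term—and arranging the $\eps$ so that the middle term is $s^{-1-\eps}$ and the last is $s^{-(2n-1-\eps)}$—is the one genuinely fiddly part.

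The main obstacle I expect is the algebraic manipulation that recasts $i[H,f(A_s)]$ in the symmetric form $\sim G^*\,i[H,A]\,G + \text{(lower order)}$ with $G$ built from $f_2,f_3$; this is precisely where the structural hypotheses $f=f_2^4$, $-f_2f_2'=f_3^2$ are consumed, and getting the signs and the factor of $\theta$ right in the leading term requires care. Everything else is a disciplined application of~\eqref{eq:comm_exp}, \eqref{eq:err_est}, \eqref{eq:comm_ass} and~\eqref{eq:mourre}, with the semiclassical scaling contributing only the uniform-in-$\hbar$ cancellation of $\hbar$ against $(\hbar s)^{-1}$ noted above. Since the paper explicitly says the arguments are ``very close to~\cite{HSS}'', I would present the leading-term identity in detail and then indicate that the error estimates follow from the commutator calculus exactly as there, being careful only to point out where the uniformity in $\hbar$ and in $a$ comes from.
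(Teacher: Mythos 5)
Your overall strategy coincides with the paper's: expand $i[H,f(A_s)]$ via the commutator calculus, isolate a symmetrized leading term to which the Mourre bound \eqref{eq:mourre} applies, and control the remainder using \eqref{eq:comm_ass} together with the observation that each further commutation trades one $\hbar$ (from $\ad_A$) for one $(\hbar s)^{-1}$ (from differentiating the function of $A_s$), for a net gain of $s^{-1}$. That bookkeeping observation is exactly right and is the crux of the semiclassical uniformity.

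There is, however, a concrete algebraic error in your identification of the leading symmetric term. You propose
\[
i[H,f(A_s)] \approx \frac{1}{\hbar s}\Big(f_2^2(A_s)\,i[H,A]\,f_2^2(A_s)\Big)^{\mathrm{sym}},
\]
i.e.\ with $G=f_2^2(A_s)g_I(H)$. But $f_2^2=\sqrt{f}$, and $i[H,f(A_s)]$ to leading order is $(\hbar s)^{-1}f'(A_s)\,i[H,A]$ (symmetrized), so the correct sandwich is by $(-f')^{1/2}(A_s)$, not by $f^{1/2}(A_s)$. This is precisely where the structural hypotheses are consumed: writing $F:=\sqrt{f}=f_2^2$ and $u:=f_2$, $v^2:=-F'=2f_3^2$, one has $-f'=2(uv)^2$, so the symmetric form is $-s^{-1}\,(uv)(A_s)\,B_1\,(uv)(A_s)$ with $B_1:=i\hbar^{-1}\ad_A(H_b)$; after commuting past the cutoffs and applying Mourre, this yields $s^{-1}\hbar\theta\,g_I(H)f'(A_s)g_I(H)$. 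Your formula, if pushed through, would produce $\theta\,g_I(H)f(A_s)g_I(H)$, which has the wrong sign ($f\ge0$, $f'\le0$) and no factor $s^{-1}$, so the propagation argument in Lemma~\ref{lem:prop_est} would not close.

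Two further points you glossed over that the paper's proof needs. First, since $H$ is unbounded, one must replace $H$ by $H_b:=Hb(H)$ for a suitable cutoff $b$ with $bg_I=g_I$ before invoking \eqref{eq:comm_ass}, so that the iterated commutators $B_k=i\hbar^{-k}\ad_A^k(H_b)$ are genuinely bounded uniformly in $\hbar$; your version commutes with $H$ directly, which requires justification. Second, after symmetrizing, one still has to show that $G(H)\eta B_1\eta G(H)$ and $\eta G(H)B_1 G(H)\eta$ (with $\eta:=(-f')^{1/2}$ and $G$ a fattened cutoff with $g_IG=g_I$) differ by a remainder subsumable into the $s^{-1-\eps}$ and $s^{-(2n-1-\eps)}$ terms; this uses the expansion of $[G(H),\eta(A_s)]$, and is where the $\eps$ enters the bookkeeping. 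With the leading-term identity corrected and these two steps supplied, your plan matches the paper's proof.
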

\begin{proof}
We replace $H$ with $H_b:=H b(H)$ where $b$ is a smooth cutoff
function with $bg=g$ (for simplicity, we write $g$ instead of $g_{I}$). Then
\[
B_k := i \hbar^{-k}\ad_A^k(H_b),\quad k\ge1
\]
satisfy the bounds $\|B_k\|\le C_k$ by assumption for all $k\ge1$.
We begin by showing that
\begin{equation}\label{eq:claim1}
i[H_b, f(A_s)] \simeq -s^{-1} (-f'(A_s))^{\frac12} B_1
(-f'(A_s))^{\frac12}
\end{equation}
where $\simeq$ throughout this proof will mean equality up to
addition of a quadratic form remainder  $\rem=\rem(s)$ satisfying the bound
\[
\pm \rem(s) \le  s^{-(1+\eps)} f_1(A_s) + s^{-(2n-1-\eps)} \mathrm{Id}
\]
uniformly in $\hbar, a$ and with $f_1$ as above. Clearly, any term
of the form $\rem$ is admissible for the lemma and can be ignored.
Write $f=F^2$ and expand by means of~\eqref{eq:comm_exp}
\begin{align}
i[H_b,f(A_s)] &= i[H_b,F(A_s)]F(A_s) + F(A_s)i[H_b,F(A_s)]  \nn \\
&= \sum_{k=1}^{n-1} \frac{1}{k!}  s^{-k} (F^{(k)}(A_s) B_k F(A_s) + F(A_s)B_k^* F^{(k)}(A_s)) \label{eq:sumkAs} \\
 &\qquad +  s^{-n}  (R_nF(A_s)+F(A_s)R_n^*)   \label{eq:RnFAs}
\end{align}
From \eqref{eq:err_est} and since $n\ge2$ and $F$ is of order $<2$,
one concludes that $R_n$ is bounded uniformly in $s,a,\hbar$. We now 
now claim  that only the term $k=1$ is significant, i.e., 
\begin{equation}\label{eq:reduc1}
i [H_b,f(A_s)] \simeq s^{-1} (F'(A_s)B_1 F(A_s) + F(A_s)B_1
F'(A_s))
\end{equation}
Indeed, we first check that  the terms in~\eqref{eq:sumkAs} for $k\ge2$ are subsumed in
the $f_1$ expression of~$\rem$. To see this, we note that
\begin{multline}\label{eq:psi bd}
| \lan F^{(k)}(A_s) B_k F(A_s) \psi,\psi\ran|  \le \|B_{k}\| \| F(A_{s})\psi\|_{2}\|F^{(k)}(A_s)\psi\|_{2}  \\
\le C \sqrt{ \lan F(A_{s})^{2}\psi, \psi\ran  \lan F^{(k)}(A_{s})^{2}\psi, \psi\ran } \le  \lan f_{1}\psi,\psi\ran 
\end{multline}
provided $f_{1}$ is an upper envelope for both $F^{2}$ and $(F^{(k)})^{2}$ with some multiplicative constant. 
Second, for~\eqref{eq:RnFAs} one uses that
\[
\pm(P^*Q+Q^*P)\le P^*P+Q^*Q
\]
with $Q:=\|R_{n}\|s^{-\frac12(1+\eps)}F(A_{s})$, $P^*:=\|R_{n}\|^{-1}s^{-n+\frac12(1+\eps)}R_{n}$. The $Q^{*}Q$ expression
is again subsumed into~$f_{1}$, whereas for $P^{*}P$ we obtain
\[
\|R_{n}\|^{-2}s^{-2n+1+\eps}R_{n}R_{n}^{*}\le s^{-2n+1+\eps} 
\]
This establishes our claim~\eqref{eq:reduc1}. 
By  assumption we can write $F=u^2$, $-F'=v^2$ with $u,v$ of order $<1$
whence $\|[B_1,u(A_s)]\|\le Cs^{-1}$ and $\|[B_1,v(A_s)]\|\le
Cs^{-1}$. Therefore, the right-hand side of~\eqref{eq:reduc1} is of
the form
\begin{align*}
&s^{-1} (F'(A_s)B_1 F(A_s) + F(A_s)B_1 F'(A_s)) \\
& = -s^{-1}(v^2(A_s) B_1 u^2(A_s) + u^2(A_s) B_1 v^2(A_s)) \\
&\simeq -2s^{-1} uv(A_s) B_1 uv(A_s)
\end{align*}
whence \eqref{eq:claim1} since $f'=-2(uv)^2$. The remainder that arises
here is of the $f_{1}$-form as can be seen by arguing as in~\eqref{eq:psi bd}. 

To invoke the Mourre
estimate~\eqref{eq:mourre}, we choose $G$ smooth and compactly
supported in~$I$ and with $bgG=gG=g$. Then
\[
G(H)B_1G(H)=G(H)\prefac [H,A]G(H)\ge \theta \hbar G^2(H)
\]
We now claim that
\begin{equation}\label{eq:final claim}
s^{-1} G(H) \eta B_1 \eta G(H) \simeq s^{-1} \eta G(H) B_1 G(H) \eta
\end{equation}
where $\eta(A_s):=(-f'(A_s))^{\frac12}$. It is clear that this claim will
finish the proof.  One has
\begin{align*}
&s^{-1} G(H) \eta B_1 \eta G(H)-  s^{-1} \eta G(H) B_1 G(H) \eta \\
&= s^{-1}(\eta G B_1 [\eta,G] + [G,\eta]B_1G\eta +
[G,\eta]B_1[\eta,G])
\end{align*}
and there is the expansion
\[
[G,\eta]=\sum_{k=1}^{n-1} \frac{ s^{-k}}{k!}
\eta^{(k)}(A_s)\,\hbar^{-k}\ad_A^k(G(H)) + s^{-n} R
\]
The expansion of $[\eta,G]$ is the adjoint of this one. To prove~\eqref{eq:final claim},
we observe that $\hbar^{-k}\ad_A^k(G(H))$ is uniformly bounded in~$k$, and we also gain~$s^{-k-1}\le s^{-2}$
with $k\ge1$ since~\eqref{eq:final claim} is of order~$s^{-1}$, and each step in the expansion gains
another~$s^{-1}$.  The other issues, such as the domination by $f_{1}$ etc.~are very similar to what 
we have done before, and we skip them. These details are identical to those in~\cite[Lemma~2.1]{HSS}, see in particular
the paragraph leading up to (2.10) in that reference. 
\end{proof}

The following is the semiclassical analogue in this context of the
key propagation estimate of Theorem~1.1 in~\cite{HSS}.

\begin{lemma}
\label{lem:prop_est} Let $0<\theta'<\theta$ and $g_I$ be as in Proposition~\ref{prop:HSS}.
Let $\chi^{\pm}$ be the indicator functions of $\R^{\pm}$,
respectively. Then for any $t\ge0$ and any $m\ge0$,
\[
\|\chi^-(A-a-\hbar\theta't) e^{-i\frac{Ht}{\hbar}} g_I(H)
\chi^+(A-a)\|\le C_m \,\lan \hbar t\ran^{-m} 
\]
uniformly in $\hbar, a$ where $C_m$ only depends
on $m$, $\theta',\theta$, and $g_I$.
\end{lemma}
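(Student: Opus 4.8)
The plan is to carry over the differential-inequality argument from \cite{HSS} (proof of Theorem~1.1), feeding in the Mourre bound through Lemma~\ref{lem:2.1} and keeping careful track of the powers of~$\hbar$.

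\textbf{Setup and reduction.} Fix $\theta''$ with $\theta'<\theta''<\theta$, let $\psi$ be a unit vector with $\chi^+(A-a)\psi=\psi$, and set $\psi_t:=e^{-i\frac{Ht}{\hbar}}g_I(H)\psi$; since $\psi_t\in\mathrm{Ran}\,g_I(H)$ we may, after slightly enlarging $g_I$ (replacing it by $\tilde g_I$ with $\tilde g_Ig_I=g_I$), assume $g_I(H)\psi_t=\psi_t$. Choose $f$ as in Lemma~\ref{lem:2.1}, normalized so that $f\ge1$ on $(-\infty,-\delta]$ for a small fixed $\delta>0$, and put
\[
s(t):=\hbar^{-1}\lan\hbar t\ran,\qquad A_t:=\lan\hbar t\ran^{-1}(A-a-\hbar\theta''t).
\]
Then $A_t$ is precisely the operator ``$A_s$'' of Lemma~\ref{lem:2.1} with the frozen center $a+\hbar\theta''t$ in place of $a$ and $s=s(t)\ge\hbar^{-1}\ge\hbar_0^{-1}$, so the scale parameter is large \emph{uniformly} in $t$ and in small~$\hbar$; this rescaling of time is the one substantive change from \cite{HSS}. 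On $\mathrm{Ran}\,\chi^-(A-a-\hbar\theta't)$ one has $A_t\le-(\theta''-\theta')\hbar t/\lan\hbar t\ran$, which is $\le-\delta$ once $\hbar t$ exceeds a fixed threshold; in that regime $\chi^-(A-a-\hbar\theta't)\le f(A_t)$ as functions of~$A$, while for smaller $t$ the asserted bound is trivial (the left side has norm $\le1$). Hence it suffices to show $\Phi(t):=\lan\psi_t,f(A_t)\psi_t\ran\les_m\lan\hbar t\ran^{-2m}$ for every $m\ge0$.

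\textbf{The differential inequality.} Using $\partial_tA_t=-s(t)^{-1}\bigl(\tfrac{\hbar t}{\lan\hbar t\ran}A_t+\theta''\bigr)$ and the Heisenberg identity,
\[
\dot\Phi(t)=\bigl\lan\psi_t,\tfrac{i}{\hbar}[H,f(A_t)]\psi_t\bigr\ran-s(t)^{-1}\bigl\lan\psi_t,f'(A_t)\bigl(\tfrac{\hbar t}{\lan\hbar t\ran}A_t+\theta''\bigr)\psi_t\bigr\ran .
\]
Insert $g_I(H)$ and apply Lemma~\ref{lem:2.1} (with center $a+\hbar\theta''t$, $s=s(t)$, and $\eps,n$ to be chosen) after dividing its conclusion by $\hbar$. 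Its principal term $s(t)^{-1}\theta f'(A_t)$ combines with the $-s(t)^{-1}\theta''f'(A_t)$ coming from the recentering into the net \emph{damping} term $s(t)^{-1}(\theta-\theta'')f'(A_t)\le0$ (recall $f'\le0$, as $f$ is nonincreasing); the term involving $A_tf'(A_t)$ is $\le0$ and is discarded. What remains on the right are the two error terms of Lemma~\ref{lem:2.1}, of respective size $C\hbar^{-1}s(t)^{-1-\eps}\lan f_1(A_t)\ran_{\psi_t}$ and $C\hbar^{-1}s(t)^{-(2n-1-\eps)}$, with $f_1$ again of order $p<4$ and vanishing on $[0,\infty)$. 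Because $s(t)=\hbar^{-1}\lan\hbar t\ran$, these equal $C\hbar^{\eps}\lan\hbar t\ran^{-1-\eps}\lan f_1(A_t)\ran_{\psi_t}$ and $C\hbar^{2n-2-\eps}\lan\hbar t\ran^{-(2n-1-\eps)}$; taking $\eps$ close to~$1$ and $n$ large keeps all powers of $\hbar$ nonnegative and makes the second term as rapidly decaying in $\lan\hbar t\ran$ as we like.

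\textbf{Closing by induction on $m$.} The case $m=0$ is trivial. Running in parallel the companion ``no backward propagation'' bound $\|\chi^-(A-a+\hbar ct)\,e^{-i\frac{Ht}{\hbar}}g_I(H)\chi^+(A-a)\|\les_m\lan\hbar t\ran^{-m}$ for a suitable $c>0$ — proved by the identical observable argument with $\chi^\pm$ interchanged — confines $\psi_t$ to $A\gtrsim a-\hbar ct$ up to errors that are $O(\lan\hbar t\ran^{-M})$ for every $M$. Since $f$ and $f_1$ vanish on $[0,\infty)$, $f(A_t)$ and $f_1(A_t)$ sample only $A\le a+\hbar\theta''t$, so together with the no backward bound the relevant part of $A_t$ lies in a \emph{fixed bounded} window; on that window a dyadic decomposition plus the inductive hypothesis (applied to $\chi^-(A-a-\hbar\theta't)$) gives $\lan f_1(A_t)\ran_{\psi_t}\les\lan\hbar t\ran^{-2(m-1)}$, and likewise $\Phi(t)\les\lan -f'(A_t)\ran_{\psi_t}+O(\lan\hbar t\ran^{-M})$ once $f$ is chosen with $-f'\ge c\lan\cdot\ran^{-1}f$. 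Feeding these into the differential inequality and integrating from the reference time $\sim\hbar^{-1}$ (where $\Phi=O(1)$, again by the no backward bound), the damping term upgrades the \emph{a priori} bound by one order, yielding $\Phi(t)\les\lan\hbar t\ran^{-2m}$.

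\textbf{Main obstacle.} The hard point is precisely this last step. The error $f_1(A_t)$ produced by the commutator expansion inside Lemma~\ref{lem:2.1} is, like $f$, unbounded at $-\infty$, so it cannot be dominated by $\|\psi_t\|^2$; its decay must be \emph{bootstrapped} from the previous induction step, and this forces the parallel ``no backward propagation'' estimate in order to localize $A_t$ to a bounded set before the inductive hypothesis can be applied. Interwoven with this is the $\hbar$-bookkeeping: every error term of Lemma~\ref{lem:2.1} costs a factor $\hbar^{-1}$ relative to the principal term, and those factors are recovered only because the effective parameter $s(t)=\hbar^{-1}\lan\hbar t\ran$ is uniformly $\gtrsim\hbar_0^{-1}$ — which is exactly why the time variable has to be rescaled as above. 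A minor further point is the replacement of the sharp cutoffs $\chi^\pm$ by smooth ones, handled by a routine density argument together with the commutator expansion~\eqref{eq:comm_exp}.
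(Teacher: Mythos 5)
There is a genuine gap, and it originates in your very first move: substituting the time\-/dependent scale $s(t)=\hbar^{-1}\la \hbar t\ra$ into the propagation observable. The paper (following \cite{HSS}) keeps $s\ge1$ as a \emph{free} parameter, works with $A_{s,t}=(\hbar s)^{-1}(A-a-\hbar\theta t)$, proves $\la \phi_s(t)\ra_t\le C_m(\hbar s)^{-m}$ for all $0\le t\le s$, and only at the very end sets $s=t$. This matters because the decay is extracted from the error terms of Lemma~\ref{lem:2.1}: the term $\hbar^{-1}s^{-1-\eps}\la f_1(A_{s,u})\ra_u$, integrated over a time interval of length $\le s$, yields $\hbar^{-1}s^{-\eps}\sup_u\la f_1\ra_u$, and since $F$ is of order $0$ both $f=F^2$ and $f_1$ are \emph{bounded}, so the trivial bound $\la f_1\ra_u\le C$ starts a bootstrap in which each pass gains a factor $(\hbar s)^{-1}$ (the Mourre term cancels exactly against $\partial_t f(A_{s,t})$ and plays no further role). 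In your version the same error term becomes $C\hbar^{\eps}\la\hbar u\ra^{-1-\eps}\la f_1\ra_u$, and $\int_0^t\la\hbar u\ra^{-1-\eps}\,du=O(\hbar^{-1})$ carries \emph{no decay in $t$}; your own base case therefore produces only $\Phi(t)=O(1)$, and all the decay must come from the damping term.

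That replacement mechanism, as sketched, does not close. From $\dot\Phi\le -(\theta-\theta'')s(t)^{-1}\la -f'(A_t)\ra_t+\mathrm{err}(t)$ you obtain an \emph{integrated} propagation bound $\int s(t)^{-1}\la -f'\ra_t\,dt<\infty$; converting it to pointwise decay via $-f'\ge c\la\cdot\ra^{-1}f$ and a mean\-/value argument on a dyadic block $[T,2T]$ uses $\int_T^{2T}s(t)^{-1}dt=\int_T^{2T}\hbar\la\hbar t\ra^{-1}dt\simeq\log 2$, which recovers at best the \emph{same} power $\la\hbar T\ra^{-2(m-1)}$ as your inductive hypothesis — no gain per step. (Integrating the error tail with the hypothesis gives $\la\hbar T\ra^{-2m+1+\eps}$, also short of the target $\la\hbar T\ra^{-2m}$, and in the wrong direction: it bounds $\Phi(\infty)-\Phi(T)$, not $\Phi(t)$ from above.) The auxiliary ``no backward propagation'' estimate is a further unproved ingredient: it is \emph{not} ``the identical argument with $\chi^\pm$ interchanged'' — it is a maximal\-/velocity bound resting on the upper commutator bounds \eqref{eq:comm_ass} rather than on the Mourre inequality \eqref{eq:mourre}, and it is only needed because your choice $-f'\gtrsim\la\cdot\ra^{-1}f$ forces $f$ and $f_1$ to be unbounded at $-\infty$. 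All of this machinery disappears if you keep $s$ fixed, take $F$ of order $0$, and set $s=t$ only after the bootstrap in $m$ is complete.
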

\begin{proof}
Define for any $s\ge1$ 
\[
A_{s,t}:= (\hbar s)^{-1}(A-a-\hbar \theta t)
\]
Choose $F\ge0$ smooth, nonincreasing of order $0$ and
$F(x)=0$ for $x\ge0$. We shall prove the estimate 
\begin{equation}
  \label{eq:Ast}
  \|F(A_{s,t}) e^{-i\frac{tH}{\hbar}} g_I(H) \chi^+(A-a)\|\le C\,
   \la \hbar  t\ra^{-m}
\end{equation}
To see that this implies the lemma, set $s=t$ in~\eqref{eq:Ast} and note that if $F=1$ on $(-\infty,-\delta]$ with $\delta>0$ small, then 
\[
F(A_{t,t})\chi^-(A-a-\hbar\theta't)  = \chi^-(A-a-\hbar\theta't),\qquad \theta':=\theta-\delta
\]
Define
\begin{align*}
   \phi_s(t) &:= g_I(H) f(A_{s,t}) g_I(H),\quad f=F^2 \\
   \psi_t &:= e^{-i\frac{tH}{\hbar}}\chi^+(A-a)\phi
\end{align*}
where $\phi$ is an arbitrary unit vector. Then 
\eqref{eq:Ast} will follow from the claim:  for every positive integer~$m$, 
\begin{equation}
  \label{eq:boot}
   \la \phi_s(t)\ra_t := \la \psi_t,\phi_s(t)\psi_t\ra \le C\, \hbar^{-m} s^{-m}
\end{equation}
uniformly in $\hbar, a$ and $0\le t\le s$, $1\le s$. Note that $ \la \phi_s(t)\ra_t\ge0$ by construction.  Fix some~$m$. Differentiating
yields
\begin{align*}
  \del_t\la \phi_s(t)\ra_t &= \la \psi_t,D_t\phi_s(t)\psi_t\ra = \lan D_{t}\phi_{s}\ran_{t} \\
  D_t\phi_s(t) &= \prefac [H,\phi_s(t)] +\del_t \phi_s(t) \\
  &= g_I(H) \prefac[H,f(A_{s,t})]g_I(H) - s^{-1} \theta
  g_I(H)f'(A_{s,t})g_I(H)
\end{align*}
By \eqref{eq:comm_exp}, for any $n\ge1$,
\[
0\le \la \phi_s(0)\ra_0 \le C_n\,  s^{-2n}
\]
The point here is that $f^{(k)}(A_{s,0})\chi^{+}(A-a)=0$ for all $k\ge0$ so that only the
remainder in the commutator expansion contributes. 
Next, apply Lemma~\ref{lem:2.1} with $\eps=1$ to conclude that
\begin{equation}\label{eq:Dt phi bd}
D_t \phi_s(t) \le \hbar^{-1} \big[ s^{-2} g_I(H) f_1(A_{s,t}) g_I(H) + s^{-2(n-1)}
g_I(H)^2 \big]
\end{equation}
where $f_1$ satisfies the same hypotheses as $f$; in particular, it is of order zero (one can choose $F$
above so that the properties of $f=F^2$ required by
Lemma~\ref{lem:2.1} are valid). Moreover, we fixed $n$ much larger than $m$. 
Integrating this bound 
in $0\le t\le s$ therefore implies that
\begin{equation}
\la \phi_s(t)\ra_t \le C ( s^{-2n} + \hbar^{-1}s^{-1}) \label{eq:step0}
\end{equation}
which implies~\eqref{eq:boot} with $m=1$. 
 The idea is now to
bootstrap using~\eqref{eq:Dt phi bd}. 
Indeed,  we can apply~\eqref{eq:step0}
to~$f_1$ to conclude that~\eqref{eq:boot} holds with $m=2$.
Iterating this procedure concludes the proof.
\end{proof}

\begin{proof}[Proof of Proposition~\ref{prop:HSS}]
This follows from Lemma~\ref{lem:prop_est} as follows. Let $t\ge0$. 
First, write 
\[
\la A\ra^{-\alpha} = \la A\ra^{-\alpha} \chi^{+}(A+ \frac12 \hbar \theta t) +
\la A\ra^{-\alpha} \chi^{-}(A+ \frac12 \hbar \theta t) 
\]
The second term satisfies 
\[
\| \la A\ra^{-\alpha} \chi^{-}(A+ \frac12 \hbar \theta t)\|  \le C\, \hbar^{-\alpha} t^{-\alpha}
\]
in the sense of operator norms. The first term we subject to the evolution: with $a=-\frac12 \hbar \theta t$, 
\begin{multline}\nn
e^{-i\frac{Ht}{\hbar}} g_I(H)
\chi^+(A-a) = \chi^-(A-a-\frac34\hbar\theta t) e^{-i\frac{Ht}{\hbar}} g_I(H)
\chi^+(A-a) \\
+  \chi^+(A-a-\frac34\hbar\theta t) e^{-i\frac{Ht}{\hbar}} g_I(H)
\chi^+(A-a)
\end{multline}
The second term here satisfies 
\[
\| \la A\ra^{-\alpha} \chi^+(A-a-\frac34\hbar\theta t) e^{-i\frac{Ht}{\hbar}} g_I(H)
\chi^+(A-a)
\|\le C\, \hbar^{-\alpha} t^{-\alpha} 
\]
while the first satisfies the same bound  without the weights  $\la A\ra^{-\alpha}$ by 
Lemma~\ref{lem:prop_est} which concludes the proof for positive times. For negative times one passes
to the adjoints. 
\end{proof}

\end{document}